\documentclass{amsart}
%
\usepackage{amsmath}
\usepackage{amssymb}
\usepackage{amsthm}
\usepackage{dsfont}
\usepackage{lineno}
\usepackage{subfigure}
\usepackage{graphicx}
\usepackage{multirow}
\usepackage{color}
\usepackage{xspace}
\usepackage{pdfsync}
\usepackage{hyperref} 
\usepackage{algorithmicx}
\usepackage{algpseudocode}
\usepackage{algorithm}
\usepackage{mathtools}
\usepackage{enumitem}
\graphicspath{{../Figures/}}


\DeclareMathOperator*{\argmin}{argmin}
\DeclareMathOperator*{\argmax}{argmax}

\newcommand{\bq}{\begin{equation}}
\newcommand{\eq}{\end{equation}}
\newcommand{\R}{\mathbb{R}}
\newcommand{\abs}[1]{\left\vert#1\right\vert}

\newcommand{\G}{\mathcal{G}}

\newcommand{\Dt}{\mathcal{D}}

\newcommand{\Sf}{\mathcal{S}}
\newcommand{\MA}{Monge-Amp\`ere\xspace}

\algnewcommand{\LineComment}[1]{\State \(\triangleright\) #1}

\newtheorem{theorem}{Theorem}
\newtheorem{thm}{Theorem}

\theoremstyle{lemma}

\newtheorem{lem}[theorem]{Lemma}

\newtheorem{cor}[theorem]{Corollary}
\newtheorem{definition}[theorem]{Definition}

\newtheorem{rem}[theorem]{Remark}
\newtheorem{hypothesis}[theorem]{Hypothesis}

\theoremstyle{remark}
\newtheorem{examp}{Example}

\makeatletter
\newcommand\appendix@section[1]{%
\refstepcounter{section}%
\orig@section*{Appendix \@Alph\c@section: #1}%
}
\let\orig@section\section
\g@addto@macro\appendix{\let\section\appendix@section}
\makeatother



\begin{document}

\title[Approximation of surfaces of prescribed Gaussian curvature]{Convergent approximation of non-continuous surfaces of prescribed Gaussian curvature}

\author{Brittany D. Froese}
\address{Department of Mathematical Sciences, New Jersey Institute of Technology, University Heights, Newark, NJ 07102}
\email{bdfroese@njit.edu}
\thanks{This work was partially supported by NSF DMS-1619807}

\begin{abstract}
We consider the numerical approximation of surfaces of prescribed Gaussian curvature via the solution of a fully nonlinear partial differential equation of Monge-Amp\`ere type.  These surfaces need not be continuous up to the boundary of the domain and the Dirichlet boundary condition must be interpreted in a weak sense.  As a consequence, sub-solutions do not always lie below super-solutions, standard comparison principles fail, and existing convergence theorems break down.  By relying on a geometric interpretation of weak solutions, we prove a relaxed comparison principle that applies only in the interior of the domain.  We provide a general framework for proving existence and stability results for consistent, monotone finite difference approximations and modify the Barles-Souganidis convergence framework to show convergence in the interior of the domain.  We describe a convergent scheme for the prescribed Gaussian curvature equation and present several challenging examples to validate these results.
\end{abstract}

\date{\today}    
\maketitle

\section{Introduction}\label{sec:intro}

The Gaussian curvature of a hypersurface is given by the product of the principle curvatures of the surface. When a hypersurface in $\R^{n+1}$ can be locally characterised as the graph of a $C^2$ function $(x,u(x))$, the Gaussian curvature at the point $x$ is given by
\bq\label{eq:curvature} \kappa(x) = \frac{\det(D^2u(x))}{(1+\abs{\nabla u(x)}^2)^{(n+2)/2}}. \eq

This characterisation is closely related to the Darboux equation, which can be used to describe the isometric embedding of a Riemannian manifold into $\R^3$~\cite{Blocki,Han_isometric}.  Curvatures of this type also arise in problems involving surface evolution~\cite{OsherSethian},  surface fairing~\cite{ElseyEsedoglu}, image processing~\cite{Sapiro}, and optimal transportation~\cite{oliker2007embedding}.

A widely-studied problem is to find a convex surface $u(x)$ on a convex domain $\Omega\in\R^n$ whose Gaussian curvature is equal to a prescribed function $\kappa:\Omega\to[0,\infty)$~\cite{TrudingerWang_MAReview}.  The Dirichlet problem for the equation of prescribed Gaussian curvature is given by the following fully nonlinear elliptic partial differential equation (PDE).
\bq\label{eq:MA}
\begin{cases}
\det(D^2u(x)) = \kappa(x)(1+\abs{\nabla u(x)}^2)^{(n+2)/2}, & x\in\Omega\\
u(x) = g(x), & x\in\partial\Omega\\
u \text{ is convex.}
\end{cases}
\eq

This belongs to the class of \MA type equations,
\bq\label{eq:MAgen} \det(D^2u(x)) = f(x,u(x),\nabla u(x)). \eq
These have been studied extensively and a wealth of results are available relating to well-posedness, regularity, and applications~\cite{BardiMannucci,CafNirSpruck,Lions_Remarks,TrudingerWang_MAReview,Urbas_MA}.  In general, equations of this type do not possess classical solutions, and it is necessary to rely on some notion of weak solution: either generalised~\cite{Bakelman} or viscosity solutions~\cite{IshiiLions}.  In fact, for the prescribed Gaussian curvature equation, it is easy to construct examples where even the Dirichlet boundary data cannot be enforced in a classical sense.  Instead, the desired weak solution is the supremum of all solutions that lie below the given boundary data.

Recently, the numerical solution of particular \MA equations has received a great deal of attention, with several new methods being proposed.  An early method by Oliker used a geometric argument to construct coarse approximations that converge to the generalised solution of a simple \MA equation~\cite{olikerprussner88}.  Many other recent methods for \MA equations either lack any proof of convergence or require additional regularity assumptions~\cite{BrennerNeilanMA2D,DGnum2006,FengNeilan,DelzannoGrid,LoeperMA,Saumier,SulmanWilliamsRussell}. The powerful Barles-Souganidis convergence framework has recently inspired the development of several monotone (elliptic) approximation schemes~\cite{benamou2014monotone,FO_MATheory,mirebeau2015MA}.  However, complete proofs of convergence to weak solutions are typically not available due to the difficulty of handling boundary conditions.  In particular, the Barles-Souganidis approach requires the PDE, with boundary conditions interpreted in a weak sense, to satisfy a very strong form of a comparison principle.  This comparison principle is often difficult to verify, and in many problems it is actually demonstrably false.
For the non-classical Dirichlet problem considered in this article, it is easy to demonstrate that the form of the comparison principle required by currently available convergence proofs does not hold.

\subsection{Contributions of this work}
The primary goal of this article is to describe a robust framework for approximating generalised surfaces of prescribed Gaussian curvature, with Dirichlet boundary conditions interpreted in a weak sense.   
As a secondary goal, we provide very general results on solution existence and stability for elliptic approximation schemes, as well as results on the convergence of grid functions, which addresses a gap in the literature that has conventionally been handled on a case-by-case basis or simply assumed as a hypothesis.  Our hope is that these contributions will serve as a foundation for producing convergent schemes for more general boundary value problems for fully nonlinear equations.

To accomplish these goals, we address the following specific challenges.

\begin{enumerate}
\item The equation is augmented by an additional condition that the solution be convex.  Thus it is necessary to develop numerical methods that also enforce this condition in an appropriate (approximate) sense.  We rigorously establish that the constrained PDE can be replaced by an equivalent unconstrained PDE that automatically selects the convex solution.  This reduces the problem to the more tractable task of building a convergent numerical method for an unconstrained PDE.
\item The theory of viscosity solutions provides a powerful framework for proving the convergence of numerical methods for fully nonlinear elliptic equations.  In order to make use of this theory, we need to show that generalised surfaces of prescribed Gaussian curvature can be characterised as viscosity solutions of a well-posed PDE.  While it is known how to define viscosity solutions of the Gaussian curvature equation, general uniqueness results are not available for non-continuous solutions.  We use a geometric interpretation of the convex solutions of the PDE to prove new results on the existence and uniqueness of viscosity solutions, as well as their equivalence to generalised surfaces of prescribed Gaussian curvature.
\item Existing convergence proofs rely on a strong form of the comparison principle~\cite{BSnum}, which is actually false in our setting.  However, we prove that this equation satisfies a weaker form of the comparison principle that applies only in the interior of the domain.  We then modify the usual Barles-Souganidis proof to demonstrate convergence of suitable approximation schemes in the interior of the domain (as well as convergence in $L^p$ for $0 < p < \infty$).
\item For the modified Barles-Souganidis framework to apply, it is important that schemes have a solution and that solutions are bounded in $L^\infty$.  We describe a general approach to proving existence and stability of elliptic approximation schemes.  These results are valid under the mild condition that it is possible to construct strict classical sub- and super-solutions of the underlying PDE.  In particular, we show that this is possible for the equation of prescribed Gaussian curvature.
\item With a general convergence framework in place, we turn to the construction of schemes that satisfy the necessary conditions.  For interior points, it is possible to make use of existing numerical methods~\cite{FroeseMeshfreeEigs,FO_MATheory}.  However, when the Dirichlet boundary condition is interpreted in a weak sense, the actual boundary values of a solution are not known \emph{a priori}.  Remarkably, we demonstrate that it is sufficient to enforce the boundary condition in a strong sense.  We show that this satisfies the necessary consistency condition and interior convergence is guaranteed, though a boundary layer is possible when solutions are discontinuous.
\end{enumerate}

We conclude this article by implementing a provably convergent meshfree finite difference method and providing results for several challenging examples.

\section{Weak Solutions}\label{sec:weak}


We begin by reviewing basic notions of weak solution: (1) the generalised solution, which we want to construct and (2) the viscosity solution, which is more amenable to numerical analysis.  One of the goals of this work is to show that these weak solutions are equivalent, so that numerical convergence results for viscosity solutions will also apply to generalised surfaces of prescribed Gaussian curvature.  This equivalence is established in Theorems~\ref{thm:exist} and~\ref{thm:uniqueness}.

\subsection{Generalised solutions}\label{sec:generalised}
The existence of a convex surface with Gaussian curvature $\kappa(x)$ is not guaranteed for arbitrary functions $\kappa \geq 0$.  In particular, the total curvature must be bounded by the volume of the unit ball in $\R^{n}$.
\begin{lem}[Necessary condition for existence~\cite{Bakelman}]\label{lem:condition}
A necessary condition for the existence of a solution of~\eqref{eq:MA} is for the prescribed curvature to satisfy
\bq\label{eq:condition}
\int_\Omega\kappa(x)\,dx  \leq \int_{\R^n} (1+\abs{p}^2)^{-(n+2)/2}.
\eq
\end{lem}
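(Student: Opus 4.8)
The plan is to use the classical geometric interpretation of the \MA measure via the Gauss map (normal map), which underlies the notion of generalised solution in the sense of Bakelman. For a convex function $u$ on $\Omega$, the normal image of a subset $E\subseteq\Omega$ is the set $\chi_u(E)\subseteq\R^n$ of slopes $p$ of supporting hyperplanes of $u$ at points of $E$. The associated Monge-Amp\`ere measure is $\mu_u(E) = |\chi_u(E)|$, the Lebesgue measure of the normal image, and when $u\in C^2$ this measure has density $\det(D^2u(x))$ with respect to $dx$. Reading the PDE in~\eqref{eq:MA} as an equation for this measure, a solution must satisfy $\mu_u(\Omega) = \int_\Omega \kappa(x)(1+|\nabla u(x)|^2)^{(n+2)/2}\,dx$ (for the $C^2$ case; in general the left side is defined via the normal map and the density condition is the definition of a generalised solution).

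First I would observe that, since $u$ is convex on $\Omega$ and the problem is posed so that the solution lies below the boundary data $g$, the normal image $\chi_u(\Omega)$ is a subset of $\R^n$ — no more. Hence $\mu_u(\Omega) = |\chi_u(\Omega)| \leq |\R^n|$ is not by itself useful; the key is instead to bound the \emph{curvature integral} $\int_\Omega \kappa\,dx$ directly. The second step is the change of variables: on the set where $u$ is twice differentiable (which is a.e. by Alexandrov's theorem), the normal map $x\mapsto \nabla u(x)$ has Jacobian $\det(D^2u(x))$, so
\bq\label{eq:COV}
\int_\Omega \frac{\det(D^2u(x))}{(1+|\nabla u(x)|^2)^{(n+2)/2}}\,dx = \int_{\nabla u(\Omega)} \frac{1}{(1+|p|^2)^{(n+2)/2}}\,dp.
\eq
Using the PDE to replace the numerator $\det(D^2u(x))$ by $\kappa(x)(1+|\nabla u(x)|^2)^{(n+2)/2}$, the left-hand integrand collapses to exactly $\kappa(x)$, giving $\int_\Omega \kappa(x)\,dx = \int_{\nabla u(\Omega)} (1+|p|^2)^{-(n+2)/2}\,dp \leq \int_{\R^n}(1+|p|^2)^{-(n+2)/2}\,dp$, which is~\eqref{eq:condition}. (The quantity $\int_{\R^n}(1+|p|^2)^{-(n+2)/2}\,dp$ is finite — it equals the volume $\omega_n$ of the unit ball in $\R^n$, by the substitution $p = \tan\theta\cdot e$ parametrizing the upper unit hemisphere in $\R^{n+1}$ — but finiteness is all that is needed for the statement.)

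The main obstacle is the rigorous justification of the change of variables~\eqref{eq:COV} for a merely convex (not $C^2$) function: one must invoke Alexandrov's theorem to get second differentiability a.e., the area formula for Lipschitz maps on compact subsets, and the fact that the normal map may be multivalued only on a set whose image has measure zero (so that $\nabla u(\Omega)$ and $\chi_u(\Omega)$ agree up to null sets). For a generalised solution, the cleanest route is to take the defining property — that $|\chi_u(E)| = \int_E \kappa(x)(1+|\nabla u(x)|^2)^{(n+2)/2}\,dx$ for all Borel $E$ — as the starting point, set $E=\Omega$, bound $|\chi_u(\Omega)|\le|\R^n|$ trivially, and then run the same change of variables \emph{inside} the curvature weight to pass to~\eqref{eq:condition}; this is exactly the computation in~\cite{Bakelman}, and I would cite it rather than reproduce the measure-theoretic details. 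A small additional point to address is that $\nabla u(\Omega)$ (or $\chi_u(\Omega)$) need not be all of $\R^n$, so the inequality in the last step is genuinely an inequality, consistent with the ``necessary, not sufficient'' phrasing of the lemma.
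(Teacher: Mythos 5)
The paper offers no proof of this lemma---it is quoted directly from Bakelman---but your argument is correct and is the standard (and surely intended) one: in the paper's own framework it amounts to taking $E=\Omega$ in Definition~\ref{def:generalised} and observing that $\partial u(\Omega)\subseteq\R^n$, which is exactly what your change-of-variables identity establishes (together with its reduction to the classical PDE in the $C^2$ case). The only blemish is in your final paragraph, where the intermediate bound $|\chi_u(\Omega)|\le|\R^n|=\infty$ is vacuous, as you yourself note earlier; it can simply be dropped, since the weighted change of variables already delivers~\eqref{eq:condition}.
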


This condition is not sufficient to guarantee that a classical $C^2$ solution exists~\cite{TrudUrbas_Gauss}.  Instead, some notion of weak solution is needed to make sense of solutions of the \MA equation.  One approach is the generalised solution, which defines weak solutions in terms of the measure generated by the subgradient of the solution.  

\begin{definition}[Generalised solution]\label{def:generalised}
A convex function $u:\Omega\to\R$ is a generalised solution of the prescribed Gaussian curvature equation if for every measurable set $E\subset\Omega$
\[ \int_{\partial u(E)}(1+\abs{p}^2)^{-(n+2)/2}\,dp = \int_E \kappa(x)\,dx \]
where $\partial u$ is the subgradient of $u$.
\end{definition}



It may also be impossible to enforce the Dirichlet boundary data in a classical sense.  Instead, Bakelman~\cite{Bakelman} described a weaker notion of Dirichlet boundary conditions for this problem.
\begin{definition}[Weak formulation of boundary conditions]\label{def:weakDirichlet}
A convex function $u $ satisfies the Dirichlet problem~\eqref{eq:MA} if $u$ satisfies the \MA PDE in a generalised sense, 
\bq\label{eq:weakBC}
\limsup\limits_{x\to y}u(x) \leq g(y), \quad y \in\partial\Omega,
\eq
and if $v$ is any other generalised solution of the \MA PDE that also satisfies~\eqref{eq:weakBC} then $v\leq u$ on $\Omega$.
\end{definition}

This weaker notion of Dirichlet boundary conditions leads to an existence result for the problem of prescribed Gaussian curvature.
For clarity, we first state our hypotheses on the data, which will be used throughout this paper.
\begin{hypothesis}[Conditions on data]\label{hyp}
\end{hypothesis}

\begin{enumerate}
\item[(H1)] $\Omega$ is a uniformly convex, bounded, open domain.
\item[(H2)] The boundary data $g\in C^0(\partial\Omega)$.
\item[(H3)] The curvature $\kappa:\bar{\Omega}\to [0,\infty)$ is continuous and bounded.
\item[(H4)] The data satisfies the strict compatibility condition
\[\int_\Omega\kappa(x)\,dx  < \int_{\R^n} (1+\abs{p}^2)^{-(n+2)/2}\,dp.\]
\end{enumerate}

\begin{lem}[Existence of generalised solution~{\cite[Theorem~1]{Bakelman}}]\label{lem:exist}
If Hypothesis~\ref{hyp} holds, the \MA equation~\eqref{eq:MA} has a unique generalised solution that satisfies the Dirichlet boundary conditions in the weak sense.
\end{lem}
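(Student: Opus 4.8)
Although this is Bakelman's theorem~\cite{Bakelman}, here is the strategy I would follow. The plan is to construct the desired solution as the upper envelope of generalised subsolutions lying below the boundary data, in the spirit of the Perron method. Call a convex function $v:\Omega\to\R$ a generalised subsolution if $\omega(v,E):=\int_{\partial v(E)}(1+\abs{p}^2)^{-(n+2)/2}\,dp \geq \int_E\kappa(x)\,dx$ for every measurable $E\subset\Omega$, and let $\F$ be the set of generalised subsolutions satisfying $\limsup_{x\to y}v(x)\leq g(y)$ for all $y\in\partial\Omega$. I would set $u=\sup_{v\in\F}v$ and show this function is the generalised solution satisfying the weak Dirichlet condition.

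The crux is a priori control. An upper bound is easy: any $v\in\F$ is convex and real-valued, hence continuous in $\Omega$, so its supremum is attained in the limit at $\partial\Omega$ and $v\leq\max_{\partial\Omega}g$. For a lower bound I would use an Aleksandrov-type estimate: if $v$ dipped to depth $-D$ at an interior point $x_0$, the downward cone joining $(x_0,-D)$ to the graph of $g$ over $\partial\Omega$ lies below $v$, and the weighted measure of its subgradient image tends to the full mass $\int_{\R^n}(1+\abs{p}^2)^{-(n+2)/2}\,dp$ as $D\to\infty$; since $\partial v(\Omega)$ contains this image, the \emph{strict} inequality (H4) caps $D$. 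Running the same computation in reverse shows $\F$ is nonempty, since a sufficiently shallow cone is a subsolution below $g$. Hence $u$ is a well-defined finite convex function. The uniform convexity of $\Omega$ in (H1), with the continuity of $g$, will also furnish near each $y\in\partial\Omega$ an upper barrier: a convex supersolution $w_{y,\varepsilon}$ with $\omega(w_{y,\varepsilon},\cdot)\leq\kappa\,dx$, $w_{y,\varepsilon}\geq g$ on $\partial\Omega$, and $w_{y,\varepsilon}(y)\leq g(y)+\varepsilon$, obtained by placing a large sphere exterior to $\Omega$ tangent at $y$.

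A standard Perron argument then shows $\omega(u,\cdot)=\kappa\,dx$: the supremum of generalised subsolutions is again a subsolution, so $\omega(u,\cdot)\geq\kappa\,dx$; and if $u$ failed to be a supersolution on some small ball $B\subset\Omega$, I would replace $u$ on $B$ by the solution of the Dirichlet problem on $B$ with the boundary values of $u$ there, producing a strictly larger member of $\F$ and contradicting maximality. The barriers, through the comparison principle for generalised solutions, dominate every $v\in\F$ near $\partial\Omega$, which forces $\limsup_{x\to y}u(x)\leq g(y)$; and $v\leq u$ for every competitor $v$ is immediate from the definition of $u$. This gives both the maximality clause of Definition~\ref{def:weakDirichlet} and uniqueness. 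One should reconcile this envelope with solutions obtained by approximating $(\kappa,g,\Omega)$ by regular data, using Blaschke selection and weak-$*$ continuity of the curvature measure; this also supplies the local solvability on balls used above.

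The main obstacle is the boundary. Because solutions need not be continuous up to $\partial\Omega$, one cannot argue via continuous barriers that attain $g$, and essentially all the difficulty concentrates in (i) the Aleksandrov-type lower bound, whose validity hinges on the strict inequality in (H4) — with equality the total curvature could be realised by letting the solution escape to $-\infty$, and no lower bound would hold — and (ii) the one-sided barriers, which exploit the uniform convexity of $\Omega$ and the continuity of $g$. The Perron lifting step also requires local solvability of the curvature equation on balls, which must be imported from the regular theory.
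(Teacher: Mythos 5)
The paper does not prove this result; it cites it verbatim as Bakelman's Theorem~1 and uses it as a black box, so there is no in-paper argument to compare against. That said, your Perron-envelope sketch is a plausible reconstruction of the classical proof and puts the weight on the right hypotheses: the strict inequality in (H4) is what caps the Aleksandrov-type depth bound and keeps the envelope finite, and uniform convexity of $\Omega$ together with continuity of $g$ is what supplies the one-sided boundary barriers forcing $\limsup_{x\to y}u(x)\le g(y)$.

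One substantive slip in the depth-bound step: the cone $K$ joining the vertex $(x_0,-D)$ to the graph of $g$ over $\partial\Omega$ lies \emph{above} $v$, not below. Along any segment from $x_0$ to $y\in\partial\Omega$, convexity of $v$ and $\limsup v\le g$ give $v((1-t)x_0+ty)\le (1-t)(-D)+tg(y)=K((1-t)x_0+ty)$. With $K\ge v$ and $K(x_0)=v(x_0)$, the inclusion $\partial K(x_0)\subset\partial v(\Omega)$ is no longer the trivial ``supporting plane at the vertex'' consequence one would get from $K\le v$; it requires the sliding-plane (normal-mapping) argument, which in turn needs precisely the kind of boundary control that the paper later isolates in Lemma~\ref{lem:subgradOrder}. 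You correctly flag the two remaining imports from the classical theory---local solvability on balls for the Perron lift, and a comparison principle for generalised solutions for the barriers---so the sketch is not self-contained; in a from-scratch proof those have to be established first (by smooth approximation and Blaschke compactness, as you suggest, or directly from the regular theory), and the order of dependencies must be checked to avoid circularity. The skeleton is otherwise aligned with Bakelman's strategy.
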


\begin{rem}
The generalised solution is convex and therefore continuous in the interior $\Omega$, but need not be continuous up to the boundary even if the Dirichlet data~$g$ is continuous.  For example, when $\kappa(x)=0$, the solution is the convex envelope of the boundary data~\cite{ObermanCE}, which need not be continuous up to the boundary~\cite{KruskalCE}.
\end{rem}

\subsection{Viscosity solutions}\label{sec:visc}
We will make use of an alternative (equivalent) form of weak solution, the viscosity solution, which will inform the convergent approximation schemes we construct.

The \MA equation belongs to a class of PDEs known as degenerate elliptic equations, which take the form
\[ F(x,u(x),\nabla u(x),D^2u(x)) = 0. \]

\begin{definition}[Degenerate elliptic]\label{def:elliptic}
The operator
$F:\Omega\times\R\times\R^n\times\Sf^n\to\R$
is \emph{degenerate elliptic} if 
\[ F(x,u,p,X) \leq F(x,v,p,Y) \]
whenever $u \leq v$ and $X \geq Y$.
\end{definition}

The notion of the viscosity solution has become a very powerful tool for analysing fully nonlinear degenerate elliptic PDEs~\cite{CIL}.  The definition relies on a maximum principle argument that moves derivatives onto smooth test functions.  The usual definition of a viscosity solution must be modified slightly for the \MA equation, which is elliptic only the space of convex functions.  This requires a slight alteration to the test functions that must be checked.  


For brevity, we introduce the notation
\bq\label{eq:grad}
R(p) = (1+\abs{p}^2)^{(n+2)/2}.
\eq
Then we can denote the operator $F:{\Omega}\times\R\times\R^n\times\Sf^n\to\R$ corresponding to equation~\eqref{eq:MA} by
\bq\label{eq:MAop} F(x,z,p,X) = -\det(X) + \kappa(x)R(p).\eq

Convex viscosity solutions of the equation 
\bq\label{eq:PDE} F(x,u(x),\nabla u(x),D^2 u(x))  = 0 \eq
are defined as follows.
\begin{definition}[Viscosity sub-solution]\label{def:subsolution}
A convex upper semi-continuous function $u$ is a \emph{viscosity sub-solution} of~\eqref{eq:PDE} in $\Omega$ if for every $\phi\in C^2({\Omega})$, whenever $u-\phi$ has a local maximum at $x \in {\Omega}$, then
\[ 
F(x,u(x),\nabla \phi(x),D^2\phi(x)) \leq   0 .
\]
\end{definition}
\begin{definition}[Viscosity super-solution]\label{def:supersolution}
A convex lower semi-continuous function $u$ is a \emph{viscosity super-solution} of~\eqref{eq:PDE} in $\Omega$ if for every $\phi\in C^2({\Omega})$, whenever $u-\phi$ has a local minimum  at $x \in {\Omega}$ and $D^2\phi(x) \geq 0$, then
\[ 
F(x,u(x),\nabla \phi(x),D^2\phi(x)) \geq   0 .
\]
\end{definition}

\begin{rem}
In the definition of the viscosity super-solution, the space of test functions is restricted to smooth, convex functions.  In the definition of the sub-solution, it is not necessary to require test functions to be convex, although local convexity near $x$ follows automatically from the fact that $u-\phi$ has a maximum.  We choose to use different test function spaces for the sub- and super-solutions here because this will allow us to maintain the same test function spaces when we consider a modified version of the operator~\eqref{eq:MAconvex}.
\end{rem}


\begin{definition}[Viscosity solution]\label{def:viscosity1}
A convex function $u:\Omega\to\R$ is a \emph{viscosity solution} of~\eqref{eq:PDE} if it is both a sub-solution and a super-solution.
\end{definition}

For the moment we ignore the issue of boundary conditions and focus on the behaviour of solutions in the interior of the domain.
On open sets, viscosity solutions are equivalent to generalised solutions.  

\begin{thm}[Equivalence of weak solutions]\label{thm:equivalence1}
Let $\kappa:\Omega\to[0,\infty)$ be continuous and bounded.  Then a convex function $u$ is a viscosity solution of the prescribed Gaussian curvature equation~\eqref{eq:curvature} if and only if it is a generalised solution.
\end{thm}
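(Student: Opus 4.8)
The plan is to prove both implications by exploiting the well-known bijective correspondence between the generalised-solution condition and the Monge–Amp\`ere measure $\mu_u(E) = \int_{\partial u(E)} R(p)^{-1}\,dp$, together with the standard stability/localization machinery for viscosity solutions. The key technical tool will be approximation: any convex function $u$ can be approximated locally uniformly on compact subsets of $\Omega$ by smooth strictly convex functions $u_\varepsilon$ (e.g. by mollification), and both notions of solution are stable under such approximation, so it suffices to transfer the defining inequalities through the limit. I would also record up front the elementary fact that for a $C^2$ strictly convex function, the pointwise identity $\det D^2 u(x) = \kappa(x) R(\nabla u(x))$ is equivalent to the measure identity $\mu_u = \kappa\,dx$, since the change-of-variables formula gives $\mu_u(E) = \int_E \det D^2 u(x)/R(\nabla u(x))\,dx$; this handles the smooth case and anchors both directions.

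\emph{Generalised $\Rightarrow$ viscosity.} Suppose $u$ is a generalised solution and $\phi \in C^2(\Omega)$ with $u - \phi$ having a local max at $x_0$. By the comparison/containment property of the Monge–Amp\`ere measure for convex functions — on a small ball $B$ where $u \le \phi + (u(x_0)-\phi(x_0))$ with equality at $x_0$, the subgradient of $\phi$ at $x_0$ is contained in $\partial u(B)$ — I would deduce $\mu_u(B) \ge \mu_{\phi}(B_\delta)$-type lower bounds, and letting $B \downarrow \{x_0\}$ combined with continuity of $\kappa$ conclude $\det D^2\phi(x_0) \le \kappa(x_0) R(\nabla\phi(x_0))$, i.e. $F(x_0, u(x_0),\nabla\phi(x_0), D^2\phi(x_0)) \le 0$. (Here local convexity of $\phi$ near $x_0$ is automatic from the max condition, so $D^2\phi(x_0)\ge 0$ and $\det D^2\phi(x_0)$ is meaningful.) The super-solution inequality is dual: if $u - \phi$ has a local min at $x_0$ with $D^2\phi(x_0)\ge 0$, then $\partial\phi(B_\delta) \supset \partial u(\{x_0\})$ locally forces $\mu_u$ on small balls to be bounded above by $\mu_\phi$, and the limit yields $\det D^2\phi(x_0) \ge \kappa(x_0) R(\nabla\phi(x_0))$. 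The standard reference facts about inclusions of subgradient images of touching convex functions (as in Gutiérrez's book) make both of these routine.

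\emph{Viscosity $\Rightarrow$ generalised.} Here I would argue by approximation and uniqueness of the generalised solution with given data. Given a convex viscosity solution $u$, fix a ball $B \Subset \Omega$; since $\kappa$ is continuous and bounded and $u$ is continuous on $\bar B$, Lemma~\ref{lem:exist}-type arguments (applied on $B$ with $u|_{\partial B}$ as boundary data) produce a generalised solution $w$ on $B$ with $w = u$ on $\partial B$. By the first implication $w$ is also a viscosity solution on $B$, and both $u$ and $w$ are convex, continuous up to $\partial B$, and agree there. Applying a comparison principle for viscosity solutions on the bounded domain $B$ with continuous boundary data (the classical interior comparison for Monge–Amp\`ere, which holds here because $\kappa \ge 0$ makes $F$ proper after a standard change of variables, or alternatively invoking the relaxed comparison principle promised in the introduction) forces $u = w$ on $B$. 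Since $B \Subset \Omega$ was arbitrary, $u$ is a generalised solution on all of $\Omega$.

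\emph{Main obstacle.} The delicate step is the viscosity $\Rightarrow$ generalised direction, specifically securing a comparison principle for \emph{convex} viscosity solutions on a ball that is strong enough to identify $u$ with the generalised solution $w$, given that $F$ is only degenerate elliptic and not strictly monotone in $z$. The paper's own framing signals that the usual comparison principle fails up to the boundary; on an interior ball with matching continuous boundary data this is less severe, but care is needed — one likely routes through the equivalence with generalised solutions on $B$ and the \emph{uniqueness} in Lemma~\ref{lem:exist} rather than a direct PDE comparison, or uses the geometric (Alexandrov-measure) characterization to show the two convex functions have the same Monge–Amp\`ere measure and the same boundary trace, hence coincide. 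Assembling that argument cleanly, while only citing results available at this point in the paper, is where the real work lies.
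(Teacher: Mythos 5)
Your generalised $\Rightarrow$ viscosity direction is essentially the paper's argument (Lemma~\ref{lem:aleksToVisc}): the touching test function, Gutiérrez's containment lemma for subgradient images, and shrinking the ball to localize. That part is sound.

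The viscosity $\Rightarrow$ generalised direction is where the gaps lie, and you correctly sense that this is the delicate step, but the routes you suggest for closing it do not quite work.
\begin{enumerate}[label=(\roman*)]
\item The claim that ``$\kappa\ge 0$ makes $F$ proper after a standard change of variables'' is not right: the operator $F(x,z,p,X)=-\det X+\kappa(x)R(p)$ has no $z$-dependence at all, so it is not proper, and properness is not what rescues comparison here. The paper instead invokes the Ishii--Lions structure condition for Monge--Amp\`ere type operators (Theorem~\ref{thm:comparisonGauss}, i.e.\ \cite[Theorem~V.2]{IshiiLions}), which applies to \emph{continuous} sub-/super-solutions on a bounded domain with ordered boundary values.
\item Falling back on ``the relaxed comparison principle promised in the introduction'' would be circular: Theorem~\ref{thm:comparison} is proved via Theorem~\ref{thm:uniqueness}, which uses Theorem~\ref{thm:exist}, which uses the equivalence you are trying to prove.
\item Even granting a classical comparison on $B$, your plan requires the generalised solution $w$ of the localized problem to attain the boundary data $u|_{\partial B}$ \emph{continuously}, so that $u$ and $w$ can both be pushed through the comparison principle in both directions. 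This is not automatic for arbitrary continuous data on a ball. The paper addresses exactly this point by damping $\kappa$ to zero near $\partial E$ and citing Bakelman~\cite[Theorem~11.8]{Bakelman_Elliptic} for continuous attainment of the boundary values.
\end{enumerate}
The paper's route is also structurally different in a way worth noting: it never establishes $u=w$ on a ball. It constructs \emph{two} auxiliary generalised solutions with perturbed curvatures --- a super-solution $v_\epsilon$ (smaller curvature, lying above $u$) and a sub-solution $w$ (lying below $u$) --- applies the classical Ishii--Lions comparison one-sidedly to each, and then uses subgradient-image ordering (Gutiérrez for the equal-boundary case and Lemma~\ref{lem:subgradOrder} for the inequality case) to sandwich $\int_{\partial u(E)} R(p)^{-1}\,dp$ between $\int_E\kappa\,dx$ from both sides. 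Your approach via ``identify $u$ with the generalised solution by uniqueness'' would also need a uniqueness statement for the localized Dirichlet problem on $B$ with data $u|_{\partial B}$ attained classically; Lemma~\ref{lem:exist} as stated gives uniqueness only for the maximal (weak-Dirichlet) solution, which is not quite the form you need. So while your plan identifies the right high-level ingredients (localization, generalised-solution existence on balls, comparison, subgradient ordering), the specific tools you reach for either fail (properness) or are unavailable at this stage (relaxed comparison), and the boundary-attainment issue is left open.
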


While this equivalence is known for Monge-Amp\`ere type equations~\cite[\S~4.1.4]{Villani}, a detailed proof in the particular case of the prescribed Gaussian curvature equation is not readily found in the literature.  For completeness, we provide a proof in Appendix~\ref{app:equivalence}.  We note that this proof makes use of several classical concepts that will be introduced throughout the paper.  However, it does not depend on any of our key theorems except for Theorem~\ref{lem:subgradOrder}, which is itself a simple consequence of the definition of the subgradient.

\section{Convexity constraint}\label{sec:convexity}

The equation~\eqref{eq:MA} for prescribed Gaussian curvature is elliptic only on the space of convex functions, and convexity of the solution needs to be included as an additional constraint.  
It is not immediately evident how to develop practical numerical methods that enforce this constraint.  Instead, as in~\cite{FroeseTransport}, we wish to absorb this constraint into the PDE operator to produce an equivalent equation that is globally elliptic and automatically selects the convex solution.  
In that work, the determinant of the Hessian was re-expressed as
\[ {\det}^+(D^2u) = \prod\limits_{j=1}^n \max\{\lambda_j(D^2u),0\} + \sum\limits_{j=1}^n \min\{\lambda_j(D^2u),0\} \]
where
\[ \lambda_1(D^2u) \leq \ldots \leq \lambda_n(D^2u) \]
are the eigenvalues of the matrix $D^2u$.
For smooth $u$, this is equivalent to $\det(D^2u)$ when $u$ is an admissible (convex) function, and produces a negative result otherwise.  Equivalence in the sense of viscosity solutions was not previously established.

Here, we will use an alternate reformulation and replace the PDE operator~\eqref{eq:PDE} with the following.
\bq\label{eq:MAconvex}
F(x,z,p,X) = 
\max\left\{-\prod\limits_{j=1}^n\max\{\lambda_j(X),0\} + \kappa(x)R(p), -\lambda_1(X)\right\}.
\eq
We remark that since the desired solution formally has a positive semi-definite Hessian (i.e. $0 \leq \lambda_1 \leq \ldots \leq \lambda_n$), the equation
\bq\label{eq:constraint1} -\lambda_1(X) = 0 \eq
defines the boundary of the constraint set.  Equation~\eqref{eq:constraint1} is equivalent to the constrained equation
\bq\label{eq:constrain2} -\det(X) = 0, \quad 0 \leq \lambda_1(X) \leq \ldots \leq \lambda_n(X), \eq
which is the prescribed Gaussian curvature equation corresponding to $\kappa = 0$.  Intuitively, then, setting the operator~\eqref{eq:MAconvex} equal to zero requires that either
\[ \prod\limits_{j=1}^n\max\{\lambda_j(X),0\} = \kappa(x)R(p), \quad \lambda_1(X) \geq 0 \]
or
\[ \lambda_1(X) = 0, \quad 0 = \prod\limits_{j=1}^n\max\{\lambda_j(X),0\} \geq \kappa(x)R(p) \geq 0. \]
In either case, setting $X = D^2u$, we recover a convex solution of the prescribed Gaussian curvature equation.

We now rigorously establish this equivalence in the context of viscosity solutions.
For brevity in the following exposition, we will define the function ${\det}^+:\Sf^n\to[0,\infty)$ by
\bq\label{eq:detP}
{\det}^+(X) \equiv \prod\limits_{j=1}^n\max\{\lambda_j(X),0\} = \begin{cases}\det(X), & X \geq 0\\0, & \text{otherwise}.  \end{cases}
\eq

The definition of the viscosity solution for this operator is the same as Definition~\ref{def:viscosity1} except that we allow for the possibility of non-convex sub(super)-solutions.  

\begin{definition}[Viscosity sub-solution]\label{def:subsolution}
An upper semi-continuous function $u$ is a \emph{viscosity sub-solution} of~\eqref{eq:MAconvex} in $\Omega$ if for every $\phi\in C^2({\Omega})$, whenever $u-\phi$ has a local maximum at $x \in {\Omega}$, then
\[ 
F(x,u(x),\nabla \phi(x),D^2\phi(x)) \leq   0 .
\]
\end{definition}
\begin{definition}[Viscosity super-solution]\label{def:supersolution}
A lower semi-continuous function $u$ is a \emph{viscosity super-solution} of~\eqref{eq:MAconvex} in $\Omega$ if for every $\phi\in C^2({\Omega})$, whenever $u-\phi$ has a local minimum  at $x \in {\Omega}$ and $D^2\phi(x) \geq 0$, then
\[ 
F(x,u(x),\nabla \phi(x),D^2\phi(x)) \geq   0 .
\]
\end{definition}

\begin{definition}[Viscosity solution]\label{def:viscosity1}
A function $u:\Omega\to\R$ is a \emph{viscosity solution} of~\eqref{eq:MAconvex} if it is both a sub-solution and a super-solution.
\end{definition}

\begin{thm}[Global ellipticity]\label{thm:elliptic}
The convexified \MA operator~\eqref{eq:MAconvex} is globally degenerate elliptic.
\end{thm}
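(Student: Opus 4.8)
The plan is to verify the two monotonicity properties demanded by Definition~\ref{def:elliptic} directly from the formula \eqref{eq:MAconvex}. Since the right-hand side of \eqref{eq:MAconvex} has no dependence on the solution value $z$, monotonicity in $z$ holds trivially. It therefore remains to prove that $X \mapsto F(x,z,p,X)$ is non-increasing for the positive semi-definite order on $\Sf^n$, i.e. that $F(x,z,p,X) \leq F(x,z,p,Y)$ whenever $X \geq Y$. Because the pointwise maximum of two non-increasing functions is non-increasing, it suffices to establish this for each of the two arguments of the $\max$ in \eqref{eq:MAconvex} separately.

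The single tool needed is the monotonicity of the ordered eigenvalues under the semidefinite order: if $X \geq Y$ then $\lambda_j(X) \geq \lambda_j(Y)$ for every $j = 1,\ldots,n$. This follows at once from the Courant--Fischer min-max formula (or from Weyl's inequality $\lambda_j(X) \geq \lambda_j(Y) + \lambda_1(X-Y)$ together with $\lambda_1(X-Y) \geq 0$), since $v^\top X v \geq v^\top Y v$ for all $v \in \R^n$. From $\lambda_1(X) \geq \lambda_1(Y)$ we get $-\lambda_1(X) \leq -\lambda_1(Y)$, so the second argument of the $\max$ is non-increasing. For the first argument, apply the non-decreasing map $t \mapsto \max\{t,0\}$ to obtain $\max\{\lambda_j(X),0\} \geq \max\{\lambda_j(Y),0\} \geq 0$ for each $j$; since these are comparisons between non-negative numbers, multiplying over $j$ preserves the inequality, giving ${\det}^+(X) \geq {\det}^+(Y)$ and hence $-{\det}^+(X) + \kappa(x)R(p) \leq -{\det}^+(Y) + \kappa(x)R(p)$. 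Taking the maximum of the two arguments then yields $F(x,z,p,X) \leq F(x,z,p,Y)$, as required.

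No genuine difficulty arises; the one point that deserves attention is that the truncation $\max\{\cdot,0\}$ must be performed \emph{before} taking the product, so that the product inequality is applied to tuples of non-negative numbers --- this is exactly the device by which ${\det}^+$ repairs the monotonicity that $\det$ fails to have on indefinite matrices. We also note that, in contrast to the original operator \eqref{eq:MAop}, this argument uses neither convexity of $u$ nor any sign or size condition on $\kappa$, which is precisely why the reformulated operator \eqref{eq:MAconvex} is globally, rather than merely conditionally, degenerate elliptic.
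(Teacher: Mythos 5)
Your proof is correct and follows essentially the same route as the paper: both arguments reduce global degenerate ellipticity to the fact that $-{\det}^+$ and $-\lambda_1$ are non-increasing in the eigenvalues of $X$, which are themselves monotone under the semidefinite order. The only difference is that the paper delegates the eigenvalue-monotonicity reduction to a citation, whereas you supply the Courant--Fischer/Weyl step and the non-negativity of the truncated eigenvalues explicitly, which makes the argument self-contained.
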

\begin{proof}
It is sufficient to show that the operator is a non-increasing function of the eigenvalues $\lambda_1(X), \ldots, \lambda_n(X)$~\cite{caffarelli_eigs}.

Both the modified function ${\det}^+(X)$ and the smallest eigenvalue $\lambda_1(X)$ are non-decreasing functions of the eigenvalues of the symmetric matrix $X$, and thus the combination
\[ \max\left\{-\prod\limits_{j=1}^n\max\{\lambda_j(X),0\} + \kappa(x)R(p), -\lambda_1(X)\right\} \]
is a non-increasing function of the eigenvalues and the operator is degenerate elliptic.
\end{proof}

A key advantage of this formulation is that it automatically forces the solution to be convex instead of requiring this condition to be included as an additional constraint in the definition.  In particular, all sub-solutions of this new formulation are convex, which means that viscosity solutions must also be convex.

\begin{lem}[Sub-solutions are convex]\label{lem:subConvex}
Let $u$ be an upper semi-continuous sub-solution of the convexified \MA equation~\eqref{eq:MAconvex}.  Then $u$ is convex.
\end{lem}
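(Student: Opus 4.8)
The operator \eqref{eq:MAconvex} is a maximum of two terms, and the plan is to use only the second one. Suppose $\phi \in C^2(\Omega)$ and $u - \phi$ has a local maximum at a point $x \in \Omega$. Then the sub-solution property gives
\[ \max\left\{ -{\det}^+(D^2\phi(x)) + \kappa(x)R(\nabla\phi(x)),\; -\lambda_1(D^2\phi(x))\right\} \le 0, \]
so in particular $-\lambda_1(D^2\phi(x)) \le 0$, that is, $D^2\phi(x) \ge 0$. In other words $u$ is a viscosity sub-solution of the pure convexity operator $-\lambda_1(D^2 u) = 0$: every smooth function touching $u$ from above has a positive semi-definite Hessian at the contact point. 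Observe that the definition of sub-solution for \eqref{eq:MAconvex} imposes no convexity or sign restriction on the admissible test functions, which is what makes the remaining argument possible.

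The main work is to deduce genuine convexity from this ``viscosity convexity'', and I would argue by contradiction. If $u$ is not convex then, since $\Omega$ is convex by (H1), there are $x_0, x_1 \in \Omega$ with $[x_0,x_1] \subset \Omega$ and $t \in (0,1)$ for which $2\beta := u(c) - \bigl[(1-t)u(x_0) + t u(x_1)\bigr] > 0$, where $c = (1-t)x_0 + t x_1$. Let $\ell$ be the affine function agreeing with $u$ at $x_0$ and $x_1$ and constant in directions orthogonal to $v := x_1 - x_0$, let $\sigma(x) = \langle x - x_0, v\rangle/\abs{v}$ be the arclength coordinate along the line through $x_0$ and $x_1$, and let $\pi(x)$ be the orthogonal projection of $x$ onto that line. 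For parameters $A, B > 0$ I would test $u$ against the quadratic polynomial
\[ \phi(x) = \ell(x) + A\,\sigma(x)\bigl(\abs{v} - \sigma(x)\bigr) + B\,\abs{x - \pi(x)}^2, \]
whose Hessian $-2A\abs{v}^{-2}vv^T + 2B\bigl(I - \abs{v}^{-2}vv^T\bigr)$ has the strictly negative eigenvalue $-2A$ in the direction $v$. Along the segment, $\phi = \ell$ at the endpoints and $\phi \ge \ell$ in between, so $u - \phi$ vanishes at $x_0$ and $x_1$ and, once $A$ is small, exceeds $\beta$ at $c$. Choosing $\rho > 0$ small enough that $u - \ell < \beta$ on $\bar B_\rho(x_0)\cup\bar B_\rho(x_1)$ (possible since $u$ is upper semi-continuous and $u = \ell$ at $x_0, x_1$) and that the cylinder $K = \{x : 0 \le \sigma(x) \le \abs{v},\ \abs{x-\pi(x)} \le \rho\}$ is compactly contained in $\Omega$, and then $B$ large enough that $u - \phi < \beta$ on the lateral boundary of $K$ (possible since $u$ is bounded above on the compact $K$), one forces $u - \phi$ to attain its maximum over $K$ at an interior point $\xi$. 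By the first paragraph $D^2\phi(\xi) \ge 0$, contradicting the negative eigenvalue of $D^2\phi$. Hence $u$ is convex, and therefore continuous on $\Omega$.

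The delicate point — and the step I expect to need the most care — is this localisation: because $u$ is merely upper semi-continuous, one must check that the maximum of $u - \phi$ over $K$ escapes neither the flat caps (controlled by upper semi-continuity of $u$ at $x_0$ and $x_1$) nor the lateral surface (controlled by local boundedness above of $u$), and the order of the choices ($A$, then $\rho$, then $B$) matters. An alternative that bypasses this bookkeeping is to regularise by sup-convolution: from the first paragraph $u$ is a viscosity sub-solution of the $x$-independent operator $-\lambda_1(D^2 u) = 0$, so its sup-convolution $u^\varepsilon$ is such a sub-solution on a slightly smaller domain; being semiconvex, $u^\varepsilon$ is twice differentiable almost everywhere by Alexandrov's theorem, so $D^2 u^\varepsilon \ge 0$ a.e.; a semiconvex function whose Hessian is a.e.\ non-negative is convex, so $u^\varepsilon$ is convex; finally $u = \lim_{\varepsilon\to 0^+} u^\varepsilon$ is a non-increasing limit of convex functions, hence convex.
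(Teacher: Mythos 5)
Your proof is correct and begins exactly as the paper's does: from the sub-solution inequality for \eqref{eq:MAconvex} you extract $-\lambda_1(D^2\phi(x)) \le 0$ at every contact point, so $u$ is a viscosity sub-solution of $-\lambda_1(D^2u)=0$. At that point the paper stops and cites Theorem~1 of \cite{ObermanCE} to conclude convexity; you instead give a self-contained proof of that implication. Your cylinder construction is sound: the quadratic test function has the constant Hessian $-2A\abs{v}^{-2}vv^T + 2B(I - \abs{v}^{-2}vv^T)$ with strictly negative eigenvalue $-2A$, it coincides with the affine interpolant at the endpoints and dominates it on the segment, and the ordering of the parameter choices ($A$ small so $u-\phi > \beta$ at $c$, then $\rho$ small for the caps and for $K \subset\subset \Omega$, then $B$ large for the lateral surface) is consistent since none of the later choices disturbs the earlier inequalities. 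This forces an interior maximiser $\xi$ of the USC function $u-\phi$ over the compact $K$, and $D^2\phi(\xi)\ge 0$ then contradicts $\lambda_1(D^2\phi)=-2A<0$. The sup-convolution alternative you sketch is also a standard and correct route to the same conclusion. So the approach is the same as the paper's; what you add is an explicit proof of the step the paper outsources to a citation, trading length for self-containment.
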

\begin{proof}
Choose $x_0\in\Omega$ and $\phi\in C^2$ such that $u-\phi$ has a local maximum at $x_0$.  Since $u$ is a sub-solution of~\eqref{eq:MAconvex},
\[ \max\{-{\det}^+(D^2\phi(x_0)) + \kappa(x_0)R(\nabla\phi(x_0)),-\lambda_1(D^2\phi(x_0))\} \leq 0. \]
An immediate consequence of this is that $-\lambda_1(D^2\phi(x_0)) \leq 0$
 and therefore $u$ is also a sub-solution of
\[ -\lambda_1(D^2u(x)) = 0. \]
This is precisely the hypothesis of~\cite[Theorem~1]{ObermanCE}, which establishes the convexity of $u$. 
\end{proof}

One of our goals is to establish that viscosity solutions of the convexified equation are equivalent to viscosity solutions of the original equation.  The two different notions of sub- and super-solutions are not strictly equivalent since the convexified operator allows for non-convex super-solutions.  However, we can demonstrate that the concepts of sub- and super-solutions are equivalent on the set of convex functions.  Combined with the fact that sub-solutions are always convex, this is sufficient for proving that the two different notions of viscosity solution are equivalent (Theorem~\ref{thm:equivalence}).


\begin{lem}\label{lem:sub12}
A convex function $u$ is a sub-solution of the original \MA equation~\eqref{eq:PDE} if and only if it is a sub-solution of the convexified \MA equation~\eqref{eq:MAconvex}.
\end{lem}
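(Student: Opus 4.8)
The plan is to prove both implications by comparing the two operators directly when they are evaluated at a convex test function, exploiting the fact that a local maximum of $u-\phi$ forces $D^2\phi$ to be positive semi-definite at the contact point whenever $u$ is convex. Write $A(x,p,X) = -\det(X)+\kappa(x)R(p)$ for the original operator and $B(x,p,X) = \max\{-{\det}^+(X)+\kappa(x)R(p),\,-\lambda_1(X)\}$ for the convexified one. The key observation is that at any point $X\in\Sf^n$ with $X\geq 0$ we have ${\det}^+(X)=\det(X)$ and $-\lambda_1(X)\leq 0$, so $B(x,p,X)=\max\{A(x,p,X),-\lambda_1(X)\}$; in particular $B(x,p,X)\leq 0$ if and only if both $A(x,p,X)\leq 0$ and $\lambda_1(X)\geq 0$. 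So on the cone $\{X\geq 0\}$ the subsolution inequality for $B$ is exactly the subsolution inequality for $A$ together with the (automatically satisfied, for subsolutions) constraint $\lambda_1(X)\geq 0$.

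Now I would argue each direction. \emph{($\Rightarrow$)} Suppose $u$ is convex and a sub-solution of~\eqref{eq:PDE}. Let $\phi\in C^2(\Omega)$ and let $u-\phi$ have a local maximum at $x\in\Omega$. Since $u$ is convex, a standard argument (the supporting hyperplane of $u$ at $x$ lies below $u$ hence below $\phi + (u(x)-\phi(x))$ near $x$, forcing $\phi$ to have a supporting affine function from below at $x$) shows $D^2\phi(x)\geq 0$, i.e. $\lambda_1(D^2\phi(x))\geq 0$. Then ${\det}^+(D^2\phi(x)) = \det(D^2\phi(x))$, so $-{\det}^+(D^2\phi(x)) + \kappa(x)R(\nabla\phi(x)) = A(x,\nabla\phi(x),D^2\phi(x)) \leq 0$ by the sub-solution property for~\eqref{eq:PDE}, while $-\lambda_1(D^2\phi(x))\leq 0$; taking the max gives $B(x,\nabla\phi(x),D^2\phi(x))\leq 0$, so $u$ is a sub-solution of~\eqref{eq:MAconvex}. \emph{($\Leftarrow$)} Suppose $u$ is convex and a sub-solution of~\eqref{eq:MAconvex}. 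Let $u-\phi$ have a local max at $x$. The sub-solution inequality $B(x,\nabla\phi(x),D^2\phi(x))\leq 0$ forces both summands in the max to be $\leq 0$: in particular $-\lambda_1(D^2\phi(x))\leq 0$, so $D^2\phi(x)\geq 0$ and ${\det}^+(D^2\phi(x))=\det(D^2\phi(x))$; hence $A(x,\nabla\phi(x),D^2\phi(x)) = -{\det}^+(D^2\phi(x))+\kappa(x)R(\nabla\phi(x))\leq 0$, which is precisely the sub-solution condition for~\eqref{eq:PDE}. (Note convexity of $u$ was not even needed in this direction — indeed by Lemma~\ref{lem:subConvex} every sub-solution of~\eqref{eq:MAconvex} is automatically convex — but it is harmless to keep it in the statement.)

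The main technical point, and the only place any real work is needed, is the claim that when $u$ is convex and $u-\phi$ attains a local maximum at an interior point $x$, the Hessian $D^2\phi(x)$ is positive semi-definite. I would justify this by noting that convexity of $u$ means $u(y)\geq u(x) + \xi\cdot(y-x)$ for some subgradient $\xi$ and all $y$ near $x$, while the local max gives $\phi(y)-\phi(x)\geq u(y)-u(x)\geq \xi\cdot(y-x)$; thus $\phi(y) - \phi(x) - \xi\cdot(y-x)\geq 0$ near $x$ with equality at $x$, so this $C^2$ function has a local minimum at $x$, forcing $\nabla\phi(x)=\xi$ and $D^2\phi(x)\geq 0$ by the second-order necessary condition. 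Everything else is the elementary case analysis of the $\max$ recorded above, using only the identity in~\eqref{eq:detP}. I do not expect any serious obstacle; the lemma is essentially a bookkeeping statement reconciling the two operator definitions on the admissible cone.
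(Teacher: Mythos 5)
Your proof is correct and is essentially the paper's argument: both establish the key fact that for convex $u$, a local maximum of $u-\phi$ at $x_0$ forces $D^2\phi(x_0)\geq 0$ by passing through the supporting hyperplane of $u$ at $x_0$, and then observe that on the cone $\{X\geq 0\}$ the two operators impose equivalent sub-solution inequalities since $\det^+=\det$ and $-\lambda_1\leq 0$ there. Your remark that convexity is superfluous in the ($\Leftarrow$) direction (via Lemma~\ref{lem:subConvex}) is a correct and mildly useful aside, but the structure of the argument is the same.
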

\begin{proof}
Choose any $\phi\in C^2$ and $x_0\in\Omega$ such that $u-\phi$ has a local maximum at $x_0$.  Since $u$ is convex, there exists some $q\in\partial u(x_0)$ and we can define the supporting hyperplane 
\[ p(x) = u(x_0) + q\cdot(x-x_0) \leq u(x). \]
For $x$ near $x_0$ we have
\[ p(x)-\phi(x) \leq u(x)-\phi(x) \leq u(x_0)-\phi(x_0) = p(x_0)-\phi(x_0). \]
Thus $p-\phi$ is also maximised at $x_0$, which requires
 $D^2\phi(x_0)  \geq D^2p(x_0) = 0$.  This in turn implies that $-\lambda_1(D^2\phi(x_0)) \leq 0$ and ${\det}^+(D^2\phi(x_0)) = {\det}(D^2\phi(x_0))$.  Under these constraints, the condition that $u$ is a sub-solution of~\eqref{eq:MAconvex}:
\[ \max\{-{\det}^+(D^2\phi(x_0)) + \kappa(x_0)R(\nabla\phi(x_0)), -\lambda_1(D^2\phi(x_0))\} \leq 0, \]
is equivalent to the condition that $u$ is a sub-solution of~\eqref{eq:PDE}:
\[
-\det(D^2\phi(x_0)) + \kappa(x_0)R(\nabla\phi(x_0)) \leq 0. 
\]
\end{proof}


\begin{lem}\label{lem:super12}
A convex function $u$ is a super-solution of the original \MA equation~\eqref{eq:PDE} if and only if it is a super-solution of the convexified \MA equation~\eqref{eq:MAconvex}.
\end{lem}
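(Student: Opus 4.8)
The plan is to exploit the fact that the super-solution definitions for both \eqref{eq:PDE} and \eqref{eq:MAconvex} test against exactly the same class of functions: those $\phi\in C^2(\Omega)$ for which $u-\phi$ has a local minimum at the contact point $x_0$ \emph{and} $D^2\phi(x_0)\geq 0$. On this restricted class the two operators are tightly linked. Since $D^2\phi(x_0)\geq 0$ we have $\lambda_1(D^2\phi(x_0))\geq 0$, hence $-\lambda_1(D^2\phi(x_0))\leq 0$, and moreover ${\det}^+(D^2\phi(x_0))=\det(D^2\phi(x_0))$ by \eqref{eq:detP}. Thus, at any admissible contact point, the convexified operator \eqref{eq:MAconvex} reduces to
\[ \max\left\{-\det(D^2\phi(x_0))+\kappa(x_0)R(\nabla\phi(x_0)),\,-\lambda_1(D^2\phi(x_0))\right\}, \]
with the second entry nonpositive, while the original operator \eqref{eq:MAop} is precisely the first entry.

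First I would prove the forward implication. If $u$ is a super-solution of \eqref{eq:PDE}, then for every admissible test function $-\det(D^2\phi(x_0))+\kappa(x_0)R(\nabla\phi(x_0))\geq 0$; since the convexified operator at $x_0$ is a maximum whose first argument equals this quantity, it is also $\geq 0$, so $u$ is a super-solution of \eqref{eq:MAconvex}.

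For the converse, suppose $u$ is a super-solution of \eqref{eq:MAconvex} and fix an admissible test function. The convexified inequality says the displayed maximum is $\geq 0$, so at least one of its two arguments is $\geq 0$. If the first argument is $\geq 0$, the original super-solution inequality holds directly. Otherwise the second argument must be $\geq 0$, i.e. $-\lambda_1(D^2\phi(x_0))\geq 0$; combined with $D^2\phi(x_0)\geq 0$ this forces $\lambda_1(D^2\phi(x_0))=0$, hence $\det(D^2\phi(x_0))=0$, and then $-\det(D^2\phi(x_0))+\kappa(x_0)R(\nabla\phi(x_0))=\kappa(x_0)R(\nabla\phi(x_0))\geq 0$ because $\kappa\geq 0$ and $R>0$. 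Either way the original super-solution inequality is recovered.

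The only point needing any care — the closest thing to an obstacle — is the degenerate case $\lambda_1(D^2\phi(x_0))=0$, where the convexified inequality is satisfied trivially by its second argument and one must check separately that the determinant term vanishes so that the original inequality follows from $\kappa\geq 0$. Everything else is a direct unwinding of the definitions, closely parallel to the proof of Lemma~\ref{lem:sub12}, and I would not expect the write-up to be any longer.
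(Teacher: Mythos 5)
Your proof is correct and follows essentially the same route as the paper's: restrict attention to test functions with $D^2\phi(x_0)\geq 0$, observe that this forces ${\det}^+=\det$ and $-\lambda_1\leq 0$, and conclude that the two super-solution inequalities coincide. The paper's version compresses the converse direction into a single ``is equivalent to''; your explicit treatment of the degenerate case $\lambda_1(D^2\phi(x_0))=0$, where $\det(D^2\phi(x_0))=0$ and the inequality is rescued by $\kappa\geq 0$, is exactly the step the paper leaves implicit, so you have correctly identified and filled in the only subtlety.
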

\begin{proof}
Choose any $\phi\in C^2$ and $x_0\in\Omega$ such that $u-\phi$ is minimised at $x_0$ and $D^2\phi(x_0) \geq 0$.  
As in the previous lemma, this restriction on $\phi$ ensures that the condition
\[ \max\{-{\det}^+(D^2\phi(x_0)) + \kappa(x_0)R(\nabla\phi(x_0)), -\lambda_1(D^2\phi(x_0))\} \geq 0 \]
is equivalent to 
\[
-\det(D^2\phi(x_0)) + \kappa(x_0)R(\nabla\phi(x_0)) \geq 0. 
\]
\end{proof}

Lemmas~\ref{lem:subConvex}-\ref{lem:super12} lead immediately to the equivalence of the two different formulations of the \MA equation.  

\begin{thm}[Equivalence of viscosity solutions]\label{thm:equivalence}
A function $u$ is a convex viscosity solution of the original \MA equation~\eqref{eq:PDE} if and only if it is a viscosity solution of the convexified \MA equation~\eqref{eq:MAconvex}.
\end{thm}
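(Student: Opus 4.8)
The plan is to simply assemble Lemmas~\ref{lem:subConvex}--\ref{lem:super12}, so the argument is essentially bookkeeping about which definition is in force. First I would handle the forward direction: suppose $u$ is a convex viscosity solution of~\eqref{eq:PDE}. Then $u$ is convex and is simultaneously a sub-solution and a super-solution of~\eqref{eq:PDE} in the sense of Definitions~\ref{def:subsolution}--\ref{def:supersolution}. Since $u$ is convex, Lemma~\ref{lem:sub12} upgrades the sub-solution property to a sub-solution property for the convexified operator~\eqref{eq:MAconvex}, and Lemma~\ref{lem:super12} does the same for the super-solution property. Hence $u$ is both a sub- and a super-solution of~\eqref{eq:MAconvex}, i.e.\ a viscosity solution of~\eqref{eq:MAconvex}.

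For the converse I would start from a viscosity solution $u$ of~\eqref{eq:MAconvex}. In particular $u$ is an upper semi-continuous sub-solution of~\eqref{eq:MAconvex}, so Lemma~\ref{lem:subConvex} forces $u$ to be convex. Once convexity is established, Lemmas~\ref{lem:sub12} and~\ref{lem:super12} run in the reverse direction and show that $u$ is a convex sub-solution and a convex super-solution of the original equation~\eqref{eq:PDE}, hence a convex viscosity solution of~\eqref{eq:PDE}. (Semi-continuity is not an issue in either direction, since a convex function on the open set $\Omega$ is automatically continuous there.)

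The one point that needs care — and the reason this deserves a theorem rather than a one-line remark — is the mismatch between the two notions of viscosity solution: Definition~\ref{def:viscosity1} for~\eqref{eq:PDE} builds convexity into the admissible sub- and super-solution classes, whereas the convexified operator a priori permits non-convex sub- and super-solutions. Lemma~\ref{lem:subConvex} is exactly what bridges this gap, showing that the extra generality on the sub-solution side is vacuous. On the super-solution side the convexified operator genuinely does admit non-convex super-solutions, but this is harmless here because any viscosity solution of~\eqref{eq:MAconvex} is also a sub-solution and is therefore convex by Lemma~\ref{lem:subConvex}. I do not expect any real obstacle beyond keeping straight which of the similarly named definitions is being invoked at each step.
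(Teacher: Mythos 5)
Your proposal is correct and matches the paper's approach exactly: the paper states that Lemmas~\ref{lem:subConvex}--\ref{lem:super12} lead immediately to the equivalence, and your write-up simply makes that assembly explicit, including the key observation that Lemma~\ref{lem:subConvex} recovers convexity in the converse direction.
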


\section{Generalised Dirichlet Problem}\label{sec:dirichlet}
One of the advantages of working with viscosity solutions is that boundary conditions can be included in the operator, which allows for a weak interpretation of Dirichlet boundary conditions~\cite{CIL}.  A key result that we will build to throughout this section is that a viscosity interpretation of the boundary conditions recovers the desired weak interpretation given in Definition~\ref{def:weakDirichlet}.

\subsection{Boundary conditions}\label{sec:bc}


The weak solution we are seeking is permitted to lie below the Dirichlet data.  Thus it is necessary to relax the usual notion of super-solution so as to allow these to satisfy this weak interpretation of the boundary conditions.  
In order to accomplish this, we introduce the modified PDE operators
\bq\label{eq:MAlower}
F_*(x,z,p,X) = \begin{cases}
\max\{-{\det}^+(X) + \kappa(x)R(p),-\lambda_1(X)\}, & x\in\Omega\\
\min\{z-g(x),\max\{-{\det}^+(X) + \kappa(x)R(p),-\lambda_1(X)\}\}, & x\in\partial\Omega, 
\end{cases}
\eq
\bq\label{eq:MAupper}
F^*(x,z,p,X) = \begin{cases}
\max\{-{\det}^+(X) + \kappa(x)R(p),-\lambda_1(X)\}, & x\in\Omega\\
\max\{z-g(x),\max\{-{\det}^+(X) + \kappa(x)R(p),-\lambda_1(X)\}\}, & x\in\partial\Omega.
\end{cases}
\eq

Sub- and super-solutions of the Dirichlet problem~\eqref{eq:MA} are defined as follows.

\begin{definition}[Viscosity sub(super)-solutions]\label{def:subsuperWeak}
A bounded upper (lower) semi-continuous function $u$ is a \emph{viscosity sub(super)-solution} of~\eqref{eq:MA} if for every $\phi\in C^2(\bar{\Omega})$, whenever $u-\phi$ has a local maximum (minimum)  at $x \in \bar{\Omega}$, then
\[ 
F_*^{(*)}(x,u(x),\nabla \phi(x),D^2\phi(x)) \leq(\geq)  0 .
\]
\end{definition}

\begin{rem}
In the definition of a super-solution, it is sufficient to use test functions $\phi$ satisfying $D^2\phi(x_0) > 0$.  For other smooth test functions, $-\lambda_1(D^2\phi(x_0)) > 0$ and the conditions $F^* \geq 0$ is automatically satisfied regardless of the behaviour of $u$.
\end{rem}

Originally, we required a viscosity solution to be both upper and lower semi-continuous, and therefore continuous.  However, viscosity solutions of the Dirichlet problem need not be continuous up to the boundary, so this condition needs to be relaxed.  We can do this by making use of the semi-continuous envelopes of a candidate solution~\cite{CIL}.

\begin{definition}[Semi-continuous envelopes]\label{def:envelopes}
 Let $u:{\Omega}\to\R$ be a bounded function.  Then for $x\in\bar{\Omega}$ its upper and lower semi-continuous envelopes are defined respectively by
\[ u^*(x) = \limsup\limits_{y\to x}u(y), \quad u_*(x) = \liminf\limits_{y\to x} u(y). \]
\end{definition}

\begin{definition}[Viscosity solution]\label{def:viscosity}
A bounded function $u:\Omega\to\R$ is a \emph{viscosity solution} of~\eqref{eq:MA} if $u^*$ is a sub-solution and $u_*$ is a super-solution.
\end{definition}

\begin{examp}
To illustrate the non-classical nature of the Dirichlet condition, we consider the one-dimensional Gaussian curvature equation with constant unit curvature: 
\bq\label{eq:1d}
F(x,u,u_x,u_{xx}) 
=
\begin{cases}
-u_{xx} + (1+u_x^2)^{3/2}, & x\in (0,1)\\
u+1, & x  = 0\\
u - 1, & x = 1.
\end{cases}
\eq
Note that in this case,
\[ \int_0^1 \kappa(x)\,dx = 1 < 2 = \int_{-\infty}^\infty (1+p^2)^{-3/2}\,dp, \]
so this problem satisfies the existence and uniqueness requirements of Lemma~\ref{lem:exist}.
We claim that the viscosity solution lies on the surface of the unit ball,
\[ u(x) = -\sqrt{1-x^2}, \]
which does not satisfy the Dirichlet boundary condition $u(1) = 1$.  See Figure~\ref{fig:super1D}.

A simple calculation verifies that the equation is satisfied in a classical sense on $[0,1)$.  It remains to verify the conditions for a viscosity solution at $x=1$.

Clearly, $u$ is a sub-solution since at $x=1$, any test function $\phi$ will satisfy
\[ F_*(1,u(1),\phi_x(1),\phi_{xx}(1)) \leq u(1) - 1 = -1 < 0. \]

Next we check the super-solution property at $x=1$.  To do so, we need to consider all functions $\phi\in C^2$ such that $u-\phi$ has a minimum at $x = 1$.  However, $u_{x}\to\infty$ as $x\to1^-$, which means that no such smooth test function exists and there is nothing to check.
\hfill $\square$
\end{examp}

\begin{figure}[tbhp]
  \centering
  \subfigure[]{\label{fig:super1D}\includegraphics[width=0.42\textwidth]{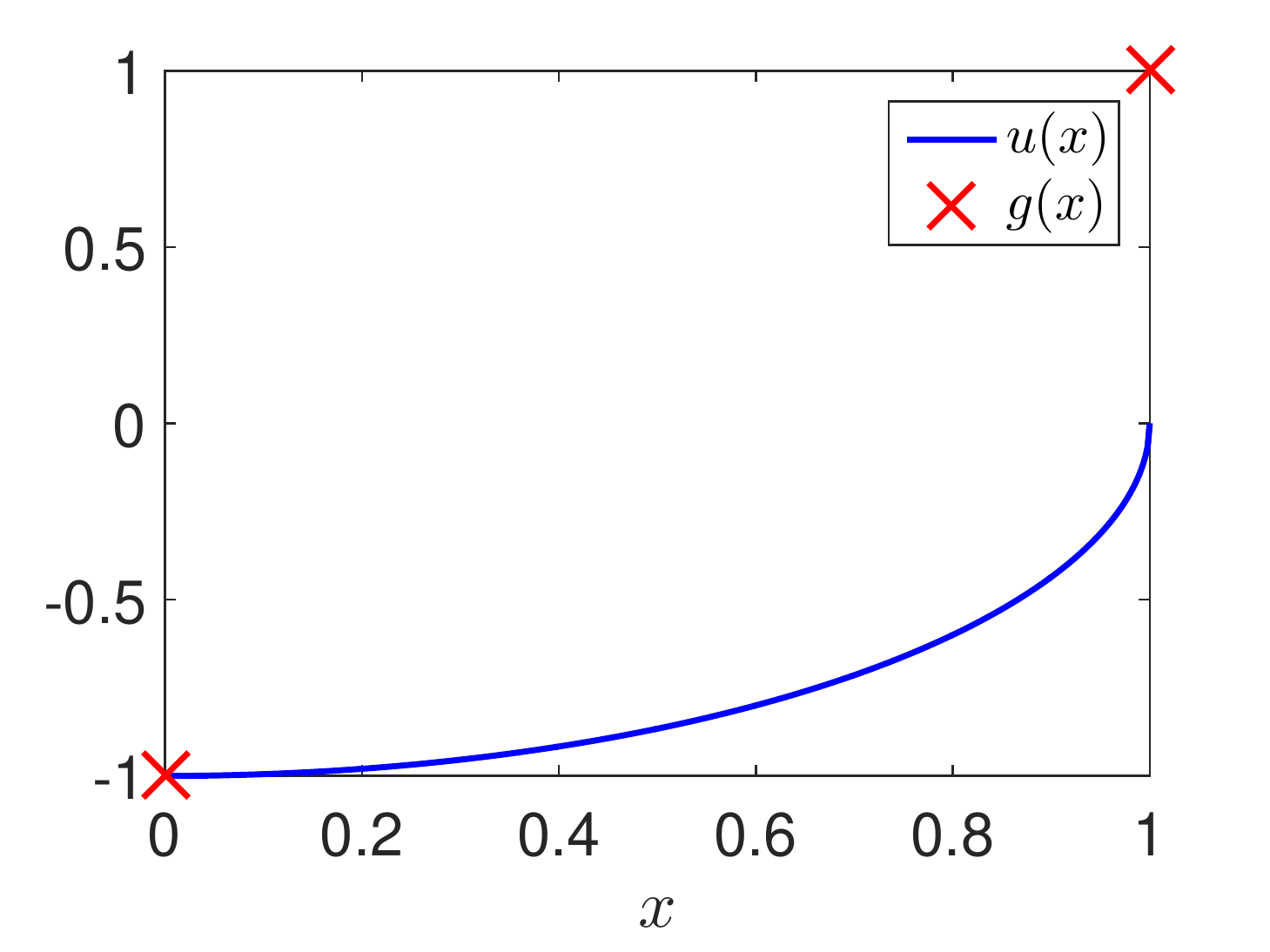}}
  \subfigure[]{\label{fig:sub1D}\includegraphics[width=0.42\textwidth]{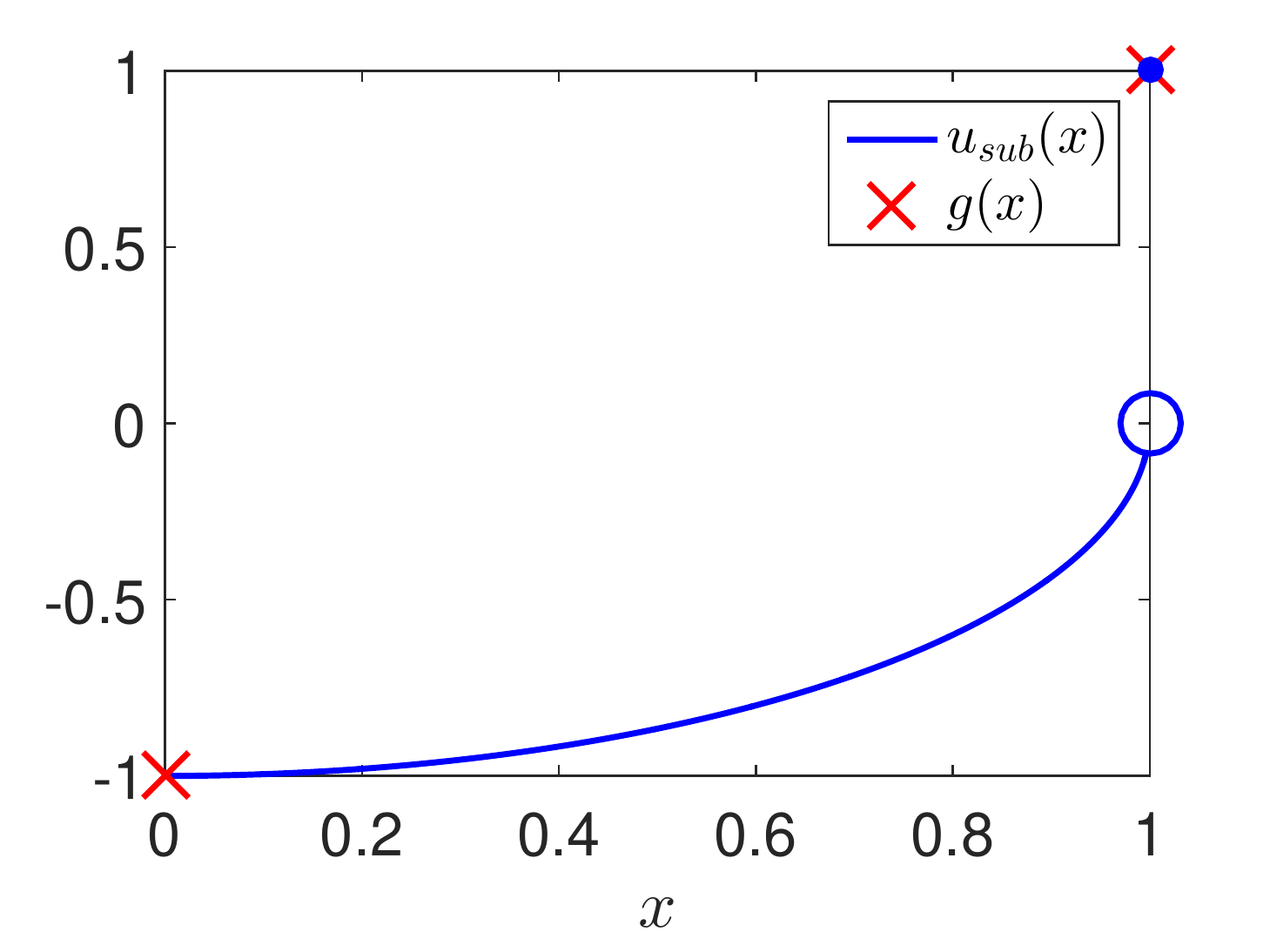}}
  \caption{\subref{fig:super1D}~A viscosity solution with constant Gaussian curvature that does not achieve the Dirichlet boundary conditions and \subref{fig:sub1D}~a sub-solution that lies above this viscosity solution.}
  \label{fig:1d}
\end{figure}

While the viscosity formulation does not require solutions to achieve the Dirichlet data, it does place some conditions on the behaviour of the solution near the boundary.  These conditions are outlined in the following two lemmas and Corollary~\ref{cor:viscBC}.  In particular, we find that sub-solutions must lie below the Dirichlet data, except possibly at a small number of points.  Viscosity solutions are always bounded above by the boundary data, though they are permitted to lie strictly below the Dirichlet boundary conditions.  This means that super-solutions cannot be required to lie above the Dirichlet boundary conditions; however, this traditional property can be violated only at points where the gradient is unbounded.

\begin{lem}[Behaviour of sub-solutions at boundary]\label{lem:subBC}
Let $u$ be an upper semi-continuous sub-solution of~\eqref{eq:MA} with data satisfying Hypothesis~\ref{hyp}.  Then $(u_*)^*\leq g$ on $\partial\Omega$.
\end{lem}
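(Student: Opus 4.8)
The plan is to argue by contradiction: suppose there is a boundary point $y \in \partial\Omega$ at which $(u_*)^*(y) > g(y)$. Since $(u_*)^*$ is upper semi-continuous and $g$ is continuous (H2), while the set where $(u_*)^* > g$ is then nonempty, I would first locate a point where the strict excess of $(u_*)^*$ over $g$ behaves maximally. Concretely, pick $\varepsilon > 0$ with $(u_*)^*(y) > g(y) + 2\varepsilon$; by continuity of $g$ and upper semi-continuity of $(u_*)^*$, the inequality $(u_*)^*(x) > g(x) + \varepsilon$ persists on a neighbourhood of $y$ in $\bar\Omega$. The idea is to slide a smooth test function (a large, slightly concave paraboloid, or a function with strictly negative definite Hessian) against $u$ so that $u - \phi$ attains a local maximum at some boundary point $x_0$ near $y$ where the excess still holds, and then read off a contradiction with the sub-solution inequality for $F_*$ at $x_0$.

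The key steps, in order, are as follows. First, since $(u_*)^* = u^*$ (the upper envelope of an upper semi-continuous function equals that function, up to the standard identity $((u)_*)^* \le u^*$ when $u$ is u.s.c.; I would record this reduction carefully so that the hypothesis "$u$ u.s.c. sub-solution" can be used directly via Definition~\ref{def:subsuperWeak}). Second, construct a test function of the form $\phi(x) = -M\abs{x - x^\ast}^2 + c$ with $x^\ast$ chosen just outside $\bar\Omega$ along the inward normal at $y$ and $M$ large; because $\Omega$ is uniformly convex (H1), for $M$ large enough the function $u - \phi$ — restricted to $\bar\Omega$ — attains its maximum over $\bar\Omega$ at a point $x_0$ that lies on $\partial\Omega$ and can be forced to be as close to $y$ as we like, so that $(u_*)^*(x_0) > g(x_0) + \varepsilon$. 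Here one uses that $u$ is bounded, so the maximiser exists, and the strongly concave paraboloid penalises interior points. Third, apply the sub-solution condition: $F_*(x_0, u(x_0), \nabla\phi(x_0), D^2\phi(x_0)) \le 0$. Since $x_0 \in \partial\Omega$,
\[
F_*(x_0,u(x_0),\nabla\phi(x_0),D^2\phi(x_0)) = \min\{u(x_0) - g(x_0), \max\{-{\det}^+(D^2\phi(x_0)) + \kappa(x_0)R(\nabla\phi(x_0)), -\lambda_1(D^2\phi(x_0))\}\}.
\]
But $D^2\phi(x_0) = -2M\,I < 0$, so $-\lambda_1(D^2\phi(x_0)) = 2M > 0$ and the inner $\max$ is strictly positive; hence $F_* \le 0$ forces $u(x_0) - g(x_0) \le 0$, i.e. $u(x_0) \le g(x_0)$. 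Fourth, reconcile this with $(u_*)^*(x_0) > g(x_0) + \varepsilon$: because $u$ is u.s.c., $u(x_0) = u^*(x_0) \ge (u_*)^*(x_0)$ (using $u_* \le u$ hence $(u_*)^* \le u^* = u$; one needs the reverse too, which again comes from u.s.c.), giving $u(x_0) > g(x_0) + \varepsilon$, a contradiction.

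The main obstacle I anticipate is the second step — arranging that the maximum of $u - \phi$ over the \emph{closed} domain actually occurs \emph{on the boundary}, and at a point close enough to $y$ that the excess inequality still holds. This is where uniform convexity of $\Omega$ is essential: one needs a concave test function that simultaneously (a) beats $u$'s boundedness so that no interior maximiser can dominate, and (b) is steep enough, as a function on $\bar\Omega$, that the restriction to $\partial\Omega$ has its max localised near $y$. I would handle this by combining a fixed large concave paraboloid (to push the maximiser toward $\partial\Omega$, exploiting that $\abs{x - x^\ast}^2$ is minimised over $\bar\Omega$ only near $y$ because of uniform convexity and the choice of $x^\ast$ on the outward normal) with a small linear perturbation tuned to $y$; the uniform-convexity estimate $\abs{x - x^\ast}^2 \ge \abs{y - x^\ast}^2 + \delta\abs{x - y}^2$ for $x \in \bar\Omega$ does the job. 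A secondary technical point is the careful bookkeeping of envelopes ($u$, $u^*$, $u_*$, $(u_*)^*$) and making sure each inequality is used in the correct direction; I would state the identity $(u_*)^* \le u^*$ (with equality when $u$ is u.s.c.) once at the outset and invoke it cleanly.
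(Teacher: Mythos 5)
Your overall strategy (touch $u$ from above by a smooth test function with strictly negative definite Hessian, then invoke the sub-solution inequality for $F_*$, which at a boundary touch-point forces $u - g \le 0$ because $-\lambda_1(D^2\phi)>0$) is exactly the engine of the paper's argument. However, there are two genuine gaps in your proposal.

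First, the reduction via ``$(u_*)^*(x)>g(x)+\varepsilon$ persists on a neighbourhood of $y$'' is not justified, and in general it fails. Upper semi-continuity of $(u_*)^*$ controls the function from \emph{above} near $y$ (values cannot jump up), not from below: $(u_*)^*(y)>c$ does not force $(u_*)^*>c'$ nearby. So even if you locate a boundary touch-point $x_0$ near $y$, you have no lower bound on $(u_*)^*(x_0)$ (let alone on $u(x_0)$) and no contradiction. More fundamentally, the content of the lemma is that $(u_*)^*$ — the interior limit of $u$ at the boundary — is controlled by $g$, even though $u$ itself at nearby boundary points may be strictly smaller than this interior limit. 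A proof by contradiction at $(u_*)^*$ cannot skip the step of relating the two. The paper does this in two passes: it first shows that for every $x_0\in\partial\Omega$ and every $\epsilon>0$ there is some $z\in B(x_0,\epsilon)\cap\partial\Omega$ with $u(z)\le g(z)$ (giving $u_*\le g$ on $\partial\Omega$ by continuity of $g$), and then converts $u_*\le g$ into $(u_*)^*\le g$ by a separate convexity argument (writing an interior point $y_\epsilon$ where $u$ is close to $(u_*)^*(x_0)$ as a convex combination of an interior point $x_\epsilon$ where $u$ is close to $u_*(x_0)$ and a boundary point $z_\epsilon$, and applying Jensen). Your sketch omits this second pass entirely.

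Second, the claim that the concave paraboloid ``pushes the maximiser toward $\partial\Omega$'' requires the convexity of $u$, which you never invoke. For a merely u.s.c. and bounded $u$, no choice of $M$ guarantees the maximiser of $u-\phi$ lands on $\partial\Omega$. What makes it work here is Lemma~\ref{lem:subConvex}: sub-solutions of the convexified operator are convex, so $u-\phi$ is (strictly) convex when $\phi$ is strictly concave, and a convex function on a compact convex set attains its maximum at an extreme point. The paper makes this explicit by first maximising $u-P$ with $P$ \emph{affine} over $\bar B(x_0,\epsilon)\cap\bar\Omega$, showing via convexity of $u$ that the maximiser lies on $B\cap\partial\Omega$, and only then adding a small concave correction $-(x-z)\cdot n(z)-\beta\abs{x-z}^2$, which is nonnegative on $\bar\Omega$ thanks to uniform convexity, so the touch-point is preserved while $D^2\phi(z)=-2\beta I<0$. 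As a secondary point, the placement of your auxiliary point $x^\ast$ ``just outside $\bar\Omega$ along the inward normal'' is self-contradictory, and with $\phi(x)=-M\abs{x-x^\ast}^2+c$ and $x^\ast$ close to $y$ outside the domain, $\phi$ is largest (not smallest) near $y$, so $u-\phi$ would be \emph{penalised}, not favoured, near $y$; the tilt in the paper's $P$ goes in the opposite direction.
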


\begin{proof}
Choose any $x_0\in\partial\Omega$ and small $\epsilon>0$.  Since $\Omega$ is convex, there exists a supporting hyperplane to the domain at $x_0$.  We  let $n(x_0)$ be the unit outward normal to any such hyperplane.  Since $\Omega$ is uniformly convex, there exists some $\alpha>0$ such that for any $x\in\bar{\Omega}$ with $\abs{x-x_0}$ sufficiently small,
\[ -n(x_0)\cdot(x-x_0) \geq \alpha\abs{x-x_0}^2. \]

Denote by $B$ the open ball $B(x_0,\epsilon)$.  For any $x\in\partial B\cap\bar{\Omega}$ and sufficiently large $\gamma>0$,
\[ P(x) \equiv u(x_0)-\gamma n(x_0)\cdot(x-x_0) \geq u(x_0) + \gamma\alpha\epsilon^2>\max\limits_{\partial B\cap\bar{\Omega}}u. \]

Since $u$ is upper semi-continuous, there exists some
\[ z \in \argmax\limits_{\bar{B}\cap\bar{\Omega}}\{u-P\}. \]
We note that $u-P<0$ on $\partial B$ and $u(x_0)-P(x_0) = 0$.  Thus $z\notin\partial B$ and $u-P$ has a local maximum at $z$.

Consider any $x\in B\cap\Omega$.  As the intersection of two convex sets, $B\cap\Omega$ is also convex.  Since $x$ is in the interior of this convex set, it can be expressed as $\lambda_1 x_1 + \lambda_2 x_2$ for some $x_1\in\partial B\cap\Omega$, $ x_2 \in B\cap\Omega$ and $\lambda_1, \lambda_2 > 0$ with $\lambda_1+\lambda_2 = 1$.  Since $u$ is convex (Lemma~\ref{lem:subConvex}) and $P$ is affine, we can calculate
\begin{align*}
u(x)-P(x) &= u(\lambda_1 x_1 + \lambda_2 x_2) - P(\lambda_1 x_1 + \lambda_2 x_2)\\
 &\leq \lambda_1(u(x_1)-P(x_1)) + \lambda_2(u(x_2)-P(x_2))\\
 &< u(z)-P(z).
\end{align*}
Therefore $z\in B\cap\partial\Omega$.

As $\Omega$ is uniformly convex, there exists $\beta>0$ such that whenever $x\in\bar{\Omega}$,
\[ (x-z)\cdot n(z) \leq -\beta\abs{x-z}^2. \]

Define the test function
\[ \phi(x) = P(x)-(x-z)\cdot n(z) - \beta\abs{x-z}^2 \in C^2. \]
We notice that
\[ u(x)-\phi(x) \leq u(x)-P(x) \leq u(z)-P(z) = u(z)-\phi(z). \]
Thus $u-\phi$ has a local maximum at $z$.  Since $u$ is a sub-solution, this requires
\[ F_*(z,u(z),\nabla\phi(z),D^2\phi(z)) \leq 0. \]

However, by construction, $\lambda_1(D^2\phi(z)) = -2\beta < 0$ so that
\[ \max\{-{\det}^+(D^2\phi(z)) + \kappa(z)R(\nabla\phi(z)),-\lambda_1(D^2\phi(z))\} > 0. \]
Since $u$ is a sub-solution, we require
\[ u(z)-g(z) \leq 0.\]

We have shown that for any $\epsilon>0$, there exists some $z\in B(x_0,\epsilon)\cap\partial\Omega$ such that $u(z) \leq g(z)$.  Since $g$ is continuous, we conclude that $u_*(x_0) \leq g(x_0)$ for $x_0\in\partial\Omega$.

Since $u$ is convex, $u_* = u = u^*$ in $\Omega$ and $u_*$ is convex on $\bar{\Omega}$.  
Consider some $x_0\in\partial\Omega$.  For any $\epsilon>0$, there exists some $x_\epsilon\in\Omega$ such that $x_\epsilon\to x_0$ and 
\[ u(x_\epsilon) \leq u_*(x_0) + \epsilon \leq g(x_0)+\epsilon. \]
In addition, there exists some $y_\epsilon\in B(x_0,\abs{x_0-x_\epsilon})\cap\Omega$ such that
\[ u(y_\epsilon) \geq (u_*)^*(x_0)-\epsilon. \]
Finally, we can define $z_\epsilon\in B(x_0,\abs{x_0-x_\epsilon})\cap\partial\Omega$ such that for some $\lambda_1,\lambda_2\geq 0$ with $\lambda_1+\lambda_2=1$,  $y_\epsilon = \lambda_1x_\epsilon + \lambda_2z_\epsilon$.  Then we can compute
\[ (u_*)^*(x_0)-\epsilon \leq u(y_\epsilon) \leq \lambda_1u(x_\epsilon)+\lambda_2u_*(z_\epsilon) \leq \lambda_1\left(g(x_0)+\epsilon\right) + \lambda_2g(z_\epsilon). \]

Taking $\epsilon\to0$ we obtain
\[ (u_*)^*(x_0) \leq g(x_0).  \]
\end{proof}

\begin{lem}[Behaviour of super-solutions at boundary]\label{lem:superBC}
Let $u$ be a lower semi-continuous super-solution of~\eqref{eq:MA} with data satisfying Hypothesis~\ref{hyp}.  Then at each $x_0\in\partial\Omega$, either $u(x_0) \geq g(x_0)$ or the subgradient $\partial u(x_0)$ is empty.
\end{lem}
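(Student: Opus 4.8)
The plan is to establish the contrapositive: assuming $\partial u(x_0)$ is nonempty, fix $q\in\partial u(x_0)$, so that $u(x)\ge u(x_0)+q\cdot(x-x_0)$ for every $x\in\bar\Omega$ near $x_0$, and deduce $u(x_0)\ge g(x_0)$. The idea is to build a smooth test function $\phi$ touching $u$ from below at $x_0$ with $D^2\phi(x_0)>0$. For such a $\phi$, the ``interior part'' of the boundary operator, $\max\{-{\det}^+(D^2\phi(x_0))+\kappa(x_0)R(\nabla\phi(x_0)),\,-\lambda_1(D^2\phi(x_0))\}$, can be forced to be strictly negative, so the super-solution inequality $F^*(x_0,u(x_0),\nabla\phi(x_0),D^2\phi(x_0))\ge0$ leaves no option but $u(x_0)-g(x_0)\ge0$.

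First I would fix a unit outer normal $n=n(x_0)$ to a supporting hyperplane of $\Omega$ at $x_0$, and, as in the proof of Lemma~\ref{lem:subBC}, use the uniform convexity of $\Omega$ (Hypothesis~\ref{hyp}) to get $\alpha,r_0>0$ with $-n\cdot(x-x_0)\ge\alpha\abs{x-x_0}^2$ for all $x\in\bar\Omega$, $\abs{x-x_0}<r_0$. For parameters $\mu,A>0$ I would take the quadratic test function
\[
\phi(x)=u(x_0)+(q+\mu n)\cdot(x-x_0)+\tfrac12\abs{x-x_0}^2+\tfrac{A-1}{2}\big(n\cdot(x-x_0)\big)^2\in C^2(\bar\Omega),
\]
so that $\nabla\phi(x_0)=q+\mu n$, while $D^2\phi(x_0)$ has eigenvalue $A$ along $n$ and eigenvalue $1$ in the directions orthogonal to $n$; hence, for $A\ge1$, $\lambda_1(D^2\phi(x_0))=1>0$ and ${\det}^+(D^2\phi(x_0))=A$.

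The main step is to choose $\mu$ and a radius $r\le r_0$ so that $u-\phi$ has a local minimum at $x_0$. Writing $s=n\cdot(x-x_0)$ and using $\abs{x-x_0}^2\le-s/\alpha$ on $\bar\Omega\cap B(x_0,r_0)$,
\[
\phi(x)-\big(u(x_0)+q\cdot(x-x_0)\big)=\mu s+\tfrac12\abs{x-x_0}^2+\tfrac{A-1}{2}s^2\le\Big(\mu-\tfrac1{2\alpha}\Big)s+\tfrac{A}{2}s^2 .
\]
Taking $\mu=\tfrac1{2\alpha}+1$ reduces the right-hand side to $s\big(1+\tfrac A2 s\big)\le0$ whenever $\abs{s}\le2/A$; since $\abs{s}\le\abs{x-x_0}$, choosing $r=\min\{r_0,2/A\}$ (shrunk if necessary so that the subgradient inequality holds on $\bar\Omega\cap B(x_0,r)$) gives $\phi\le u(x_0)+q\cdot(x-x_0)\le u$ there, with equality at $x_0$, so $u-\phi$ has a local minimum at $x_0$. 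Crucially, the constraint on $\mu$ involves only the \emph{tangential} curvature of $\phi$, not the normal curvature $A$, so $\nabla\phi(x_0)=q+\mu n$ stays bounded independently of $A$. With $\mu$ fixed I then take $A>\max\{1,\kappa(x_0)R(q+\mu n)\}$, and the super-solution property yields
\[
0\le F^*\big(x_0,u(x_0),q+\mu n,D^2\phi(x_0)\big)=\max\Big\{u(x_0)-g(x_0),\ \max\{-A+\kappa(x_0)R(q+\mu n),\,-1\}\Big\}.
\]
Since the inner maximum is negative by the choice of $A$, we conclude $u(x_0)\ge g(x_0)$.

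I expect the construction of $\phi$ to be the main obstacle. An isotropic bump $\tfrac A2\abs{x-x_0}^2$ would force the outward tilt to scale like $\mu\sim A$, making $R(\nabla\phi(x_0))$ grow like $A^{n+2}$ while ${\det}^+(D^2\phi(x_0))$ only grows like $A^n$, so the interior part of $F^*$ could never be driven negative. Concentrating the large curvature of $\phi$ in the normal direction — where $\Omega$ does not extend, so no outward tilt is needed to compensate for it — while keeping the tangential curvature order one, decouples the two requirements and is the key idea.
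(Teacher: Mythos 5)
Your proof is correct and takes essentially the same route as the paper: you build a quadratic barrier touching $u$ from below at $x_0$ whose normal curvature can be made arbitrarily large while the gradient stays bounded, then let the large normal curvature kill the PDE branch of $F^*$, forcing $u(x_0)\ge g(x_0)$. The paper's test function uses tilt coefficient $1$ and tangential curvature $\alpha/2$ where yours uses tilt $\mu=\tfrac1{2\alpha}+1$ and tangential curvature $\tfrac12$, which is the same construction with slightly different bookkeeping; the decoupling observation you emphasize at the end is exactly what makes the paper's choice work as well.
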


\begin{proof}
Let $x_0\in\partial\Omega$ and suppose that both $u(x_0) < g(x_0)$ and there exists some $p\in\partial u(x_0)$.  Consider any supporting hyperplane to the domain at $x_0$ and let $n$ be the unit outward normal to this hyperplane.  
Since $\Omega$ is uniformly convex, there exists some constant $\alpha>0$ such that for small enough $\abs{x-x_0}$ with $x\in\bar{\Omega}$,
\[ (x-x_0)\cdot n \leq -\alpha\abs{x-x_0}^2. \]

Now we choose any $\gamma > 0$ and consider the test function
\[ \phi(x) = u(x_0) + p\cdot(x-x_0) +  (x-x_0)\cdot n + \frac{\alpha}{2}\abs{x-x_0}^2 + \frac{\gamma}{2}\left((x-x_0)\cdot n\right)^2.  \]
By the definition of $p$, we have
\[ u(x_0) + p\cdot(x-x_0) \leq u(x). \]
From the definition of $\alpha$ we know that
\[ \frac{1}{2} (x-x_0)\cdot n + \frac{\alpha}{2}\abs{x-x_0}^2 \leq 0. \]
Finally, as long as $\abs{x-x_0} < 1/\gamma$ we have
\[  \frac{1}{2} (x-x_0)\cdot n + \frac{\gamma}{2}\left((x-x_0)\cdot n(x_0)\right)^2 \leq 0.\]
Putting these results together, we obtain
\[ \phi(x) \leq u(x) \]
near $x_0$, with $\phi(x_0) = u(x_0)$.  Thus $u-\phi$ has a local minimum at $x_0$.

We also note that $\phi\in C^2$ and
\begin{align*}
\nabla\phi(x_0) &= p + n,\\
D^2\phi(x_0) &= \alpha I + \gamma nn^T > 0.
\end{align*}

Then for sufficiently large $\gamma$:
\begin{align*}
-{\det}^+(D^2\phi(x_0)) + \kappa(x_0)R(\nabla\phi(x_0)) = -\det(\alpha I + \gamma nn^T) + \kappa(x_0)R(p+ n)&<0,\\
-\lambda_1(D^2\phi(x_0))&<0,\\
u(x_0)-g(x_0) &<0
\end{align*}
so that
\[ F^*(x_0,u(x_0),\nabla\phi(x_0),D^2\phi(x_0))<0, \]
which contradicts the fact that $u$ is a super-solution.
\end{proof}

As an immediate consequence of Lemmas~\ref{lem:subBC}-\ref{lem:superBC} and Definition~\ref{def:viscosity}, we obtain the sense in which a viscosity solution satisfies the Dirichlet boundary conditions.

\begin{cor}[Boundary conditions for viscosity solutions]\label{cor:viscBC}
Let $u$ be a viscosity solution of~\eqref{eq:MA} with data satisfying Hypothesis~\ref{hyp}.  Then at every $x_0\in\partial\Omega$ either $u_*(x_0) = u^*(x_0) = g(x_0)$ or $u_*(x_0)\leq u^*(x_0)\leq g(x_0)$ with $\partial u_*(x_0)$ empty.
\end{cor}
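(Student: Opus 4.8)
The plan is to obtain the corollary as a direct consequence of the two boundary lemmas together with the two–envelope definition of a viscosity solution. By Definition~\ref{def:viscosity}, the upper envelope $u^*$ is a sub-solution of~\eqref{eq:MA} and the lower envelope $u_*$ is a super-solution, and both inherit the data from Hypothesis~\ref{hyp}. So the strategy is: (i) use Lemma~\ref{lem:subBC} to pin $u^*$, and hence $u_*$, below $g$ on $\partial\Omega$; (ii) use Lemma~\ref{lem:superBC} to get the dichotomy between $u_*(x_0)\ge g(x_0)$ and $\partial u_*(x_0)$ empty; (iii) intersect the two conclusions.

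For step (i), Lemma~\ref{lem:subBC} applied to the sub-solution $u^*$ yields $((u^*)_*)^*\le g$ on $\partial\Omega$, and what remains is to recover the cleaner bound $u^*\le g$ on $\partial\Omega$ from this iterated-envelope statement. Here I would invoke that $u^*$ is convex in $\Omega$ by Lemma~\ref{lem:subConvex}, hence continuous there, so that $(u^*)_*=u^*$ on $\Omega$; combining this with the upper semicontinuity of $u^*$ at a boundary point $x_0$ and the fact that the $\limsup$ defining $u^*(x_0)$ runs over $y\in\Omega$, one gets
\[ u^*(x_0)=\limsup_{y\to x_0,\,y\in\Omega}u^*(y)\le ((u^*)_*)^*(x_0)\le g(x_0). \]
Since $u_*\le u^*$ pointwise by construction, this gives the chain $u_*(x_0)\le u^*(x_0)\le g(x_0)$ for every $x_0\in\partial\Omega$.

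For steps (ii)--(iii), apply Lemma~\ref{lem:superBC} to the super-solution $u_*$: at each $x_0\in\partial\Omega$ either $u_*(x_0)\ge g(x_0)$ or $\partial u_*(x_0)$ is empty. In the first case, combining $u_*(x_0)\ge g(x_0)$ with the sandwich $u_*(x_0)\le u^*(x_0)\le g(x_0)$ from step (i) forces $u_*(x_0)=u^*(x_0)=g(x_0)$; in the second case we simply record $u_*(x_0)\le u^*(x_0)\le g(x_0)$ with $\partial u_*(x_0)$ empty. These are precisely the two alternatives in the statement.

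I expect the only non-cosmetic point to be the envelope bookkeeping in step (i): the passage from $((u^*)_*)^*\le g$ back to $u^*\le g$ is \emph{not} valid for a generic upper semicontinuous function — a one-point spike above a continuous function is annihilated by taking the lower envelope and then the upper envelope — so it is essential to use the interior convexity (and hence continuity) of $u^*$ to guarantee that the envelope operations, evaluated at a boundary point, still "see" the correct interior values. Everything else is an immediate combination of the cited results.
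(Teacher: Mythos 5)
Your argument is correct and is essentially the proof the paper omits as an ``immediate consequence'' of Lemmas~\ref{lem:subBC}--\ref{lem:superBC} and Definition~\ref{def:viscosity}. The envelope bookkeeping you flag in step~(i) is indeed the only non-automatic point, and your appeal to the interior convexity (hence continuity) of $u^*$ via Lemma~\ref{lem:subConvex}, so that $(u^*)_*=u^*$ on $\Omega$ and the $\limsup$ defining $u^*$ at a boundary point is dominated by $((u^*)_*)^*$, resolves it correctly.
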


\subsection{Existence of a viscosity solution}\label{sec:existence}

Next we begin to establish the well-posedness of this viscosity formulation.    This section culminates in Theorem~\ref{thm:exist}, which demonstrates that at least one viscosity solution exists---in particular, the generalised solution is a viscosity solution.

We start by describing the Perron method for constructing a viscosity solution, which satisfies the Dirichlet boundary conditions in the weak sense and need not be continuous.  

\begin{thm}[Perron construction of viscosity solution]\label{thm:perron}
Assume that $\Omega$, $g$, and $\kappa$ satisfy Hypothesis~\ref{hyp}.  If $u_1$ is an upper semi-continuous sub-solution and $u_2$ a lower semi-continuous super-solution with $u_1 \leq u_2$ on $\bar{\Omega}$ then
\[ w(x) = \sup\{W:\Omega\to\R \mid u_1 \leq W \leq u_2, \, W^* \text{ is a sub-solution}\} \]
is a viscosity solution of~\eqref{eq:MA}.
\end{thm}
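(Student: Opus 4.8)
The statement is precisely the conclusion of Ishii's version of the Perron method, so the plan is to adapt that classical argument (cf.~\cite{CIL}) to the present operator, keeping track of three features: the operator is degenerate elliptic (Theorem~\ref{thm:elliptic}); the Dirichlet condition is encoded in the two envelopes $F_*$, $F^*$ of~\eqref{eq:MAlower}--\eqref{eq:MAupper}, which are discontinuous across $\partial\Omega$; and sub-solutions are automatically convex (Lemma~\ref{lem:subConvex}), so no convexity has to be re-imposed. The proof splits into three parts: (i)~$w$ is well defined with $u_1\le w\le u_2$; (ii)~$w^*$ is a sub-solution in the sense of Definition~\ref{def:subsuperWeak}; (iii)~$w_*$ is a super-solution. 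Parts~(i) and~(ii) are essentially bookkeeping; part~(iii), proved by the usual ``bump-up'' contradiction, carries the real content, and then $w$ is a viscosity solution by Definition~\ref{def:viscosity}.

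For~(i), the admissible set is nonempty since $u_1$ lies in it ($u_1\le u_1\le u_2$, and $u_1=u_1^*$ is a sub-solution by hypothesis), and every admissible $W$ satisfies $W\le u_2$; the pointwise supremum then gives a bounded $w$ with $u_1\le w\le u_2$ on $\bar\Omega$. For~(ii), let $\psi\in C^2(\bar\Omega)$ with $w^*-\psi$ attaining a local maximum at some $x_0\in\bar\Omega$; after the standard normalization ($w^*(x_0)=\psi(x_0)$, the maximum made strict over a fixed closed ball $\bar B(x_0,r_0)$ by adding $\abs{x-x_0}^2$ to $\psi$) choose $x_n\to x_0$ with $w(x_n)\to w^*(x_0)$ and admissible $W_n$ with $W_n(x_n)>w(x_n)-1/n$. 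Since $W_n\le w$ pointwise we have $W_n^*\le w^*$, and the upper semicontinuous function $W_n^*-\psi$ attains its maximum over $\bar B(x_0,r_0)\cap\bar\Omega$ at some $z_n$; comparing the values at $x_n$ and $z_n$, and using strictness of the maximum of $w^*-\psi$ together with upper semicontinuity of $w^*$, forces $z_n\to x_0$ and $W_n^*(z_n)\to w^*(x_0)$. For large $n$ the point $z_n$ is a local maximum of the sub-solution $W_n^*-\psi$ in $\bar\Omega$, so $F_*\bigl(z_n,W_n^*(z_n),\nabla\psi(z_n),D^2\psi(z_n)\bigr)\le0$; passing to the limit and using that $F_*$ is lower semicontinuous on $\bar\Omega\times\R\times\R^n\times\Sf^n$ (the boundary prescription only takes a $\min$ with $z-g(x)$, so it cannot raise the value relative to the interior expression) yields $F_*\bigl(x_0,w^*(x_0),\nabla\psi(x_0),D^2\psi(x_0)\bigr)\le0$. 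Hence $w^*$ is a sub-solution (and, by Lemma~\ref{lem:subConvex}, convex).

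For~(iii), suppose $w_*$ is not a super-solution: there is $\phi\in C^2(\bar\Omega)$ with $w_*-\phi$ having a local minimum at some $x_0\in\bar\Omega$ and $F^*\bigl(x_0,w_*(x_0),\nabla\phi(x_0),D^2\phi(x_0)\bigr)<0$. By the remark following Definition~\ref{def:subsuperWeak} we may take $D^2\phi(x_0)>0$, and replacing $\phi$ by $\phi-\abs{x-x_0}^4$ (which leaves $\nabla\phi(x_0)$, $D^2\phi(x_0)$ and $\phi(x_0)$ unchanged) we may assume the minimum is strict with $w_*(x_0)=\phi(x_0)$. First, $w_*(x_0)<u_2(x_0)$: otherwise $w_*\le u_2$ with equality at $x_0$ forces $u_2-\phi$ to have a local minimum at $x_0$, contradicting that $u_2$ is a super-solution. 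Choose $\delta>0$ with $\phi(x_0)+\delta<u_2(x_0)$ and a radius $r$ so small that on $B=B(x_0,r)$ both $\phi+\delta<u_2$ (lower semicontinuity of $u_2$) and $\phi+\delta$ is a strict classical sub-solution of the $F_*$--equation---this uses continuity of the interior operator and, at boundary points near $x_0$, the fact that $F^*<0$ already forces $\phi-g<0$ there. Strictness of the minimum of $w_*-\phi$ with lower semicontinuity of $w_*$ gives $w_*>\phi+\delta$ in some neighbourhood of $\partial B\cap\bar\Omega$, hence $w=\max\{w,\phi+\delta\}$ there. Define $W=\max\{w,\phi+\delta\}$ on $B\cap\bar\Omega$ and $W=w$ elsewhere; then $u_1\le W\le u_2$ and, on the open set $B\cap\bar\Omega$, $W^*=\max\{w^*,\phi+\delta\}$, a maximum of two sub-solutions, while on the complementary relatively-open set $W^*=w^*$, so $W^*$ is a sub-solution of~\eqref{eq:MA} and $W$ is admissible. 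But for $n$ large $W(x_n)\ge\phi(x_n)+\delta\to\phi(x_0)+\delta>\phi(x_0)=w_*(x_0)=\lim_n w(x_n)$, so $W(x_n)>w(x_n)$, contradicting $w=\sup$. Hence $w_*$ is a super-solution.

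The main obstacle is the discontinuity of the operator across $\partial\Omega$: one must verify the precise one-sided semicontinuity ($F_*$ lower, $F^*$ upper semicontinuous there) so the Perron limiting inequalities survive at boundary points, and the bump-up at a boundary point must respect the restricted super-solution test class $D^2\phi(x_0)\ge0$---which is why $\phi$ is perturbed by a quartic rather than the usual quadratic. A secondary issue, disposed of for free by Lemma~\ref{lem:subConvex}, is that each iterate ($w^*$ and the bumped $W^*$) is automatically convex, so the convexity constraint never has to be tracked by hand.
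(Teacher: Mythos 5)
Your argument is correct and follows essentially the same route as the paper: Perron's method over the admissible set $\{W : u_1 \le W \le u_2, \; W^*\text{ a sub-solution}\}$, with the bump-up contradiction carrying the weight of the super-solution step and Lemma~\ref{lem:subConvex} dispensing with the convexity bookkeeping. The two presentations differ only in organization: the paper splits the super-solution verification into three explicit cases ($w_* = u_2$, interior, boundary) and builds the bump as $\phi(x) + \epsilon - \tfrac{\gamma}{2}\abs{x-x_0}^2$, whereas you rule out $w_*(x_0) = u_2(x_0)$ once, normalize $\phi$ by a quartic, and then use a constant $\delta$-shift. Either device plays the same role (keeping the bump strictly convex, a classical strict sub-solution of the interior operator, and strictly below $w$ near $\partial B$). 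You also correctly identify why the boundary case is benign: $F^* < 0$ at $x_0\in\partial\Omega$ already forces $\phi(x_0) - g(x_0) < 0$, which is what keeps the bumped function a sub-solution on $\partial\Omega$.

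One small slip in part~(ii): making the maximum of $w^* - \psi$ strict by adding $\abs{x-x_0}^2$ to $\psi$ changes $D^2\psi(x_0)$ to $D^2\psi(x_0) + 2I$; since $F_*$ is non-increasing in its matrix argument, the inequality you obtain, $F_*\bigl(x_0, w^*(x_0), \nabla\psi(x_0), D^2\psi(x_0) + 2I\bigr)\le 0$, does not yield $F_*\bigl(x_0, w^*(x_0), \nabla\psi(x_0), D^2\psi(x_0)\bigr)\le 0$. Use $\abs{x-x_0}^4$ here as well, as you do (for exactly this reason) in part~(iii), or scale by $\epsilon\abs{x-x_0}^2$ and send $\epsilon\to 0$ using continuity of the interior operator in $X$; the stability-of-maxima argument then goes through unchanged.
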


The result is technical and essentially equivalent to~\cite[Theorem~4.1]{BardiMannucci}.  To keep the key contributions of this section clear, we postpone the proof to Appendix~\ref{app:perron}.

A simple consequence of this technique is the fact that the maximal sub-solution is a viscosity solution.

\begin{cor}[Maximal sub-solution]\label{cor:maxSub}
Let $\Omega$, $g$, and $\kappa$ satisfy Hypothesis~\ref{hyp}.  Suppose that the set
\[ U = \{u:\Omega\to\R\mid u^* \text{ is a sub-solution of~\eqref{eq:MA}}\} \]
is non-empty.  Then
\[ w = \sup\{u\in U\}  \]
is a viscosity solution of~\eqref{eq:MA}.
\end{cor}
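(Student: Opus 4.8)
The plan is to obtain $w$ as the output of the Perron construction in Theorem~\ref{thm:perron}, so the first task is to produce a matching pair of barriers. Since $U$ is non-empty, I would fix some $u_0 \in U$ and set $u_1 = (u_0)^*$, which is an upper semi-continuous sub-solution — hence convex by Lemma~\ref{lem:subConvex} and bounded by Definition~\ref{def:subsuperWeak} — and take as the upper barrier the constant function $u_2 \equiv M$ with $M = \max_{\partial\Omega} g$. A short direct verification shows $u_2$ is a super-solution: at an interior point where $u_2 - \phi$ has a local minimum with $D^2\phi \ge 0$, the constancy of $u_2$ forces $\phi$ to have a local maximum there, so $\nabla\phi = 0$ and $D^2\phi \le 0$, whence $D^2\phi = 0$ and $F_* = \max\{\kappa(x)R(0), 0\} = \kappa(x) \ge 0$; at a boundary point the term $z - g(x) = M - g(x) \ge 0$ makes $F^* \ge 0$ automatically. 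One also checks $u_1 \le u_2$ on $\bar\Omega$: every sub-solution is convex and, by Lemma~\ref{lem:subBC}, has boundary limsup bounded above by $g$, so on the bounded convex domain its supremum is at most $M$; in particular $u_1 \le M$. Theorem~\ref{thm:perron} then applies and shows
\[ v = \sup\{W : \Omega \to \R \mid u_1 \le W \le u_2, \ W^* \text{ is a sub-solution}\} \]
is a viscosity solution of~\eqref{eq:MA}.

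The remaining work is to show $w = v$. The inequality $v \le w$ is immediate, because every $W$ admissible for $v$ lies in $U$. For $w \le v$, given any $u \in U$ I would replace it by $\tilde u = \max\{u, u_1\}$: then $u_1 \le \tilde u$ by construction, and $\tilde u \le M = u_2$ by the uniform upper bound on sub-solutions just noted (applied to $u^*$, and using $u_1 \le M$). Moreover $\tilde u^* = \max\{u^*, u_1\}$ is again a sub-solution, since the pointwise maximum of two viscosity sub-solutions of a degenerate elliptic equation is a sub-solution — if $\max\{u^*, u_1\} - \phi$ has a local maximum at $x_0$ and equals, say, $u^*(x_0)$ there, then $u^* - \phi$ has a local maximum at $x_0$ too and the sub-solution inequality for $u^*$ transfers verbatim, including at boundary points for $F_*$. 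Hence $\tilde u$ is admissible for $v$, so $u \le \tilde u \le v$; taking the supremum over $u \in U$ gives $w \le v$. Combining, $w = v$ is a viscosity solution.

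The main obstacle is precisely this last identification: one must be sure the unconstrained supremum defining $w$ agrees with the supremum over the order interval $[u_1, u_2]$ used in Perron's theorem. This rests on two non-deep but essential facts — that sub-solutions are uniformly bounded above on $\Omega$ (from convexity, Lemma~\ref{lem:subConvex}, together with the boundary estimate of Lemma~\ref{lem:subBC}), and that the family of sub-solutions is stable under finite maxima. The verification that a sufficiently large constant is a super-solution, and the check $u_1 \le u_2$ on $\bar\Omega$, are routine consequences of Hypothesis~\ref{hyp}.
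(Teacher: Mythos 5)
Your proof is correct and follows essentially the same route as the paper: fix a sub-solution from $U$ as the lower barrier, take a large constant (above $\max_{\partial\Omega} g$) as the super-solution upper barrier, invoke Theorem~\ref{thm:perron}, and identify the Perron supremum with $w$. You supply two details the paper leaves implicit — the explicit verification that the constant is a super-solution and the use of stability of sub-solutions under pointwise maxima to equate the constrained and unconstrained suprema — and both are carried out correctly.
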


\begin{proof}
Choose any $u\in U$ and let $v = c$ be a constant function with $c \geq \sup g$.  Clearly $u \leq v$ since all sub-solutions are convex and lie below the Dirichlet data (Lemma~\ref{lem:subConvex}).  We claim that $v$ is a super-solution since it lies above the Dirichlet data and, in the interior, there are no admissible test functions satisfying $D^2\phi(x_0)>0$.  Then by Perron's method (Theorem~\ref{thm:perron}),
\begin{align*} w(x) &\equiv \sup\{W\mid u \leq W \leq v, W^* \text{ is a sub-solution} \}\\ &= \sup\{W \mid W^* \text{ is a sub-solution}\} \end{align*}
is a viscosity solution.
\end{proof}

\begin{thm}[Existence of viscosity solution]\label{thm:exist}
Under Hypothesis~\ref{hyp}, the generalised solution of~\eqref{eq:MA} is also a viscosity solution of~\eqref{eq:MA}.
\end{thm}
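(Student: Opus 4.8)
The plan is to verify directly that the generalised solution $u$ from Lemma~\ref{lem:exist} is both a viscosity sub-solution and a viscosity super-solution of~\eqref{eq:MA} in the sense of Definitions~\ref{def:subsuperWeak} and~\ref{def:viscosity}. Since $u$ is convex on $\Omega$, it is continuous there, so $u_* = u = u^*$ in the interior and the only subtlety lies at $\partial\Omega$. I would handle the interior and the boundary separately.

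\emph{Interior.} In $\Omega$, the operators $F_*$ and $F^*$ both reduce to the convexified operator~\eqref{eq:MAconvex}. By Theorem~\ref{thm:equivalence1}, a convex function is a viscosity solution of the prescribed Gaussian curvature equation~\eqref{eq:curvature} if and only if it is a generalised solution; and by Theorem~\ref{thm:equivalence}, convex viscosity solutions of~\eqref{eq:PDE} coincide with viscosity solutions of~\eqref{eq:MAconvex}. So the generalised solution is automatically a viscosity sub- and super-solution of~\eqref{eq:MAconvex} at every interior point, using only the test functions supported in $\Omega$.

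\emph{Boundary.} Here one must check the modified operators $F_*$ and $F^*$ for test functions $\phi\in C^2(\bar\Omega)$ such that $u^* - \phi$ has a local max (resp. $u_* - \phi$ a local min) at some $x_0\in\partial\Omega$. For the sub-solution property: by the weak boundary condition~\eqref{eq:weakBC} in Definition~\ref{def:weakDirichlet}, $\limsup_{x\to y}u(x)\le g(y)$, so $u^*(x_0)\le g(x_0)$, hence the term $z - g(x)$ in~\eqref{eq:MAlower} is $\le 0$ at $x_0$ and $F_*\le 0$ there regardless of the PDE term. For the super-solution property at $x_0\in\partial\Omega$: following the structure of Lemma~\ref{lem:superBC}, if $u_*(x_0)\ge g(x_0)$ then the $\max$ in~\eqref{eq:MAupper} is controlled by the interior PDE term, which is handled as in the interior case using the convexity of $u$ and its generalised-solution property; if instead $u_*(x_0) < g(x_0)$, then the same barrier construction as in Lemma~\ref{lem:superBC} shows the subgradient $\partial u_*(x_0)$ is empty, so there is no admissible $C^2$ test function touching $u_*$ from below at $x_0$ and there is nothing to verify (one uses that $D^2\phi(x_0)\ge 0$ forces $\nabla\phi(x_0)\in\partial u_*(x_0)$).

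The main obstacle I expect is the super-solution check at the boundary when $u_*(x_0)\ge g(x_0)$ exactly: there one needs that the generalised solution still satisfies the PDE inequality $F^*\ge 0$ against boundary-touching test functions, which requires propagating the interior generalised-solution information (via the subgradient measure in Definition~\ref{def:generalised}) to $\partial\Omega$, plus care that the restriction $D^2\phi(x_0)\ge 0$ in Definition~\ref{def:subsuperWeak} is exactly what makes the reduction to the unconvexified operator~\eqref{eq:MAop} legitimate there. A clean way around this is to invoke Corollary~\ref{cor:maxSub}: the generalised solution coincides with the maximal sub-solution $w = \sup\{u : u^*\text{ is a sub-solution}\}$ — which one checks using the weak Dirichlet characterisation in Definition~\ref{def:weakDirichlet} together with Theorem~\ref{thm:equivalence1} — and Corollary~\ref{cor:maxSub} then delivers directly that $w$ is a viscosity solution.
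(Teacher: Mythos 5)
Your ``clean way around'' is in fact the paper's argument: show that $u^*$ is a viscosity sub-solution (interior by Theorems~\ref{thm:equivalence1} and~\ref{thm:equivalence}, boundary because $u^*\le g$ makes $F_*\le 0$ trivially), invoke Corollary~\ref{cor:maxSub} to produce a maximal sub-solution $w$ that is a viscosity solution with $u\le w$, then use the equivalence theorems once more to see that $w$ is a generalised solution satisfying $w\le g$ on $\partial\Omega$, so that $w\le u$ by the maximality of the generalised solution in Definition~\ref{def:weakDirichlet}. So your final route and the paper's coincide, modulo spelling out that last comparison $w\le u$.

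The direct route you sketch first, however, has a genuine gap in the boundary super-solution check when $u_*(x_0)<g(x_0)$. You argue that ``the same barrier construction as in Lemma~\ref{lem:superBC} shows the subgradient $\partial u_*(x_0)$ is empty,'' but Lemma~\ref{lem:superBC} \emph{assumes} $u$ is already a viscosity super-solution and derives the emptiness of the subgradient as a consequence; using it here to \emph{establish} the super-solution property is circular. What you actually need is the independent fact that a generalised solution satisfying the weak Dirichlet condition has, at every boundary point, either $u_*=g$ or empty subgradient --- this is a property of generalised solutions in their own right (the paper invokes it via the Urbas reference inside the proof of Lemma~\ref{thm:viscToAleks} in the appendix), not a consequence of Lemma~\ref{lem:superBC}. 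Also, a small slip in your Case~A: when $u_*(x_0)\ge g(x_0)$ the super-solution inequality $F^*\ge 0$ is satisfied \emph{immediately} because the term $u_*(x_0)-g(x_0)\ge 0$ dominates the max; there is no need to ``control'' anything via the interior PDE term. Neither of these affects your conclusion, since your fallback bypasses the direct boundary check entirely, but as written the direct attempt does not stand on its own.
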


\begin{proof}
Let $u$ be the generalised solution of the Dirichlet problem.  By Theorems~\ref{thm:equivalence1} and~\ref{thm:equivalence}, $u$ is  a viscosity solution in~$\Omega$.  Since additionally $\limsup u \leq g$ on $\partial\Omega$, $u^*$ is a viscosity sub-solution of the Dirichlet problem.

The existence of a viscosity sub-solution ensures the existence of a maximal sub-solution~$w$ by Corollary~\ref{cor:maxSub}, with $u \leq w$.  

Again by Theorems~\ref{thm:equivalence1} and~\ref{thm:equivalence}, $w$ is a generalised solution in $\Omega$.  Since $w \leq g$ on $\partial\Omega$ and $u$ is the maximal such generalised solution, we must have $w \leq u$.

We conclude that the generalised solution to the Dirichlet problem is also a viscosity solution of the Dirichlet problem.
\end{proof}

\subsection{Uniqueness and comparison}\label{sec:comparison}

Next, we need to demonstrate that this viscosity solution is unique.  
We first note that the condition
\[ \int_\Omega\kappa(x)\,dx  < \int_{\R^n} (1+\abs{p}^2)^{-(n+2)/2}
 \]
is necessary for the uniqueness of the viscosity solution.  To see why, we return to our earlier one-dimensional example, posed on a larger domain.

\begin{examp}
\bq\label{eq:1d_2}
F(x,u,u_x,u_{xx}) 
=
\begin{cases}
-u_{xx} + (1+u_x^2)^{3/2}, & x\in (-1,1)\\
u, & x  = \pm 1.\\
\end{cases}
\eq
This time, the inequality is not strict:
\[ \int_{-1}^1\kappa(x)\,dx = 2 = \int_{-\infty}^\infty (1+p^2)^{-3/2}\,dp.\]
We claim that for any $a \geq 0$, the function
\[u(x) = -\sqrt{1-x^2}-a \]
is a viscosity solution.  As before, this is a classical solution in the interior $(-1,1)$.  On the boundary, $u(x) \leq 0$ so it is also a sub-solution.  Finally, we note that $u'(x)$ becomes unbounded at the boundary and it is therefore impossible to place any smooth test function $\phi$ below $u$ at $\pm1$.  Thus the super-solution condition is trivially satisfied at the boundary.  We conclude that strict inequality in the condition of Lemma~\ref{lem:exist} must be needed to guarantee uniqueness of the viscosity solution. \hfill $\square$
\end{examp}

An important property of many elliptic equations is the comparison principle, which immediately implies uniqueness of the solution.
\begin{definition}[Comparison principle]\label{def:comparison}
A PDE has a \emph{comparison principle} if whenever $u$ is an upper semi-continuous sub-solution and $v$ a lower semi-continuous super-solution of the equation, then $u \leq v$ on $\bar{\Omega}$.
\end{definition}

The comparison principle plays an important role in developing convergent approximation schemes using the Barles-Souganidis framework~\cite{BSnum}.  As we shall see, our equation equipped with a weak Dirichlet condition does not satisfy a comparison principle in the traditional sense.  

\begin{examp}
To see why this must be the case, we return to the one-dimensional example considered in section~\ref{sec:bc}.  We have already seen that the function $u = -\sqrt{1-x^2}$ is a viscosity solution, and therefore a viscosity super-solution as well.  

Now we claim that the upper semi-continuous function
\[
v(x) = \begin{cases}
-\sqrt{1-x^2}, & x\in[0,1)\\
1, & x = 1
\end{cases}
\]
is a sub-solution; see Figure~\ref{fig:sub1D}.  As before, the appropriate conditions are trivially satisfied for $x\in(0,1]$ and we need only check $x = 1$.  Now any test function $\phi$ will satisfy
\[ F_*(1,v(1),\phi_x(1),\phi_{xx}(1)) \leq v(1) - 1 = 0. \]
We conclude that $v$ is a sub-solution, $u$ a super-solution, and $v(1)>u(1)$.  Thus this equation cannot satisfy a comparison principle in the sense of Definition~\ref{def:comparison}. \hfill $\square$
\end{examp}

Instead, we will develop a relaxed comparison principle, which will be used to produce a convergence proof via a modification of the usual framework.

Proofs of comparison principles are not available for general elliptic PDEs and often rely on particular details of the structure of a given PDE operator.  While the prescribed Gaussian curvature equation does not satisfy the structure condition typically used to prove comparison~\cite{CIL}, Ishii and Lions have shown comparison for a class of \MA equations that satisfy a much weaker structure condition~\cite[Theorem~V.2]{IshiiLions}.
%
An immediate consequence of this result is a comparison principle for our equation of interest.
\begin{thm}[Classical comparison principle for Gaussian curvature]\label{thm:comparisonGauss}
Suppose\newline $\kappa:\Omega\to[0,\infty)$ is continuous and bounded and let $u$, $v$ be respectively sub- and super-solutions of the PDE for prescribed Gaussian curvature~\eqref{eq:MA}.  If $u \leq v$ on $\partial\Omega$ then $u \leq v$ on $\bar{\Omega}$.
\end{thm}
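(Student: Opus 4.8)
The plan is to deduce Theorem~\ref{thm:comparisonGauss} from the Ishii--Lions comparison result for \MA equations satisfying their weak structure condition~\cite[Theorem~V.2]{IshiiLions}, so the main work is (1) verifying that the prescribed Gaussian curvature operator~\eqref{eq:MAop} fits into the class of equations treated there, and (2) handling the boundary condition correctly, since here the hypothesis is the \emph{classical} Dirichlet ordering $u \leq v$ on $\partial\Omega$ rather than the weak formulation. I would first recall the normal form used by Ishii--Lions: equations of the type $\det(D^2 u) = f(x,u,\nabla u)$ with $f$ continuous, $f > 0$ (or $f\geq 0$), and $f$ nondecreasing in the $u$-variable, restricted to the cone of convex sub/super-solutions. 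For~\eqref{eq:MA} we have $f(x,z,p) = \kappa(x)R(p) = \kappa(x)(1+|p|^2)^{(n+2)/2}$, which is continuous and bounded by (H3), is nonnegative, and --- crucially --- is \emph{independent of $z$}, hence trivially nondecreasing in $z$; moreover $f$ is smooth and grows polynomially in $p$, which is exactly the growth tolerated by their structure condition. So the verification that~\eqref{eq:MA} belongs to their class should be essentially a matter of matching notation.

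Next I would set up the comparison argument proper. Let $u$ be an upper semi-continuous convex sub-solution and $v$ a lower semi-continuous convex super-solution of~\eqref{eq:MA} in the interior sense (Definitions~\ref{def:subsolution}--\ref{def:supersolution}), with $u \leq v$ on $\partial\Omega$. Note that both are automatically convex and continuous in $\Omega$; $u$ extends upper semi-continuously to $\bar\Omega$ and $v$ extends lower semi-continuously. Suppose for contradiction that $\sup_{\bar\Omega}(u-v) > 0$. Since $u - v$ is upper semi-continuous on the compact set $\bar\Omega$ and is $\leq 0$ on $\partial\Omega$, the supremum is attained at some interior point $x_0 \in \Omega$ with $(u-v)(x_0) > 0$. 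From this point on the standard doubling-of-variables machinery of~\cite[Theorem~V.2]{IshiiLions} applies verbatim: one penalizes with $\frac{|x-y|^2}{2\varepsilon}$, invokes the Crandall--Ishii lemma to produce matrices $X \geq Y$ in the appropriate semijets, uses convexity to control the relevant eigenvalues away from zero (so that the determinants are genuinely positive and the weak structure condition can be exploited), and derives a contradiction in the limit $\varepsilon \to 0$. Because the maximizing points converge to $x_0 \in \Omega$, they stay in a compact subset of $\Omega$ for small $\varepsilon$, so no boundary test functions are ever needed and the modified test-function spaces in Definitions~\ref{def:subsolution}--\ref{def:supersolution} cause no difficulty.

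The main obstacle I anticipate is the degeneracy of the \MA operator when eigenvalues of the Hessian approach zero: the structure condition in~\cite{IshiiLions} is only useful where $\det$ is bounded below, and on the boundary of the convexity cone the comparison argument degenerates. This is precisely why the statement restricts to \emph{convex} sub- and super-solutions and why Ishii--Lions need their weaker structure condition rather than the classical one. Since we are invoking their theorem as a black box, the real task is to confirm that our sub/super-solution notions (with the convexity restriction on super-solution test functions, and the automatic convexity from Lemma~\ref{lem:subConvex}-type reasoning) feed correctly into their hypotheses; I would state this as a short remark rather than reproving the doubling argument. A secondary, purely bookkeeping point is to make sure the extension of $u,v$ to $\bar\Omega$ and the use of $u\leq v$ \emph{on $\partial\Omega$} (not merely in a weak envelope sense) is enough to push the interior maximum argument through --- which it is, because the only thing required is that $u - v$ be $\leq 0$ on $\partial\Omega$ in the ordinary pointwise sense, matching the hypothesis exactly.
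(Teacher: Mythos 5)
Your proposal takes essentially the same route as the paper, which offers no independent proof of Theorem~\ref{thm:comparisonGauss} at all: it simply states that the result is an immediate consequence of the Ishii--Lions comparison theorem~\cite[Theorem~V.2]{IshiiLions} for \MA equations satisfying their weak structure condition. Your additional elaboration (verifying that $f(x,z,p)=\kappa(x)R(p)$ is nonnegative, continuous, independent of $z$, and of admissible growth in $p$, and noting that the classical boundary ordering lets the doubling argument localise to an interior maximum) is correct and fills in the matching-of-hypotheses step that the paper leaves implicit.
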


This yields a uniqueness result for the \MA equation if a solution exists that satisfies the Dirichlet boundary conditions in a classical sense.  However, if we want to interpret the boundary conditions in the weak sense of~\eqref{eq:weakBC}, the comparison principle only applies to continuous functions and not to general semi-continuous sub- and super-solutions~\cite{CIL}.  Modified comparison principles for non-continuous solutions have been proved for Hamilton-Jacobi equations by exploiting the control interpretation of the problem, but this approach does not apply to our setting~\cite{BardiDolcetta}. Instead, we will use the geometric interpretation of the generalised solution to demonstrate uniqueness of the viscosity solution, then use this to prove a modified comparison principle that is valid only in the interior of the domain.  In section~\ref{sec:approx}, this comparison principle will be used to prove the convergence of appropriate approximations to the solution of~\eqref{eq:MA}.

\begin{thm}[Interior comparison principle for weak Dirichlet problem]\label{thm:comparison}
Assume\newline Hypothesis~\ref{hyp} holds.
If $u$ is a bounded upper semi-continuous sub-solution and $v$ a bounded lower semi-continuous super-solution of~\eqref{eq:MA} then  $u \leq v$ on ${\Omega}$.
\end{thm}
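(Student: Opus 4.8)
The plan is to sidestep the boundary entirely --- where, as the examples of Section~\ref{sec:bc} show, no inequality between a sub-solution and a super-solution can be expected --- by identifying both $u$ and $v$ with the generalised solution $\bar u$ furnished by Lemma~\ref{lem:exist}. The first and decisive step is to prove that $\bar u$ is, in the interior, the \emph{only} viscosity solution of~\eqref{eq:MA}. Let $w$ be any viscosity solution; its upper semicontinuous envelope $w^*$ is a sub-solution, hence convex in $\Omega$ by Lemma~\ref{lem:subConvex} and in particular continuous there. Granting that $w^*$ and $w_*$ coincide on $\Omega$ --- the one point I expect to require genuine care --- the common interior restriction is a convex viscosity solution of~\eqref{eq:MAconvex}, which Theorems~\ref{thm:equivalence1} and~\ref{thm:equivalence} identify with a generalised solution; meanwhile Lemma~\ref{lem:subBC} gives $\limsup_{x\to y}w(x)\le g(y)$ on $\partial\Omega$, i.e.\ the weak Dirichlet condition~\eqref{eq:weakBC}. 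The uniqueness clause of Lemma~\ref{lem:exist} (Bakelman's theorem, whose proof is precisely where the geometric, Aleksandrov-measure description of the solution enters) then forces $w=\bar u$ throughout $\Omega$, and since $\bar u$ is itself a viscosity solution by Theorem~\ref{thm:exist}, interior uniqueness follows. I expect this step to be the main obstacle: verifying that a possibly discontinuous viscosity solution really restricts to a \emph{bona fide} generalised solution in the interior, so that Bakelman's uniqueness may legitimately be invoked; everything that follows is bookkeeping with results already in hand.

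Granting interior uniqueness, the sub-solution half is immediate. Since $u=u^*$ is a sub-solution of~\eqref{eq:MA}, it belongs to the family whose supremum is, by Corollary~\ref{cor:maxSub} together with the proof of Theorem~\ref{thm:exist} (which identifies the maximal sub-solution of~\eqref{eq:MA} with the generalised solution $\bar u$), exactly $\bar u$. Hence $u\le\bar u$ on $\Omega$.

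For the super-solution half I would sandwich $\bar u$ between an explicit barrier and $v$. Because $v$ is bounded and $\bar u^*$ is bounded on $\bar\Omega$ (a finite convex function that is bounded above is also bounded below), a sufficiently large constant $t$ makes $\psi:=\bar u^*-t$ satisfy both $\psi\le v$ on $\bar\Omega$ and $\psi\le g$ on $\partial\Omega$. Since the operators in~\eqref{eq:MAlower}--\eqref{eq:MAupper} do not depend on $z$, subtracting a constant from $\bar u^*$ leaves it a sub-solution in $\Omega$, and $\psi\le g$ on $\partial\Omega$ makes the boundary part of $F_*$ automatically nonpositive, so $\psi$ is an upper semicontinuous sub-solution of~\eqref{eq:MA} lying below $v$. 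Applying Perron's construction (Theorem~\ref{thm:perron}) to the ordered pair $\psi\le v$ yields a viscosity solution $w'$ of~\eqref{eq:MA} with $\psi\le w'\le v$ on $\bar\Omega$; by the interior uniqueness of the first step, $w'=\bar u$ on $\Omega$, whence $\bar u\le v$ on $\Omega$. Chaining the two halves gives $u\le\bar u\le v$ on $\Omega$, which is the claim. A final routine obligation is to check that $\psi$ indeed meets the definition of a sub-solution of the \emph{boundary} operator $F_*$ at every point of $\partial\Omega$, and that all hypotheses of the Perron theorem are satisfied on all of $\bar\Omega$.
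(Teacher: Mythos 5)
Your overall strategy --- sandwich a barrier below $v$, apply Perron's construction, and identify the resulting viscosity solution with the generalised solution via interior uniqueness --- is structurally the same as the paper's. The paper simply takes $u-M$ as the barrier rather than $\bar u^*-t$, applies Theorem~\ref{thm:perron} to the pair $(u-M,v)$, and invokes Theorem~\ref{thm:uniqueness} to conclude that the Perron solution is the maximal sub-solution, which gives $u\le w\le v$ on $\Omega$ in one stroke.

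The genuine gap is in your first step, where you attempt to re-derive interior uniqueness from Bakelman's theorem rather than citing Theorem~\ref{thm:uniqueness}. You assert that Lemma~\ref{lem:subBC} gives ``$\limsup_{x\to y}w(x)\le g(y)$, i.e.\ the weak Dirichlet condition~\eqref{eq:weakBC}'', and then invoke the uniqueness clause of Lemma~\ref{lem:exist}. But the weak Dirichlet condition of Definition~\ref{def:weakDirichlet} is \emph{not} merely the $\limsup$ inequality~\eqref{eq:weakBC}; it additionally requires that the function be \emph{maximal} among all generalised solutions satisfying~\eqref{eq:weakBC}, and it is exactly that maximality which Bakelman's uniqueness statement concerns. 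Establishing only that $w$ is a generalised solution in $\Omega$ satisfying $\limsup w\le g$ does not let you conclude $w=\bar u$: for any $c>0$, $\bar u-c$ is also a generalised solution with $\limsup(\bar u - c) \le g$, so without maximality your ``interior uniqueness'' claim would be false as stated. Closing this hole is precisely the non-trivial content of the paper's Theorem~\ref{thm:uniqueness}, whose proof uses the subgradient-ordering Lemma~\ref{lem:subgradOrder} to show $\partial v(x)=\partial u(x)$ for all $x\in\Omega$ (hence $u-v\equiv c$), and then the strict compatibility hypothesis (H4) to force $c=0$ --- a point your sketch never addresses. If you instead cite Theorem~\ref{thm:uniqueness} directly for interior uniqueness, the rest of your argument (both the sub-solution half and the Perron sandwich) is sound and essentially matches the paper's proof.
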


The proof of this theorem is deferred until the end of this section.  We first build up some necessary machinery.
A key ingredient is the fact that the sub-gradients of two ordered functions must themselves be ordered under appropriate boundary conditions.  This result is known when the two functions are identical at the boundary~\cite[Lemma~1.4.1]{Gutierrez}.  We prove a similar result under a much weaker condition, requiring that at all boundary points either the functions are equal or the gradient of the lower function is unbounded.  As this result is used in the proof of Theorem~\ref{thm:equivalence1}, we emphasise that it is a consequence of the definition of the subgradient and does not depend on any other results of this article.

\begin{lem}[Subgradient sets are ordered]\label{lem:subgradOrder}
Let $u,v$ be  lower semi-continuous and $u \leq v$ on an open set $E$.  Suppose also that at each boundary point $x_0\in\partial E$ either $v(x_0)=u(x_0)$ or $\partial u(x_0)$ is empty.  Then $\partial v(E) \subset \partial u(E)$.
\end{lem}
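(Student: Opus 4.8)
The plan is to show that every $p \in \partial v(E)$ also lies in $\partial u(E)$ by "sliding" the supporting hyperplane with slope $p$ down toward the graph of $u$ and checking that it first touches $u$ at an interior point of $E$. Fix $p \in \partial v(E)$, so there is $y_0 \in E$ with $v(x) \ge v(y_0) + p\cdot(x-y_0)$ for all $x \in E$. Consider the affine function $\ell_c(x) = c + p\cdot x$ and let $c^* = \inf\{c : \ell_c \ge u \text{ on } E\}$; equivalently $c^* = \sup_{x \in E}(u(x) - p\cdot x)$. Since $u \le v$ and $v$ lies above $\ell_{c_0}$ for the appropriate constant $c_0$, the set over which we take the supremum is bounded above, so $c^*$ is finite and $\ell_{c^*} \ge u$ on $E$ with equality approached somewhere. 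The goal is to produce a point $x^* \in E$ (not merely in $\bar E$) where $u(x^*) = \ell_{c^*}(x^*)$; this $x^*$ witnesses $p \in \partial u(x^*) \subset \partial u(E)$.

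The main work is locating the contact point in the \emph{open} set $E$. Take a sequence $x_k \in E$ with $u(x_k) - p\cdot x_k \to c^*$; by passing to a subsequence, $x_k \to x^* \in \bar E$. If $x^* \in E$, lower semicontinuity of $u$ gives $u(x^*) - p\cdot x^* \le \liminf(u(x_k) - p\cdot x_k) = c^*$, and since also $u(x^*) - p \cdot x^* \le c^*$ by definition while the reverse must hold to make $x^*$ a genuine contact point, one argues $u(x^*) - p\cdot x^* = c^*$: indeed if it were strictly less, $x^*$ would not be needed, but the supremum being attained in the limit forces equality once we rule out the boundary case. So the crux is excluding $x^* \in \partial E$. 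Suppose $x^* \in \partial E$. By hypothesis either $v(x^*) = u(x^*)$ or $\partial u(x^*) = \emptyset$; I will use the first alternative and derive a contradiction, then handle the second. Using $u \le v$ and the supporting inequality for $v$ at $y_0$, evaluate along $x_k$:
\[
u(x_k) - p\cdot x_k \le v(x_k) - p \cdot x_k.
\]
Taking $\liminf$ and using lower semicontinuity of $v$ at $x^*$ together with $v(x^*) = u(x^*)$:
\[
c^* = \lim_k (u(x_k) - p\cdot x_k) \le \liminf_k (v(x_k) - p\cdot x_k).
\]
Hmm — this alone is not yet a contradiction; the contradiction must come from the fact that the supporting plane of $v$ at $y_0$ forces $v(x_k) - p\cdot x_k \ge v(y_0) - p\cdot y_0$, so $c^* \ge v(y_0) - p \cdot y_0 \ge u(y_0) - p\cdot y_0$, meaning the supremum defining $c^*$ is already achieved (in value) by the interior competitor $y_0$. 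Thus it suffices to show the supremum is actually \emph{attained} at an interior point: either it is attained at $y_0$ — in which case $u(y_0) - p\cdot y_0 = c^*$ and we are done with $x^* := y_0 \in E$ — or $u(y_0) - p\cdot y_0 < c^*$, and then any maximizing sequence cannot accumulate only on $\partial E$ because near a boundary point $x^*$ with $v(x^*) = u(x^*)$ we would get $u(x^*) - p\cdot x^* \ge c^* > u(y_0) - p\cdot y_0 = v(y_0) - p\cdot y_0 \ge$ the value of $v$'s supporting plane, contradicting $v(x^*) = u(x^*)$ lying \emph{on} a plane through $y_0$ of the same slope unless $x^* = y_0$.

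I expect the delicate point to be this dichotomy at the boundary and making the "interior contact point exists" argument airtight, in particular handling the second alternative $\partial u(x^*) = \emptyset$: here one observes that if the maximizing sequence $x_k \to x^* \in \partial E$, then the affine function $\ell_{c^*}$ satisfies $\ell_{c^*}(x^*) \ge u(x^*)$ by lower semicontinuity, and if equality held then $p \in \partial u(x^*)$ (taking the inequality $\ell_{c^*} \ge u$ restricted to $E$ and passing to the closure via l.s.c.), contradicting emptiness of $\partial u(x^*)$; if strict inequality $\ell_{c^*}(x^*) > u(x^*)$ held, then $x^*$ contributes a value strictly below $c^*$, so it cannot be a limit of a maximizing sequence — forcing accumulation at an interior point after all. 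Assembling these cases shows a contact point $x^* \in E$ always exists, hence $p \in \partial u(x^*) \subset \partial u(E)$, and since $p \in \partial v(E)$ was arbitrary, $\partial v(E) \subset \partial u(E)$.
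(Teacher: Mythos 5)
Your construction places the supporting hyperplane on the wrong side of $u$, and this breaks the proof at its core. You set $c^*=\sup_{x\in E}(u(x)-p\cdot x)$, so that $\ell_{c^*}(x)=c^*+p\cdot x\geq u(x)$ on $E$ with contact at $x^*$. But a contact point of a plane lying \emph{above} $u$ gives $u(x)\leq u(x^*)+p\cdot(x-x^*)$ for $x\in E$, i.e.\ $p$ is a \emph{super}gradient of $u$ at $x^*$; the claim that such an $x^*$ ``witnesses $p\in\partial u(x^*)$'' is false, and you invoke it again when handling the alternative $\partial u(x^*)=\emptyset$. The correct move is to start from the supporting hyperplane $L(x)=v(x_0)+p\cdot(x-x_0)$ of $v$ at the specific point $x_0$ with $p\in\partial v(x_0)$ --- this plane lies \emph{below} $v$ --- and lower it by $a=\sup_{x\in E}\{L(x)-u(x)\}\geq0$ so that it lies below $u$; the point $x_1$ where this supremum is attained (equivalently, where $u(x)-p\cdot x$ attains its \emph{minimum}) is where $p\in\partial u(x_1)$.

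The orientation matters for two further steps, so this is not a repairable sign slip. First, attainment: since $u$ is lower semi-continuous, $L-u$ is upper semi-continuous and its supremum over $\bar E$ is attained and finite; your $u(x)-p\cdot x$ is lower semi-continuous, whose supremum need not be attained, and passing to the limit along a maximizing sequence via lower semicontinuity gives an inequality in the useless direction (you flag this difficulty yourself but do not resolve it). Relatedly, $c^*<\infty$ does not follow from ``$u\le v$ and $v\ge\ell_{c_0}$'': these bound $u$ from above by $v$ and $v$ from \emph{below}, which gives no upper bound on $u(x)-p\cdot x$. Second, the boundary dichotomy: in the correct construction one has $v(x_1)\geq L(x_1)=u(x_1)+a$ because $L$ supports $v$ from below, so if $a>0$ the contact point satisfies $v(x_1)>u(x_1)$, and the hypothesis then forces $\partial u(x_1)=\emptyset$ whenever $x_1\in\partial E$, contradicting $p\in\partial u(x_1)$; if $a=0$ one gets $p\in\partial u(x_0)$ directly at the interior point $x_0$. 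Your boundary analysis never reaches this dichotomy because the key inequality $v(x_1)\geq u(x_1)+a$ is unavailable on the wrong side of the graph, and the argument there remains circular.
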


\begin{proof}
Choose any $x_0\in E$ and $p\in\partial v(x_0)$ and define the value
\bq\label{eq:supp} a \equiv \sup\limits_{x\in E}\{v(x_0) + p\cdot(x-x_0) - u(x)\} \geq 0. \eq
We claim that
\[ v(x_0) + p\cdot(x-x_0) - a \]
is a supporting hyperplane to $u$.

Since $u$ is lower semi-continuous, there exists some $x_1\in\bar{E}$ such that
\[ a = v(x_0) + p\cdot(x_1-x_0)-u(x_1). \]
This enables us to rewrite the definition of~$a$ in~\eqref{eq:supp} as
\[ u(x) \geq v(x_0) + p\cdot (x-x_0)-a = u(x_1) + p\cdot(x-x_1) \]
for every $x\in E$.  Therefore $p\in\partial u(x_1)$.  We still need to demonstrate that $x_1\in E$ is an interior point.

We recall that since $p\in\partial v(x_0)$,
\[ v(x_1) \geq v(x_0) + p\cdot (x_1-x_0) = u(x_1) + a. \]

{\bf Case 1}: $a > 0$. 
Then $v(x_1) > u(x_1)$.  Suppose that $x_1\in\partial E$.  Since $u(x_1) \neq v(x_1)$ it must be the case that $\partial u(x_1)$ is empty, which contradicts the fact that $p\in\partial u(x_1)$.  We conclude that $x_1\in E$ and $p\in\partial u(E)$.

{\bf Case 2}: $a = 0$.
Then for every $x\in E$,
\[ u(x) \geq v(x_0) + p\cdot (x-x_0) \geq u(x_0) + p\cdot(x-x_0) \]
and $p \in \partial u(x_0)\subset\partial u(E)$. 
\end{proof}

Using this ordering of subgradients, we now demonstrate that the viscosity solution is unique.

\begin{thm}[Viscosity solution is unique]\label{thm:uniqueness}
Assume Hypothesis~\ref{hyp} holds. Let~$u$ be the maximal sub-solution of~\eqref{eq:MA} and let~$v$ be any viscosity solution.  Then $u=v$ on $\Omega$.
\end{thm}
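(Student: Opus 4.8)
The plan is to exploit the equivalence between viscosity and generalised solutions (Theorems~\ref{thm:equivalence1} and~\ref{thm:equivalence}) in the interior, together with the characterisation of the generalised Dirichlet problem, and then bootstrap the comparison at the boundary using Lemma~\ref{lem:subgradOrder}. First I would observe that by Theorem~\ref{thm:exist} the generalised solution $u_g$ of~\eqref{eq:MA} is a viscosity solution, hence (by Corollary~\ref{cor:maxSub}, since $u_g$ lies below the maximal sub-solution) $u_g \leq u$ on $\Omega$; conversely, since $u$ is a viscosity solution of~\eqref{eq:MA} it is in particular a viscosity sub-solution, so $u^*$ satisfies $\limsup_{x\to y} u(x) \leq g(y)$ on $\partial\Omega$ by Lemma~\ref{lem:subBC}, and $u$ restricted to $\Omega$ is a generalised solution by the equivalence theorems. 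By the defining maximality property of the generalised solution (Definition~\ref{def:weakDirichlet}), any generalised solution satisfying the weak boundary condition lies below $u_g$, so $u \leq u_g$ on $\Omega$. Combining the two inequalities gives $u = u_g = v$ on $\Omega$, provided I first argue $v = u_g$ by the identical chain of reasoning applied to $v$ in place of $u$.

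More carefully, the cleanest route is: (i) show every viscosity solution $v$ of~\eqref{eq:MA} coincides with the generalised solution $u_g$ on $\Omega$, and (ii) apply this with $v$ the maximal sub-solution as well (noting the maximal sub-solution is a viscosity solution by Corollary~\ref{cor:maxSub}, with $u_g\le$ it automatically by Theorem~\ref{thm:exist}). For (i): $v_*$ is a super-solution and $v^*$ a sub-solution; by Lemma~\ref{lem:subConvex} $v$ is convex on $\Omega$, and by Theorems~\ref{thm:equivalence1},~\ref{thm:equivalence}, $v$ is a generalised solution of the prescribed Gaussian curvature equation on $\Omega$. By Lemma~\ref{lem:subBC} we have $(v_*)^* \leq g$ on $\partial\Omega$, so $v$ satisfies the weak boundary condition~\eqref{eq:weakBC}; maximality of $u_g$ then forces $v \leq u_g$ on $\Omega$. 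For the reverse inequality $u_g \leq v$, I would use that $u_g^*$ is a viscosity sub-solution (Theorem~\ref{thm:exist}) while $v_*$ is a viscosity super-solution, and invoke the ordering of subgradients: by Lemma~\ref{lem:superBC} applied to $v_*$, at every boundary point either $v_*(x_0) \geq g(x_0) \geq (u_g)_*(x_0)$ — giving the required equality-or-empty-subgradient hypothesis when combined with the fact that $v_* \leq u_g$ already on $\Omega$ extended by lower semicontinuity — or $\partial v_*(x_0)$ is empty. This is exactly the hypothesis of Lemma~\ref{lem:subgradOrder} with the lower function $v_*$, yielding $\partial u_g(\Omega) \subseteq \partial v_*(\Omega)$ up to sets of measure zero, and hence, integrating the curvature measure $(1+|p|^2)^{-(n+2)/2}\,dp$ over these subgradient images and using Definition~\ref{def:generalised} for both $u_g$ and $v_*$, one concludes the total masses force $v_* = u_g$, whence $v = u_g$ on $\Omega$.

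The main obstacle I anticipate is the boundary-point bookkeeping needed to legitimately apply Lemma~\ref{lem:subgradOrder} with $v_*$ as the \emph{lower} function: the lemma requires $v_* \leq u_g$ on an open set together with the dichotomy at each boundary point. Establishing $v_* \leq u_g$ a priori (before we know $v_* = u_g$) is delicate — it should follow from $v \leq u_g$ on $\Omega$ (already proved via maximality) plus lower semicontinuity of $v_*$, but one must be careful that $v_*$ can dip strictly below $u_g$ only where $\partial v_*$ is empty, which is precisely what Lemma~\ref{lem:superBC} controls. A secondary subtlety is that the subgradient sets are ordered only up to boundary effects and null sets, so the comparison of generalised-solution measures must be done on compactly contained subdomains $\Omega' \Subset \Omega$, exhausting $\Omega$; since both $u_g$ and $v_*$ are convex hence locally Lipschitz in $\Omega$, their subgradient images behave well on such $\Omega'$ and the measure identity propagates. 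I would organise the write-up so that the measure-theoretic comparison is stated as the crux and the two soft inequalities ($v \leq u_g$ via maximality, $u_g \leq v$ via subgradient ordering) bracket it.
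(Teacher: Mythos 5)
Your first half is sound: reducing to the generalised solution $u_g$, and getting $v \leq u_g$ on $\Omega$ from the equivalence theorems, Lemma~\ref{lem:subBC}, and the maximality built into Definition~\ref{def:weakDirichlet}, is essentially the paper's easy direction. The gap is in the reverse inequality. From $v_* \leq u_g$ and the boundary dichotomy you extract from Corollary~\ref{cor:viscBC}, Lemma~\ref{lem:subgradOrder} does give $\partial u_g(\Omega) \subset \partial v(\Omega)$, and equality of the two curvature masses shows the difference of these subgradient images is null. But the step ``the total masses force $v_* = u_g$'' does not follow: two convex generalised solutions differing by a constant have \emph{identical} subgradient images over $\Omega$, so a global comparison of $\partial u_g(\Omega)$ with $\partial v(\Omega)$ can never distinguish $v$ from $u_g - c$. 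The paper's example~\eqref{eq:1d_2} exhibits exactly this failure mode: $-\sqrt{1-x^2}-a$ is a viscosity solution for every $a \geq 0$, all with subgradient image $\R$, and uniqueness genuinely fails there because (H4) holds with equality. Your argument never invokes the strict inequality in (H4), so as written it would also ``prove'' uniqueness in that borderline case where it is false.

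Two ingredients are therefore missing. First, the global measure identity must be upgraded to the \emph{pointwise} statement $\partial v(x) = \partial u_g(x)$; the paper does this by shifting $u_g$ to touch $v$ at an arbitrary $x_0\in\Omega$ (setting $w = u_g - u_g(x_0) + v(x_0)$), applying Lemma~\ref{lem:subgradOrder} on the contact set $E = \{w \geq v\}$, and running a separate limiting argument to show $\partial w(x_0)\cap\partial v(x_0)\neq\emptyset$, whence $u_g - v$ is constant. Second, a positive constant difference must be excluded: if $u_g - v \equiv c > 0$ then $v_* < g$ on all of $\partial\Omega$, Lemma~\ref{lem:superBC} forces $\partial v_*$ to be empty there, convexity then gives $\partial v(\Omega) = \R^n$, and the generalised-solution identity contradicts the strict inequality in (H4). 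Without these two steps the proof is incomplete at its crux.
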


\begin{proof}
By Theorem~\ref{thm:exist}, $u$ is a viscosity solution.
Note that $v \leq u$ necessarily since $u$ is maximal.  Additionally, at all boundary points $x_0\in\partial\Omega$, either $\partial v_*(x_0)$ is empty or $v_*(x_0) = v^*(x_0) = u_*(x_0) = u^*(x_0) = g(x_0)$; see Corollary~\ref{cor:viscBC}.

Choose any $x_0\in\Omega$ and consider the function 
\[ w(x) = u(x)-u(x_0)+v(x_0) \leq u(x). \]
Notice that $\partial w(x) = \partial u(x)$.
Now define the set 
\[ E = \{x\in\Omega\mid w(x) \geq v(x)\}. \]


{\bf Case 1}: $x_0\notin\partial E$.  Then $w-v$ has a minimum at $x_0$ or is constant nearby and $\partial v(x_0) \subset \partial w(x_0)$.

{\bf Case 2}: $x_0\in\partial E$. 
Now for any $z\in\partial E$ either
  $w_*(z) = v_*(z)$ or 
  $w_*(z)>v_*(z)$ with $z\in\partial\Omega$.  In the latter case, we must have $v_*(z)<u_*(z) \leq u^*(z) \leq g(z)$ so that $\partial v_*(z)$ is empty (Lemma~\ref{lem:superBC}).  Thus the hypotheses of Lemma~\ref{lem:subgradOrder} are satisfied and $\partial w(E) \subset \partial v(E)$.

We also note that both $w$ and $v$ are generalised solutions of the prescribed Gaussian curvature equation, which means that
\[ \int_{\partial v(E)}(1+\abs{p}^2)^{-(n+2)/2}\,dp = \int_{\partial w(E)}(1+\abs{p}^2)^{-(n+2)/2}\,dp. \]
We conclude that the boundaries of the sets $\partial v(E)$, $\partial w(E)$ must be identical. 

Now let $p\in\partial w(x_0)$ with  $p$ close to subgradient values coming from the interior of the set $E$.  That is, there exists a sequence $x_n \in E$, $p_n\in\partial w(x_n)$ such that $x_n\to x_0$ and $p_n\to p$.   Then there must be another boundary point $z\in\partial E$ such that $p\in\partial v(z)$ and for every $x\in\Omega$:
\begin{align*}
w(x) &\geq w(x_0) + p\cdot(x-x_0)\\
v(x) &\geq v(z) + p\cdot(x-z).
\end{align*}
A consequence of this is that for every $x\notin E$,
\[ v(x) > w(x) \geq w(x_0) + p\cdot(x-x_0) = v(x_0) + p\cdot(x-x_0). \]

We suppose that $p\notin\partial v(x_0)$ and seek a contradiction.  In particular, this means that there exists some $y\in E$ such that
\[ v(y) < v(x_0) + p\cdot(y-x_0) = w(x_0) + p\cdot(y-x_0). \]
Combining these inequalities, we obtain
\begin{align*}
w(x_0) + p\cdot(y-x_0) > v(y) \geq v(z) + p\cdot(y-z) = w(z) + p\cdot(y-z).
\end{align*}
Rearranging this yields
\[ w(z) < w(x_0) + p\cdot(z-x_0), \]
which contradicts the definition of $p\in\partial w(x_0)$.  Thus we must have $p\in\partial v(x_0)$.

We conclude that at all points $x\in\Omega$, the intersection  $\partial v(x)\cap\partial w(x)$ is non-empty and thus $\partial v(x) = \partial w(x) = \partial u(x)$.  Therefore $u(x)-v(x) = c$ is constant.

Since $u$ is the maximal solution, $c \geq 0$.  If $c > 0$, then $v_*(x) < u_*(x) \leq g(x)$ at all points on the boundary $\partial\Omega$.  From Lemma~\ref{lem:superBC}, $\partial v_*(x)$ must be empty at all points on the boundary.  Then since $v$ is convex, we must have $\partial v(\Omega) = \R^n$.  Since $v$ is a generalised solution of the prescribed Gaussian curvature equation we can compute
\begin{align*} \int_{\R^n} (1+\abs{p}^2)^{-(n+2)/2}\,dp &= \int_{\partial v(\Omega)}(1+\abs{p}^2)^{-(n+2)/2}\,dp\\ &= \int_\Omega \kappa(x)\,dx < \int_{\R^n} (1+\abs{p}^2)^{-(n+2)/2}\,dp.  \end{align*}
This is a contradiction, which means that $c=0$ and $v = u$.
\end{proof}

Now we are able to establish the interior comparison principle for general semi-continuous functions.

%
%

\begin{proof}[Proof of Theorem~\ref{thm:comparison}]
Since $u,v$ are bounded, for sufficiently large $M>0$ we have
\[ u-M \leq {v}. \]
As $u$ is a sub-solution, $u-M$ is as well.

By Theorems~\ref{thm:perron} and~\ref{thm:uniqueness}, the unique viscosity solution of~\eqref{eq:MA} can be expressed as
\[ w(x) =  \sup\{W(x) \mid u - M \leq W \leq {v}, \, W^* \text{ is a sub-solution}\}.\]

By Corollary~\ref{cor:maxSub}, $w$ is the maximal sub-solution so that $u \leq w$.  Thus this characterisation implies that in~$\Omega$,
%
\[ u \leq w \leq  v.  \]
\end{proof}

\section{Convergence of Elliptic Schemes}\label{sec:convergeElliptic}

With a solid theoretical understanding of the generalised Dirichlet problem in place, we now turn our attention to developing criteria that should be satisfied by a convergent numerical method.  We begin by developing a framework that applies to general elliptic PDEs with an interior comparison principle, under the mild condition that it is possible to construct strict classical sub- and super-solutions.  In section~\ref{sec:approx}, we will use this framework to develop and analyse a convergent numerical method for the equation of prescribed Gaussian curvature.

\subsection{Properties of schemes}\label{sec:properties}

Consider a set of discretisation points $\G^h\subset\bar{\Omega}$, which can be a finite difference grid or a more general point cloud.  Here $h$ is a small parameter relating to the grid resolution. In particular, we expect that as $h\to0$, the domain becomes fully resolved in the sense that
\bq\label{eq:resolution} \lim\limits_{h\to0} \sup\limits_{y\in\Omega}\min\limits_{x\in\G^h}\abs{x-y} = 0. \eq

To produce consistent, monotone approximations near the boundary, we will later require a sufficiently high boundary resolution $h_B$,
\bq\label{eq:bdyRes} h_B = \sup\limits_{y\in\partial\Omega}\min\limits_{x\in\G^h\cap\partial\Omega}\abs{x-y} \eq
with $h_B \ll h$.

We consider finite difference schemes that have the form
\bq\label{eq:approx} F^h(x,u(x),u(x)-u(\cdot)) = 0, \quad x\in \G^h \eq
where $u:\G^h\to\R$ is a grid function.


Our convergence framework requires schemes to be consistent, monotone, and Lipschitz continuous.

\begin{definition}[Consistency]\label{def:consistency}
The scheme~\eqref{eq:approx} is \emph{consistent} with the PDE
\bq\label{eq:PDEGen} F(x,u(x),\nabla u(x), D^2u(x)) = 0, \quad x\in\bar{\Omega} \eq
 if for any smooth function $\phi$ and $x\in\bar{\Omega}$,
\[ \limsup_{h\to0,y\to x, z\in\G^h\to x,\xi\to0} F^h(z,\phi(y)+\xi,\phi(y)-\phi(\cdot)) \leq F^*(x,\phi(x),\nabla\phi(x),D^2\phi(x)), 
\]
\[ \liminf_{h\to0,y\to x, z\in\G^h\to x,\xi\to0} F^h(y,\phi(y)+\xi,\phi(y)-\phi(\cdot)) \geq F_*(x,\phi(x),\nabla\phi(x),D^2\phi(x)). \]
\end{definition}

\begin{definition}[Monotonicity]\label{def:monotonicity}
The scheme~\eqref{eq:approx} is \emph{monotone} if $F^h$ is a non-decreasing function of its final two arguments.
\end{definition}

\begin{definition}[Lipschitz]\label{def:lipschitz}
The scheme~\eqref{eq:approx} is \emph{Lipschitz} if $F^h$ is locally Lipschitz continuous in its final two arguments. 
\end{definition}

These properties ensure that the approximation schemes inherit the basic structure of the underlying elliptic PDE.  In particular,
we note that monotone (elliptic) finite difference schemes enjoy a discrete comparison principle.

\begin{lem}[Discrete comparison principle~{\cite[Theorem~5]{ObermanSINUM}}]\label{lem:discreteComp}
Let $F^h$ be a monotone scheme and $F^h(x,u(x),u(x)-u(\cdot)) < F^h(x,v(x),v(x)-v(\cdot))$ for every $x\in\G^h$.  Then $u(x) \leq v(x)$ for every $x\in\G^h$.
\end{lem}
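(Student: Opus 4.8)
The plan is to prove the discrete comparison principle (Lemma~\ref{lem:discreteComp}) by a maximum-principle argument at the discrete level, examining a point where the difference $u-v$ attains its maximum over $\G^h$. First I would suppose for contradiction that the conclusion fails, so that $M \equiv \max_{x\in\G^h}(u(x)-v(x)) > 0$, and pick a maximiser $x_0\in\G^h$, i.e.\ $u(x_0)-v(x_0) = M \geq u(y)-v(y)$ for all $y\in\G^h$. Rearranging, $u(x_0) > v(x_0)$ and $u(x_0)-u(y) \geq v(x_0)-v(y)$ for every $y\in\G^h$; that is, the finite-difference stencil argument $u(x_0)-u(\cdot)$ dominates $v(x_0)-v(\cdot)$ componentwise.

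The heart of the argument is then monotonicity (Definition~\ref{def:monotonicity}): since $F^h$ is non-decreasing in its final two arguments, and since $u(x_0) > v(x_0)$ together with $u(x_0)-u(\cdot) \geq v(x_0)-v(\cdot)$, we would like to conclude
\[
F^h(x_0, u(x_0), u(x_0)-u(\cdot)) \;\geq\; F^h(x_0, v(x_0), v(x_0)-v(\cdot)).
\]
Here one must be slightly careful about the role of the second argument $z = u(x_0)$: the monotonicity hypothesis as literally stated concerns only the \emph{final two} arguments, so I would invoke degenerate ellipticity/properness in the $z$-slot, or else note (as is standard for schemes of this form) that the dependence on $z$ is through $u(x_0)$ alone and is non-decreasing — this is exactly the discrete analogue of the sign condition $\partial_z F \geq 0$ built into degenerate ellipticity (Definition~\ref{def:elliptic}). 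With that in hand, the displayed inequality contradicts the strict hypothesis $F^h(x_0,u(x_0),u(x_0)-u(\cdot)) < F^h(x_0,v(x_0),v(x_0)-v(\cdot))$, so no such $x_0$ can exist and $M \leq 0$, i.e.\ $u \leq v$ on $\G^h$.

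The main obstacle I anticipate is precisely the bookkeeping around the second argument and the precise definition of ``monotone scheme'' being imported from~\cite{ObermanSINUM}: one needs the scheme to be \emph{proper} (monotone in $z$ as well as in the difference quotients) for the argument to close, and the cleanest route is to cite the hypotheses of that reference directly rather than re-deriving them. A secondary subtlety is making sure the maximum over the finite set $\G^h$ is attained — this is immediate since $\G^h$ is finite, but it is worth stating, since it is the one place finiteness of the grid is used. Everything else is a one-line monotonicity estimate, so I would keep the proof to essentially the paragraph above, referring to Lemma~5 (Theorem~5) of~\cite{ObermanSINUM} for the version of monotonicity needed.
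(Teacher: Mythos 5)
Your argument is correct and is exactly the standard maximum-principle proof of \cite[Theorem~5]{ObermanSINUM}; the paper itself supplies no proof beyond that citation, so there is nothing different to compare against. The one hesitation you raise is unfounded: in the scheme $F^h(x,u(x),u(x)-u(\cdot))$ the ``final two arguments'' of Definition~\ref{def:monotonicity} are precisely $u(x)$ and $u(x)-u(\cdot)$, so non-decreasing dependence on the $z$-slot is already part of the stated monotonicity hypothesis and no separate properness assumption needs to be imported.
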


\begin{rem}
Because the inequality in this discrete comparison principle is strict, it does not guarantee solution uniqueness.  For some monotone schemes, it is not possible to find $u, v$ such that $F^h[u] < F^h[v]$ at every grid point.
\end{rem}

\subsection{Existence and stability}\label{sec:wellPosed}

For elliptic finite difference schemes that are also \emph{proper} (i.e. $F^h$ is a strictly increasing function of its second argument) and globally Lipschitz continuous, the results of~\cite{ObermanSINUM} establish that solutions exist and are stable.  In this section, we prove similar results for schemes that do not satisfy these extra conditions.  We will instead require a mild assumption on the underlying PDE, which must have a strict classical sub- and super-solution.  A bounded solution of the scheme can then be constructed using a discrete verion of Perron's method.

\begin{definition}[Strict classical sub(super)-solution]\label{def:strict}
A function $u\in C^2$ is a strict classical sub(super)-solution of the PDE~\eqref{eq:PDEGen} if there exists some $\mu>0$ such that
\[ F^*(x,u(x),\nabla u(x), D^2u(x)) \leq -\mu \quad \left(F_*(x,u(x),\nabla u(x), D^2u(x)) \geq \mu\right)\]
for every $x\in\bar{\Omega}$.
\end{definition}

\begin{rem}
For many elliptic PDEs, strict sub- and super-solutions can be obtained from simple quadratic functions.  It is slightly more involved in our setting because the right-hand side of~\eqref{eq:MA} can be unbounded.  However, we will demonstrate in section~\ref{sec:approx} that standard elliptic theory can be used to construct appropriate sub- and super-solutions.
\end{rem}

\begin{lem}[Existence]\label{lem:existDiscrete}
Let $F^h$ be a consistent, monotone, Lipschitz scheme.  Suppose also that there exist functions $v, w\in C^2(\bar{\Omega})$ such that $v$ is a strict sub-solution and $w$ a strict super-solution of the underlying PDE.  Then for sufficiently small $h>0$, the approximation scheme~\eqref{eq:approx} has a solution.
\end{lem}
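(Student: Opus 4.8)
The plan is to use a discrete version of Perron's method, mirroring the classical construction in Theorem~\ref{thm:perron} but adapted to the finite-dimensional setting of grid functions. First, I would use the consistency of the scheme together with the existence of the strict classical sub-solution $v$ and strict classical super-solution $w$ to show that, for $h$ sufficiently small, the grid restrictions $v|_{\G^h}$ and $w|_{\G^h}$ are respectively a sub-solution and a super-solution of the scheme~\eqref{eq:approx}, i.e. $F^h(x,v(x),v(x)-v(\cdot)) \leq 0$ and $F^h(x,w(x),w(x)-w(\cdot)) \geq 0$ for all $x\in\G^h$. This is where the strictness margin $\mu>0$ is essential: consistency only gives that $F^h$ applied to $v$ converges (in the $\limsup$ sense) to $F^*(\cdot,v,\nabla v,D^2v)\leq-\mu<0$, so for $h$ small enough the discrete residual is still negative; likewise for $w$. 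I would also note that $v\leq w$ on $\bar\Omega$ (this follows from the interior comparison principle, or can be arranged by subtracting a constant, exactly as in the proof of Theorem~\ref{thm:comparison}), so the admissible set below is non-empty and bounded.

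Next, define the candidate solution pointwise on the grid by
\[ u(x) = \sup\{U(x) \mid U:\G^h\to\R,\ v \leq U \leq w \text{ on } \G^h,\ U \text{ is a sub-solution of~\eqref{eq:approx}}\}. \]
Because $\G^h$ is a finite set, this supremum is over a bounded subset of $\R^{\#\G^h}$, so $u$ is a well-defined bounded grid function with $v\leq u\leq w$. I would then verify that $u$ is itself a sub-solution: this uses monotonicity of $F^h$ in its last two arguments together with the fact that the pointwise supremum of finitely many (or a bounded family of) sub-solutions is again a sub-solution — a standard closure argument, where at each grid point one takes a near-maximiser and uses that raising the values at neighbouring points only decreases $F^h$. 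Continuity (Lipschitz) of $F^h$ ensures the supremum is attained or approximated stably.

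The crux is the "bump" step: showing $u$ is also a super-solution. Suppose not; then there is some grid point $x_0$ with $F^h(x_0,u(x_0),u(x_0)-u(\cdot)) < 0$. Since $u$ is already a sub-solution everywhere and $u(x_0)<w(x_0)$ (strict inequality must hold at the violating point, because at points where $u=w$ the super-solution inequality for $w$ would be inherited — here one again uses monotonicity), I would construct a perturbation $\tilde u$ that agrees with $u$ off $x_0$ and raises $u(x_0)$ by a small amount $\delta>0$. Using the Lipschitz continuity of $F^h$ in its second and third arguments, for $\delta$ small enough $\tilde u$ remains a sub-solution at $x_0$ and at every neighbouring point (raising one value changes the residual at neighbours only by $O(\delta)$, and those residuals were $\leq 0$ — one has to be slightly careful if some neighbour's residual was exactly $0$, which is handled by instead noting $F^h(x_0,\cdot,\cdot)<0$ strictly and choosing $\delta$ so that strictness absorbs the perturbation, or by a standard argument bumping on a small neighbourhood). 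Then $\tilde u$ is an admissible competitor with $\tilde u(x_0)>u(x_0)$, contradicting the definition of $u$ as the supremum. I expect this bump step — in particular, controlling the residuals at neighbouring grid points when one value is perturbed, and cleanly handling the boundary between $\{u<w\}$ and $\{u=w\}$ — to be the main obstacle, though it is routine given Lipschitz continuity and monotonicity. Hence $u$ is both a sub- and super-solution, i.e. a solution of~\eqref{eq:approx}, completing the proof.
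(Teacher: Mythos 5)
Your proposal is correct and follows essentially the same approach as the paper's proof, namely a discrete Perron construction (taking the supremum of discrete sub-solutions bounded between the restrictions of $v$ and $w$) followed by a bump argument at a point where the super-solution inequality fails. One small simplification worth noting: since $F^h$ is monotone, raising $u$ at a single grid point $x_0$ can only decrease the residuals $F^h(x,u(x),u(x)-u(\cdot))$ at neighbouring points $x\neq x_0$, so the concern about neighbours with residual exactly zero never arises and Lipschitz continuity is needed only to absorb the perturbation in the residual at $x_0$ itself.
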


\begin{proof}
First we note that by consistency, we can restrict $v,w$ to the grid and obtain
\[ F^h(x,v(x),v(x)-v(\cdot)) < 0, \quad F^h(x,w(x),w(x)-w(\cdot))>0 \]
for any $x\in\G^h$ and sufficiently small $h$.  By the discrete comparison principle (Lemma~\ref{lem:discreteComp}), $v \leq w$.

Now define the grid function 
\bq\label{eq:discretePerron} u = \sup\left\{U \mid U(x) \leq w(x), \, F^h(x,U(x),U(x)-U(\cdot)) < 0, \,\forall x\in\G^h \right\}, \eq
which is well-defined since $v$ satisfies both of the constraints.  We claim that $u$ is a solution of~\eqref{eq:approx}.

Consider any $x\in\G^h$ and $\epsilon>0$.  From the definition of $u$, there exists a strict discrete sub-solution $u^\epsilon$ such that
\[u^\epsilon(x) > u(x)-\epsilon, \quad u^\epsilon(\cdot) \leq u(\cdot).\]
Then we can use the monotonicity of the scheme to compute
\begin{align*}
0 & > F^h(x,u^\epsilon(x),u^\epsilon(x)-u^\epsilon(\cdot))\\
  &\geq F^h(x,u(x)-\epsilon,u(x)-\epsilon-u^\epsilon(\cdot))\\
	&\geq F^h(x,u(x)-\epsilon,u(x)-\epsilon-u(\cdot)).
\end{align*}
Since $F^h$ is Lipschitz, we can take $\epsilon\to0$ to obtain
\[ F^h(x,u(x),u(x)-u(\cdot)) \leq 0 \]
and thus $u$ is a sub-solution of the scheme.

Next we suppose that there is some $y\in\G^h$ such that $F(y,u(y),u(y)-u(\cdot)) < 0$.  We will show that we can construct a larger sub-solution.  Choose $\epsilon>0$ and consider
\[ \tilde{u}(x) = \begin{cases} u(x), & x\neq y\\u(x)+\epsilon, & x=y. \end{cases} \]
We will verify that this is a sub-solution of the scheme.  First consider $x \neq y$.  Since $\tilde{u}(x)=u(x)$ and $\tilde{u}(y)>u(y)$, monotonicity of the scheme yields
\[ F^h(x,\tilde{u}(x),\tilde{u}(x)-\tilde{u}(\cdot)) \leq F^h(x,u(x),u(x)-u(\cdot)) \leq 0.  \]
Additionally, for small enough $\epsilon>0$, the Lipschitz continuity of the scheme yields
\[ F^h(y,\tilde{u}(y),\tilde{u}(y)-\tilde{u}(\cdot)) = F^h(y,u(y)+\epsilon,u(y)+\epsilon-u(\cdot)) < 0. \]

Thus $u^\epsilon$ is a sub-solution of the scheme with $\max\{u^\epsilon - u\} = \epsilon>0$.
This contradicts the definition of $u$ as the maximal sub-solution in~\eqref{eq:discretePerron} and we conclude that
\[ F^h(x,u(x),u(x)-u(\cdot)) = 0, \quad x\in \G^h. \qedhere \]
\end{proof}

As a simple consequence of the discrete comparison principle, we can also obtain bounds on the solution of the scheme.

\begin{lem}[Stability]\label{lem:stability}
Let $F^h$ be a consistent, monotone, Lipschitz scheme and let $u^h$ be a solution of~\eqref{eq:approx}.  Suppose also that there exist functions $v, w\in C^2(\bar{\Omega})$ such that $v$ is a strict sub-solution and $w$ a strict super-solution of the underlying PDE.  Then there exists a constant $M>0$, independent of $h$, such that $\|u^h\|_\infty \leq M$ for sufficiently small $h>0$.
\end{lem}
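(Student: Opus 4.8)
The plan is to derive the $L^\infty$ bound directly from the discrete comparison principle (Lemma~\ref{lem:discreteComp}), using the strict classical sub- and super-solutions of the underlying PDE as explicit barriers. First I would invoke consistency exactly as in the proof of Lemma~\ref{lem:existDiscrete}: since $v$ is a strict classical sub-solution and $w$ a strict classical super-solution, there is some $\mu>0$ with $F^*(x,v(x),\nabla v(x),D^2v(x))\le-\mu$ and $F_*(x,w(x),\nabla w(x),D^2w(x))\ge\mu$ on $\bar\Omega$; restricting these smooth functions to the grid, consistency gives that for all sufficiently small $h$ (independent of the particular solution $u^h$),
\[ F^h(x,v(x),v(x)-v(\cdot)) < 0, \qquad F^h(x,w(x),w(x)-w(\cdot)) > 0, \qquad x\in\G^h. \]
Here the only thing to be a little careful about is that the threshold $h_0$ below which these strict inequalities hold depends on $v$, $w$, and $\mu$ but not on $u^h$, so the resulting bound is uniform in $h$.

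Next I would compare $u^h$ against these barriers. Since $u^h$ solves $F^h(x,u^h(x),u^h(x)-u^h(\cdot))=0$ for every $x\in\G^h$, we have in particular the strict inequalities $F^h(x,w(x),w(x)-w(\cdot))>0=F^h(x,u^h(x),u^h(x)-u^h(\cdot))$ and $F^h(x,u^h(x),u^h(x)-u^h(\cdot))=0>F^h(x,v(x),v(x)-v(\cdot))$ at every grid point. Applying Lemma~\ref{lem:discreteComp} twice then yields $v(x)\le u^h(x)\le w(x)$ for all $x\in\G^h$. Finally I would set
\[ M = \max\left\{\sup_{\bar\Omega}|v|,\ \sup_{\bar\Omega}|w|\right\}, \]
which is finite because $v,w\in C^2(\bar\Omega)$ and $\bar\Omega$ is compact, and independent of $h$; this gives $\|u^h\|_\infty\le M$ for all sufficiently small $h$, as claimed.

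I do not expect any serious obstacle here — the lemma is essentially a corollary of the existence proof, and the real content is already packaged in Lemma~\ref{lem:discreteComp} and in the consistency-based comparison of barriers with the scheme. The one point that deserves a sentence of care is the uniformity in $h$: one must note that the smallness threshold on $h$ guaranteeing $F^h[v]<0<F^h[w]$ depends only on the fixed barriers $v,w$ (through $\mu$ and their derivatives) and not on $u^h$, so that the same $M$ works for the solution of the scheme at every admissible resolution. A secondary subtlety, already implicit in Lemma~\ref{lem:existDiscrete}, is that the discrete comparison principle as stated requires the strict inequality at every grid point; this is exactly why strict — rather than merely non-strict — classical sub- and super-solutions are assumed in the hypotheses, and it is what makes the argument go through cleanly.
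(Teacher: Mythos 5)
Your proposal matches the paper's proof essentially verbatim: consistency forces $v$ and $w$ to be strict discrete sub- and super-solutions for small $h$, the discrete comparison principle (Lemma~\ref{lem:discreteComp}) then sandwiches $u^h$ between them, and $M = \max\{\|v\|_\infty,\|w\|_\infty\}$ is independent of $h$. Your added remarks about the uniformity of the threshold $h_0$ and the necessity of strictness for the discrete comparison principle are correct and are exactly the points the paper leaves implicit.
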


\begin{proof}
As in the previous lemma, $v$ and $w$ are strict sub- and super-solutions of the scheme for small enough $h>0$.  By the discrete comparison principle (Lemma~\ref{lem:discreteComp}), we have $v \leq u^h \leq w$ and thus $\|u^h\|_\infty \leq \max\{\|v\|_\infty,\|w\|_\infty\}$.
\end{proof}

\subsection{Convergence}\label{sec:convergence}

The concepts of consistency, monotonicity, stability, and interior comparison can now be used to prove that elliptic approximation schemes converge to the viscosity solution of the underlying PDE.  This is accomplished through a slight modification of the well-known Barles-Souganidis convergence framework.  While the proof below is similar to those in~\cite{BSnum,FOFiltered}, those works implicitly required  the approximation scheme to be defined throughout the domain.  

Here we are interested in schemes that are defined only on a finite set of discretisation points.  In order to modify the convergence proof accordingly, we need to extend the discrete grid solution into the entire domain $\bar{\Omega}$.  To this end, we let $U^h:\G^h\to\R$ be a solution of the approximation scheme on the grid.  Using this, we define the piecewise constant extension
\bq\label{eq:extension} u^h(x) = \max\left\{U^h(y) \mid y\in\G^h, \, \abs{y-x} = \min\limits_{z\in\G^h}\abs{z-x}\right\}. \eq
This is simply a nearest neighbours extension, which accounts for the situation where multiple discretisation points are equidistant.

\begin{thm}[Convergence of Schemes]\label{thm:convergence}
Consider a degenerate elliptic PDE~\eqref{eq:PDEGen} on a bounded domain~$\Omega$.  Suppose that the PDE operator satisfies an interior comparison principle and that there exist strict classical sub- and super-solutions to the PDE.  Let $F^h$ be any consistent, monotone, Lipschitz scheme and  $U^h$ any solution of the scheme.  Then for any interior point $x\in\Omega$, the piecewise constant extension $u^h(x)$ converges to the viscosity solution of the underlying PDE as $h\to0$.
\end{thm}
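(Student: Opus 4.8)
The plan is to run the Barles--Souganidis half-relaxed-limits argument, but with the \emph{interior} comparison principle (Theorem~\ref{thm:comparison}) playing the role of the usual global one, so that the conclusion is only claimed on $\Omega$. First I would invoke Lemma~\ref{lem:stability}: the strict classical sub- and super-solutions give a bound $\|u^h\|_\infty\le M$ uniform in $h$, so the half-relaxed limits
\[
\bar u(x)=\limsup_{h\to0,\,y\to x}u^h(y),\qquad
\underline u(x)=\liminf_{h\to0,\,y\to x}u^h(y)
\]
are finite on $\bar\Omega$, with $\bar u$ upper semi-continuous, $\underline u$ lower semi-continuous, and $\underline u\le\bar u$. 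The goal is to show $\bar u$ is a subsolution and $\underline u$ a supersolution of~\eqref{eq:MA} in the sense of Definition~\ref{def:subsuperWeak}.

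For the subsolution claim, fix $\phi\in C^2(\bar\Omega)$ with $\bar u-\phi$ having a local maximum at $x_0\in\bar\Omega$; after subtracting $|x-x_0|^4$ and adding a constant we may assume the maximum is strict on some neighbourhood $N\ni x_0$ and $\bar u(x_0)=\phi(x_0)$. The standard half-relaxed-limit lemma produces $h_k\to0$ and $x_k\to x_0$ with $u^{h_k}(x_k)\to\bar u(x_0)$ and $u^{h_k}-\phi$ maximised over $N$ at $x_k$. Choosing a grid point $y_k\in\G^{h_k}$ that realises the nearest-neighbour maximum defining $u^{h_k}(x_k)$ in~\eqref{eq:extension}, we get $U^{h_k}(y_k)=u^{h_k}(x_k)$ and $|y_k-x_k|\le\min_{z\in\G^{h_k}}|z-x_k|\to0$ by~\eqref{eq:resolution}, so $y_k\to x_0$. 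Since $u^{h_k}=U^{h_k}$ on the grid, for grid points $z\in N$ (in particular the stencil of $y_k$ once $k$ is large) one has $U^{h_k}(y_k)-U^{h_k}(z)\ge\phi(x_k)-\phi(z)$. Monotonicity (non-decreasing in the increment argument) together with $F^{h_k}[U^{h_k}]\equiv0$ then gives
\[
0=F^{h_k}\!\big(y_k,U^{h_k}(y_k),U^{h_k}(y_k)-U^{h_k}(\cdot)\big)\ \ge\ F^{h_k}\!\big(y_k,\phi(x_k)+\xi_k,\phi(x_k)-\phi(\cdot)\big),
\]
with $\xi_k:=U^{h_k}(y_k)-\phi(x_k)\to0$. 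Applying the $\liminf$ half of consistency (Definition~\ref{def:consistency}, scheme point $y_k$, test point $x_k$) yields $F_*(x_0,\phi(x_0),\nabla\phi(x_0),D^2\phi(x_0))\le\liminf_k F^{h_k}(\cdots)\le0$, so $\bar u$ is a subsolution. The supersolution claim for $\underline u$ is symmetric: we may assume $D^2\phi(x_0)>0$ (otherwise $-\lambda_1(D^2\phi(x_0))>0$ forces $F^*>0$ automatically), the min of $u^{h_k}-\phi$ transfers to grid points near $x_k$, using $U^{h_k}(y_k)\le u^{h_k}(x_k)$ one checks $U^{h_k}(y_k)\to\underline u(x_0)$ and $U^{h_k}(y_k)-U^{h_k}(z)\le\phi(x_k)-\phi(z)$, monotonicity now gives $0\le F^{h_k}(y_k,\phi(x_k)+\xi_k,\phi(x_k)-\phi(\cdot))$, and the $\limsup$ half of consistency gives $F^*(x_0,\phi(x_0),\nabla\phi(x_0),D^2\phi(x_0))\ge0$.

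Then I would invoke the interior comparison principle: $\bar u$ is a bounded upper semi-continuous subsolution and $\underline u$ a bounded lower semi-continuous supersolution of~\eqref{eq:MA}, so Theorem~\ref{thm:comparison} gives $\bar u\le\underline u$ on $\Omega$; combined with $\underline u\le\bar u$ this forces $\bar u=\underline u=:u$ on $\Omega$. Being continuous there and simultaneously a sub- and super-solution, $u$ is a viscosity solution of~\eqref{eq:MA} in $\Omega$, hence by Theorems~\ref{thm:exist} and~\ref{thm:uniqueness} it coincides on $\Omega$ with the unique viscosity solution. Finally, for any $x\in\Omega$, $\underline u(x)\le\liminf_h u^h(x)\le\limsup_h u^h(x)\le\bar u(x)$ and $\bar u(x)=\underline u(x)=u(x)$, so $u^h(x)\to u(x)$ (indeed $\bar u=\underline u$ continuous gives locally uniform convergence on $\Omega$).

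\textbf{Main obstacle.} The one genuinely new ingredient compared with the textbook Barles--Souganidis proof is the bookkeeping of Step~2--3: transferring a local extremum of the \emph{piecewise-constant} extension $u^h$ to a scheme inequality at a nearby grid point. This requires exploiting the max-convention in~\eqref{eq:extension} together with grid resolution~\eqref{eq:resolution} so that the chosen grid value $U^{h_k}(y_k)$ still converges to $\bar u(x_0)$ (resp.\ $\underline u(x_0)$) and the discrete increments dominate (are dominated by) the $\phi$-increments, and then pairing the correct half of the consistency condition with the correct (sub/super) direction. Everything else---finiteness of the half-relaxed limits from stability, the half-relaxed-limit lemma, and the automatic supersolution inequality when $D^2\phi\not\ge0$---is routine; the only structural departure is that comparison, and hence convergence, is confined to the interior $\Omega$, with no information asserted at $\partial\Omega$.
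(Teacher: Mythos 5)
Your argument is the paper's argument: half-relaxed limits, stability from Lemma~\ref{lem:stability}, transfer of the extremum of the piecewise-constant extension to a nearby grid point, monotonicity to move to $\phi$-increments, consistency to pass to the limit, and finally the interior comparison principle. The bookkeeping you flag as the ``main obstacle'' (tying the maximiser $x_k$ of $u^{h_k}-\phi$ to a grid point $z_n$ with the same value, then noting $U^{h_k}(z_n)-U^{h_k}(\cdot)\ge \phi(x_k)-\phi(\cdot)$ once the stencil falls inside $N$) matches the paper line-for-line.

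The one place where you diverge is the endgame, and that divergence contains a small but genuine gap. You apply interior comparison to the pair $(\bar u,\underline u)$ to get $\bar u=\underline u=:u$ on $\Omega$, and then assert that $u$ ``is a viscosity solution of~\eqref{eq:MA} in $\Omega$,'' invoking Theorem~\ref{thm:uniqueness} to identify it with the unique solution. But Theorem~\ref{thm:uniqueness} concerns viscosity solutions in the sense of Definition~\ref{def:viscosity}, which requires the semi-continuous envelopes $u^*,u_*$ on $\bar\Omega$ to be sub- and super-solutions of the Dirichlet problem. For your $u=\bar u|_\Omega$, the envelope $u^*$ is the $\limsup$ of $\bar u$ from the interior and may be \emph{strictly smaller} than $\bar u$ on $\partial\Omega$ (and $u_*$ strictly larger than $\underline u$ there), so the sub/super-solution properties you established for $\bar u,\underline u$ do not automatically transfer to $u^*,u_*$; checking it would take an extra argument (and for the general degenerate elliptic PDE in the theorem statement, not just the Gaussian curvature operator, it is not obvious). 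The paper's endgame avoids this entirely: it takes the viscosity solution $w$ whose existence is presupposed, notes $w^*$ is a sub-solution and $w_*$ a super-solution, applies interior comparison \emph{twice} ($w^*\le\underline u$ and $\bar u\le w_*$ on $\Omega$), and combines with $\underline u\le\bar u$ and $w_*=w=w^*$ in $\Omega$ (itself a consequence of interior comparison) to conclude $\underline u=\bar u=w$ there. Replacing your single-comparison-plus-identification step with this double comparison against $w^*,w_*$ closes the gap and gives exactly the paper's proof.
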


\begin{rem}
The above theory provides existence but not necessarily uniqueness of solutions $U^h$ to the scheme $F^h = 0$.  However, all solutions converge in the limit as $h\to0$.
\end{rem}

\begin{rem}
As in~\cite{FOFiltered}, it is sufficient to use a perturbation of a monotone scheme, which allows for the construction of convergent, formally higher-order approximations.
\end{rem}

\begin{rem}
If the PDE satisfies a comparison principle in the closure of the domain, then the scheme converges in $\bar{\Omega}$.
\end{rem}

\begin{proof}[Proof of Theorem~\ref{thm:convergence}]
The key to the proof is to extend the approximation operator $F^h$ from the grid onto the entire domain.  

Define
\[ \bar{u}(x) = \limsup_{h\to0,y\to x} u^h(y) \in USC(\bar{\Omega}), \]
\[ \underline{u}(x) = \liminf_{h\to0,y\to x} u^h(y) \in LSC(\bar{\Omega}). \]
Clearly $\underline{u}(x) \leq  \bar{u}(x)$ everywhere in $\bar{\Omega}$.

From Lemma~\ref{lem:stability}, both $\bar{u}$ and $\underline{u}$ are bounded.  

Consider any $x_0\in\bar{\Omega}$ and $\phi\in C^2$ such that $x_0$ is a strict global maximum of $\bar{u}-\phi$ with $\bar{u}(x_0)=\phi(x_0)$.
Maxima of upper semi-continuous functions are stable and thus it is possible to find subsequences 
\[ h_n\to0, \quad y_n \to x_0, \quad u^{h_n}(y_n)\to\bar{u}(x_0) \]
where $y_n$ is a maximiser of $u^{h_n}-\phi$.  
See, for example~\cite[Lemma 2]{FOFiltered}.

Since $u^h$ is a nearest neighbours extension of the grid solution and $h$ measures the resolution of the underlying grid or point cloud~\eqref{eq:resolution}, we can also find $z_n\in\G^h$ such that $z_n\to x_0$ and $u^{h_n}(z_n) = u^{h_n}(y_n)$.  Defining $\xi_n = u^{h_n}(y_n)-\phi(y_n)\to0$, we have
\[
u^{h_n}(y_n) = \bar{u}(x_0) + \xi_n
\]
From the definition of the various subsequences, we also obtain
\[ u^{h_n}(z_n)-u^{h_n}(\cdot) = u^{h_n}(y_n)-u^{h_n}(\cdot) \geq \phi(y_n)-\phi(\cdot). \]

Then with a slight modification of~\cite{BSnum},  we can use monotonicity to verify that
\begin{align*}
0 &= F^{h_n}(z_n,u^{h_n}(z_n),u^{h_n}(z_n)-u^{h_n}(\cdot))\\
  &\geq F^{h_n}(z_n,\phi(y_n)+\xi_n,\phi(y_n)-\phi(\cdot)).
\end{align*}

From consistency of the approximation, we obtain
\begin{align*}
0 &\geq \liminf\limits_{n\to\infty}F^{h_n}(z_n,\phi(y_n)+\xi_n,\phi(y_n)-\phi(\cdot))\\
  &\geq \liminf\limits_{h\to0,y\to x_0, z\in\G^h\to x_0,\xi\to0}F^{h_n}(z,\phi(y)+\xi,\phi(y)-\phi(\cdot))\\
	&\geq F_*(x_0,\phi(x_0),\nabla\phi(x_0),D^2\phi(x_0)).
\end{align*}
Thus $\bar{u}$ is a sub-solution of the PDE.  We can similarly show that $\underline{u}$ a super-solution.

If $u$ is the viscosity solution of the PDE then $u^*$ is a sub-solution and $u_*$ is a super-solution.  For $x\in\Omega$, two applications of the comparison principle yields
\[ u(x) \leq u^*(x) \leq \underline{u}(x) \leq \bar{u}(x) \leq u_*(x) \leq u(x). \]
We conclude that $u = \bar{u} = \underline{u}$ in $\Omega$ and therefore
$u^h(x)$ converges to the viscosity solution at interior points $x\in\Omega$. 
\end{proof}

An immediate consequence of this result is convergence in $L^p$.
\begin{cor}[Convergence in $L^p$]\label{cor:Lp}
Under the hypotheses of Theorem~\ref{thm:filtered}, $u^h$ converges to $u$ in $L^p$ for any $0<p<\infty$.
\end{cor}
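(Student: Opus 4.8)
The plan is to deduce $L^p$ convergence from the pointwise interior convergence already established in Theorem~\ref{thm:convergence}, together with the uniform $L^\infty$ bound from Lemma~\ref{lem:stability}. The key observation is that Theorem~\ref{thm:convergence} gives $u^h(x)\to u(x)$ for every $x\in\Omega$, and the boundary $\partial\Omega$ has Lebesgue measure zero (being the boundary of a bounded convex domain), so $u^h\to u$ almost everywhere on $\Omega$. Combined with the uniform bound $\|u^h\|_\infty\le M$ and $\|u\|_\infty\le M$, the dominated convergence theorem immediately yields $\int_\Omega |u^h-u|^p\,dx\to0$ for any $0<p<\infty$.

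More precisely, first I would recall from Lemma~\ref{lem:stability} that there is a constant $M>0$, independent of $h$, with $\|u^h\|_\infty\le M$ for all sufficiently small $h$; since $u$ is itself the limit of the $u^h$ at interior points (or directly bounded between the strict sub- and super-solutions $v,w$ restricted appropriately), we also have $\|u\|_{L^\infty(\Omega)}\le M$. Hence $|u^h-u|^p \le (2M)^p$ pointwise a.e.\ on $\Omega$, and the constant function $(2M)^p$ is integrable on the bounded domain $\Omega$. Second, Theorem~\ref{thm:convergence} provides $u^h(x)\to u(x)$ for every $x\in\Omega$; since $|\partial\Omega|=0$ we have $u^h\to u$ a.e.\ on $\bar\Omega$, and therefore $|u^h-u|^p\to0$ a.e. Applying the dominated convergence theorem gives $\|u^h-u\|_{L^p(\Omega)}^p=\int_\Omega|u^h-u|^p\,dx\to0$, which is the claim. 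Note the corollary statement references Theorem~\ref{thm:filtered}; this should read Theorem~\ref{thm:convergence} (or whichever filtered-scheme theorem supersedes it), and the argument is identical in either case since all that is used is pointwise convergence plus a uniform bound.

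The main obstacle, such as it is, is simply ensuring the pointwise convergence is genuinely almost-everywhere on the full domain of integration rather than merely on the open interior; this is handled by the null-set observation for $\partial\Omega$, which relies on Hypothesis~(H1). There is no analytic difficulty here — the estimate is soft, requiring only that $\Omega$ be bounded so that constants are integrable and that the approximations be uniformly bounded so that a dominating function exists. One minor point worth stating explicitly in the proof is that the convergence in Theorem~\ref{thm:convergence} is of the piecewise-constant extension $u^h$ defined in~\eqref{eq:extension}, so that it is this extension (a genuine function on $\bar\Omega$) whose $L^p$ norm we are controlling; since it agrees with the grid solution $U^h$ at discretisation points and is measurable and uniformly bounded, the dominated convergence argument applies verbatim.
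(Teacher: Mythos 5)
Your proof is correct, but takes a genuinely different route from the paper's. The paper does not invoke the dominated convergence theorem; instead it splits $\Omega$ into a boundary collar $\Omega_\epsilon = \{x\in\Omega : \mathrm{dist}(x,\partial\Omega)<\epsilon\}$ and its complement $\Omega\setminus\Omega_\epsilon$, bounds the integral over the collar by $(M+\sup|u|)^p\,|\Omega_\epsilon|$ using the uniform $L^\infty$ bound, and on $\Omega\setminus\Omega_\epsilon$ uses that $\lim_{h\to 0}\sup_{\Omega\setminus\Omega_\epsilon}|u-u^h|=0$, i.e.\ \emph{locally uniform} interior convergence — a strengthening of Theorem~\ref{thm:convergence}'s literal statement that follows from the Barles--Souganidis argument (the half-relaxed limits $\bar u = \underline u = u$ are continuous on $\Omega$, forcing local uniformity), but which the paper invokes implicitly. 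Then $\epsilon\to 0$ finishes. Your DCT argument, by contrast, only uses what Theorem~\ref{thm:convergence} actually states — pointwise convergence on $\Omega$ — together with the same uniform $L^\infty$ bound from Lemma~\ref{lem:stability} and the fact that $\partial\Omega$ is Lebesgue-null; it needs nothing about modes of convergence beyond pointwise. In that sense your proof is slightly more faithful to the stated hypotheses and slightly softer, while the paper's collar-splitting is more quantitative in spirit (it isolates the boundary layer explicitly, which also serves the exposition, since the paper later reports a visible boundary layer in the numerics). One small note: the reference to Theorem~\ref{thm:filtered} in the corollary statement is a forward reference to the specific Gaussian-curvature scheme, not a typo, but as you observe the argument is the same under either set of hypotheses.
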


\begin{proof}
Choose any $\epsilon>0$ and let $\Omega_\epsilon$ be an $\epsilon$-neighbourhood of the boundary $\partial\Omega$:
\[ \Omega_\epsilon = \{x\in\Omega \mid \text{dist}(x,\partial\Omega)<\epsilon\}. \]
Since $\|u^h\|_\infty < M$ (Lemma~\ref{lem:stability}) and $\Omega$ is bounded, we can bound the $L^p$ error by
\begin{align*}
\lim_{h\to0}\|u-u^h\|_p^p &= \lim_{h\to0}\left(\int_{\Omega\backslash\Omega_{\epsilon}}\abs{u(x)-u^h(x)}^p\,dx + \int_{\Omega_\epsilon}\abs{u(x)-u^h(x)}^p\,dx\right)\\
  &\leq \abs{\Omega\backslash\Omega_{\epsilon}}\lim_{h\to0}\sup\limits_{\Omega\backslash\Omega_{\epsilon}}\abs{u(x)-u^h(x)}^p + \left(M+\sup\abs{u}\right)^p\abs{\Omega_\epsilon}\\
	&= \left(M+\sup\abs{u}\right)^p\abs{\Omega_\epsilon}.
\end{align*}
Since $\Omega$ is bounded, we can take $\epsilon\to0$ to obtain
\[ \lim_{h\to0}\|u-u^h\|_p^p = 0. \qedhere \]
\end{proof}

\section{Numerical Method for Prescribed Gaussian Curvature}\label{sec:approx}

Now we can use the results of the previous sections to produce a provably convergent method for computing generalised solutions of the prescribed Gaussian curvature equation.  For simplicity and brevity, we will describe the scheme in 2D, although the techniques and proofs can be adapted to higher dimensions.

\subsection{Discretisation in interior}\label{sec:discInterior}
We begin by reviewing the techniques needed to produce a monotone discretisation of the equation.

At interior points, we can rely on a slight modification of monotone schemes that have previously been proposed for the solution of \MA equations.  
We briefly describe the monotone scheme that we use for the prescribed Gaussian curvature equation, and refer to~\cite{FroeseMeshfreeEigs,FO_MATheory} for further details.  This requires constructing monotone approximations for the terms
\[ -{\det}^+(D^2u(x)), \quad \kappa(x)R(\nabla u(x)), \quad -\lambda_1(D^2u(x)). \]

Monotone approximations for the determinant of the Hessian have been thoroughly described in~\cite{FO_MATheory}.  Briefly, this approximation is based on the characterisation
\[ -{\det}^+(D^2u) =
-\min\limits_{\{\nu_1\ldots\nu_n\}\in V} 
\prod\limits_{i=1}^{n} 
\max\{u_{\nu_i\nu_i} ,0\}
 \]
where $V$ is the set of all orthogonal coordinate systems in $\R^n$ and $u_{\nu\nu}$ is the second directional derivative of $u$ in the direction $\nu$.

Instead of considering all orthogonal coordinate systems, we consider a finite subset $V^h$ of $V$, which necessarily introduces some angular resolution error $d\theta$ into the scheme. In our implementation, we consider the subset
\[ V^h = \{(\cos(j\,d\theta),\sin(j\,d\theta)), (-\sin(j\,d\theta),\cos(j\,d\theta))\}, \quad j = 1, \ldots, \frac{\pi}{2d\theta} \]
where we take $d\theta \approx 2\pi h^{1/4}$.

If the direction $\nu$ is of the form $x_j-x_i$ where $x_i$ and $x_j$ are different grid points on a Cartesian grid, then
the second derivatives can be discretised using centred differences.
\[u_{\nu\nu}(x_0) \approx \frac{1}{\abs{\nu}^2}\left(u(x_0 + \nu) + u(x_0 - \nu) - 2u(x_0)\right).\]
For more general directions or grids, the scheme can be modified as in~\cite{FroeseMeshfreeEigs}. To accomplish this, we 
consider a search neighbourhood of radius
\[ \delta = h(1+\cos(d\theta/2)\cot(d\theta/2)+\sin(d\theta/2)). \]
Neighbouring grid points can be written in polar coordinates $(r,\phi)$ with respect to the axes defined by the lines $x_0 + t\nu$, $x_0 + t\nu^\perp$.  We seek one neighbouring discretisation point in each quadrant described by these axes, with each neighbour aligning as closely as possible with the line $x_0 + t\nu$.  That is, we select the neighbours
\[ x_j \in \argmin\left\{{\sin^2\phi} \mid (r,\phi)\in\G^h\cap B(x_0,\delta) \text{ is in the $j$th quadrant}\right\}\]
for $j = 1, \ldots, 4$.  See Figure~\ref{fig:stencil}.
Because of the ``wide-stencil'' nature of these approximations (since the search radius $\delta \gg h$), care must be taken near the boundary.  In order to preserve consistency up to the boundary, it is necessary that the boundary be more highly resolved than the interior ($h_B \ll h$).

\begin{figure}[htp]
\centering
\subfigure[]{
\includegraphics[width=0.4\textwidth]{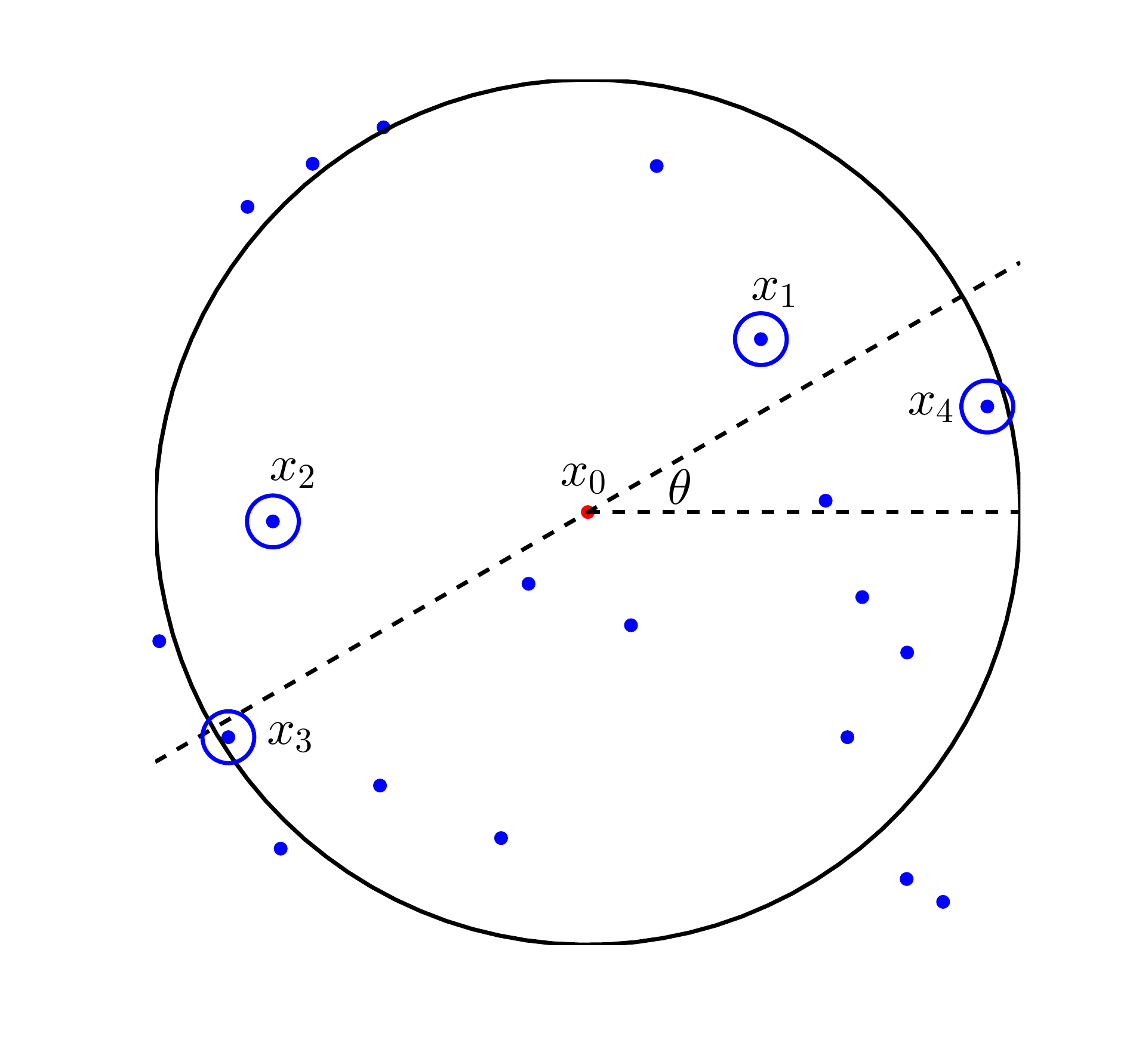}\label{fig:stencil1}}
\subfigure[]{
\includegraphics[width=0.47\textwidth]{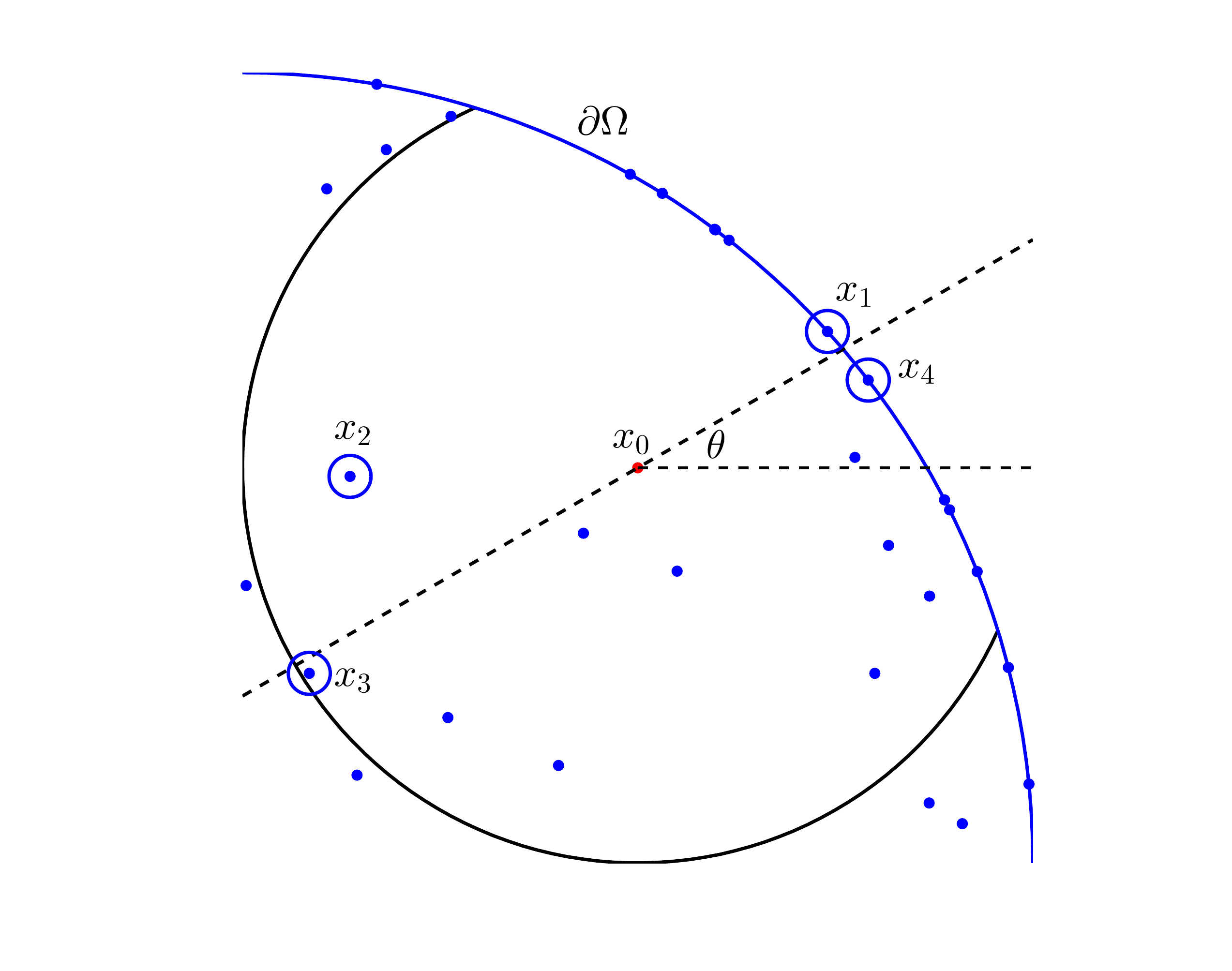}\label{fig:stencil2}}
\caption{A finite difference stencil chosen from a point cloud \subref{fig:stencil1}~in the interior and \subref{fig:stencil2}~near the boundary.}
\label{fig:stencil}
\end{figure}

Then a consistent, monotone approximation of $u_{\nu\nu}$ is
\[ \Dt_{\nu\nu}^hu(x_0) = \sum\limits_{j=1}^4 a_j(u(x_j)-u(x_0)) \]
where we use the polar coordinate characterisation of the neighbours to define
\[ S_j =  r_j\sin\phi_j, \quad C_j = r_j\cos\phi_j\]
and the coefficients are given by
\[\begin{split}
a_1 &= \frac{2S_4(C_3S_2-C_2S_3)}{(C_3S_2-C_2S_3)(C_1^2S_4-C_4^2S_1)-(C_1S_4-C_4S_1)(C_3^2S_2-C_2^2S_3)}\\
a_2 &= \frac{2S_3(C_1S_4-C_4S_1)}{(C_3S_2-C_2S_3)(C_1^2S_4-C_4^2S_1)-(C_1S_4-C_4S_1)(C_3^2S_2-C_2^2S_3)}\\
a_3 &= \frac{-2S_2(C_1S_4-C_4S_1)}{(C_3S_2-C_2S_3)(C_1^2S_4-C_4^2S_1)-(C_1S_4-C_4S_1)(C_3^2S_2-C_2^2S_3)}\\
a_4 &= \frac{-2S_1(C_3S_2-C_2S_3)}{(C_3S_2-C_2S_3)(C_1^2S_4-C_4^2S_1)-(C_1S_4-C_4S_1)(C_3^2S_2-C_2^2S_3)}.
\end{split}
\]
Using this, we define the following discrete approximation to the first term in the convexified \MA operator:
\[ {\det}^h(D^2u(x_0)) = \min\limits_{\{\nu_1,\nu_2\}\in V^h} \prod\limits_{i=1}^{2} 
\max\{\Dt^h_{\nu_i\nu_i}u(x_0) ,0\}
 \]

Next, we consider the term
\[ \kappa(x)R(\nabla u(x)) = \kappa(x) (1 + \abs{\nabla u(x)}^2)^{(n+2)/2}. \]
Since the curvature $\kappa$ is non-negative, it is only necessary to consider a monotone discretisation of the gradient term.  In this case, we can make use of the structure of the function $R$ and use a monotone scheme for $\abs{\nabla u}$ that has previously been used to solve the Eikonal equation~\cite{Zhao}.
When the grid is uniform, the approximation is
\[
\abs{\nabla u(x_i)}^2 \approx \sum\limits_{i=1}^n \max\left\{\frac{u(x)-u(x+he_i)}{h},\frac{u(x)-u(x-he_i)}{h}, 0\right\}^2.
\]
More generally, we can consider the four neighbours $x_j^i$, $j = 1, \ldots, 4$ that best align with the coordinate direction $e_i$.  Using the above notation, 
\[
\begin{split}
\abs{\Dt^h u(x_0)}^2 = \sum\limits_{i=1}^2 \max\left\{b_1^i(u(x_1^i)-u(x_0))+b_4^i(u(x_4^i)-u(x_0)), \right.\\ \left. b_2^i(u(x_2^i)-u(x_0))+b_3^i(u(x_3^i)-u(x_0)), 0\right\}^2
\end{split}\]
where the cofficients are given by

\begin{align*}
b_1= \frac{S_4}{S_1C_4-C_1S_4} \phantom{phantom}& 
b_2 = -\frac{S_3}{S_2C_3-C_2S_3}\\
b_3 = \frac{S_2}{S_2C_3-C_2S_3} \phantom{phantom} &
b_4 = -\frac{S_1}{S_1C_4-C_1S_4}.
\end{align*}

Finally, we discretise the term
\[ -\lambda_1(D^2u). \]
Following Oberman~\cite{ObermanEigenvalues}, we can rewrite the smallest eigenvalue as
\[ \lambda_1(D^2u) = \min\limits_{\abs{\nu} = 1} u_{\nu\nu}. \]
As with the \MA equation, we can approximate this using a finite set of directions.  
\[ \lambda_1^h(D^2u(x_0)) = \min \left\{\Dt^h_{\nu_i\nu_i}u(x_0) \mid \nu_i = i\,d\theta, \, i = 1, \ldots, \frac{\pi}{d\theta}\right\}. \]

Finally, we can define the overall approximation scheme at interior points $x\in\G^h\cap\Omega$ by
\bq\label{eq:approxInt}
\begin{split}
F^h&(x,u(x),u(x)-u(\cdot)) = \\ &\max\left\{-{\det}^h(D^2u(x)) + \kappa(x) \left(1 + \abs{\Dt^h u(x)}^2\right)^{(n+2)/2}, -\lambda_1^h(D^2u(x))\right\},
\end{split}\eq
which involves only the monotone operations of addition, multiplication, and computing the maximum.  As long as the boundary of the domain is sufficiently well-resolved (with spatial resolution on the order of $h\,d\theta$), the resulting scheme is consistent and monotone~\cite[Theorem~16]{FroeseMeshfreeEigs}.

\subsection{Discretisation at boundary}\label{sec:discBoundary}

We also need to define the approximation scheme at the boundary.  At first glance, this appears very challenging since the correct boundary values are not known \emph{a priori} and the weak formulation is influenced by the behaviour of higher-derivatives at the boundary.  Remarkably, though, it is sufficient to enforce the Dirichlet boundary condition in a strong sense:
\bq\label{eq:approxBC}
F^h(x,u(x),u(x)-u(\cdot)) = u(x)-g(x), \quad x \in \G^h\cap\partial\Omega.
\eq

If the weak solution of the PDE is discontinuous at the boundary, this will necessarily introduce a boundary layer into the solution.  However, our interior convergence result (and $L^p$ convergence) will still hold.  To demonstrate this, it is necessary to show that the strong form of the boundary condition actually satisfies the necessary consistency condition.

\begin{lem}[Consistency with the weak Dirichlet problem]\label{lem:consistency}
The approximation scheme $F^h$ defined by~\eqref{eq:approxInt}-\eqref{eq:approxBC} is consistent, monotone, and Lipschitz.
\end{lem}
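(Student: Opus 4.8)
The plan is to verify each of the three properties—monotonicity, Lipschitz continuity, and consistency—separately, treating interior points and boundary points as distinct cases. Monotonicity and Lipschitz continuity are essentially bookkeeping: at interior points the scheme $F^h$ defined by~\eqref{eq:approxInt} is built from the operations of addition, multiplication by the nonnegative quantities $\kappa(x)$ and the cofficients $a_j, b_j$ arising in the directional-derivative approximations, taking maxima, and composing with the increasing function $t\mapsto(1+t)^{(n+2)/2}$; all of these operations preserve monotonicity in the final two arguments (with the caveat that the directional-derivative stencils $\Dt^h_{\nu\nu}$ must have nonnegative coefficients, which is guaranteed by the stencil-selection procedure as established in~\cite{FroeseMeshfreeEigs,FO_MATheory}). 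Local Lipschitz continuity follows because each building block is locally Lipschitz and the composition and maximum of locally Lipschitz functions is locally Lipschitz. At boundary points the scheme is simply $u(x)-g(x)$, which is trivially monotone (increasing in $u(x)$, constant in the differences) and globally Lipschitz. So these two properties reduce to invoking the already-known monotonicity and Lipschitz properties of the interior scheme and observing the boundary term is harmless.

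The substantive point is consistency, and within that, consistency \emph{at the boundary}, since interior consistency is already known (see~\cite[Theorem~16]{FroeseMeshfreeEigs} together with the angular-resolution choice $d\theta\approx 2\pi h^{1/4}$). I would fix $x_0\in\partial\Omega$, a smooth test function $\phi$, and examine the two limits in Definition~\ref{def:consistency}. For points $z\in\G^h$ approaching $x_0$, the value $F^h(z,\phi(y)+\xi,\phi(y)-\phi(\cdot))$ is $(\phi(y)+\xi)-g(z)$ if $z\in\partial\Omega$, and is the interior expression~\eqref{eq:approxInt} evaluated at $z$ if $z\in\Omega$. The key geometric input is the high boundary resolution $h_B\ll h$ from~\eqref{eq:bdyRes}: because boundary discretisation points are much denser than interior ones, any grid point $z\to x_0$ is in fact a \emph{boundary} grid point for $h$ small (its nearest neighbours on the grid used to evaluate $F^h$ at a boundary node are boundary nodes). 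Hence along the relevant subsequences $F^h(z,\phi(y)+\xi,\phi(y)-\phi(\cdot))\to\phi(x_0)-g(x_0)$. Comparing with $F^*$ and $F_*$ from~\eqref{eq:MAupper}--\eqref{eq:MAlower}: the $\limsup$ must be $\le F^*(x_0,\phi(x_0),\nabla\phi(x_0),D^2\phi(x_0)) = \max\{\phi(x_0)-g(x_0),\,\max\{-{\det}^+(D^2\phi(x_0))+\kappa(x_0)R(\nabla\phi(x_0)),-\lambda_1(D^2\phi(x_0))\}\}\ge\phi(x_0)-g(x_0)$, so the upper inequality holds with room to spare. For the $\liminf$ we need it $\ge F_*(x_0,\phi(x_0),\nabla\phi(x_0),D^2\phi(x_0)) = \min\{\phi(x_0)-g(x_0),\,\max\{\cdots\}\}$, and since the left side equals $\phi(x_0)-g(x_0)\ge$ this minimum, the lower inequality also holds.

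The main obstacle—and the step requiring genuine care—is justifying that boundary grid points are genuinely ``internal'' to the boundary discretisation, i.e.\ that the wide-stencil interior operator is never actually invoked at a point collapsing onto $\partial\Omega$ in a way that would pollute the limit, and conversely that interior grid points approaching $x_0$ still see the correct interior consistency. This is precisely where the resolution hypotheses~\eqref{eq:resolution}--\eqref{eq:bdyRes} and the relation $h_B\ll h$ (spatial boundary resolution on the order of $h\,d\theta$) must be used quantitatively: one needs that the search radius $\delta = h(1+\cos(d\theta/2)\cot(d\theta/2)+\sin(d\theta/2))$ around any boundary node contains enough boundary points in each quadrant that the stencil closes \emph{on the boundary}, so that~\eqref{eq:approxBC} is self-consistent, while the interior stencils used in~\eqref{eq:approxInt} remain well-defined and consistent up to the boundary. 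Once this geometric fact is in place—most of it borrowed from~\cite[Theorem~16]{FroeseMeshfreeEigs}—the consistency inequalities at the boundary follow from the elementary observation above that $\phi(x_0)-g(x_0)$ lies between $F_*$ and $F^*$ at boundary points, and interior consistency is inherited directly from the cited results.
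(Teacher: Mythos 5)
Your treatment of monotonicity and Lipschitz continuity, and your decision to defer interior consistency to \cite[Theorem~16]{FroeseMeshfreeEigs}, match the paper's proof. The problem is in your boundary consistency argument. The claim that ``any grid point $z\to x_0$ is in fact a \emph{boundary} grid point for $h$ small'' is false, and no amount of boundary refinement $h_B\ll h$ makes it true: the consistency definition takes the $\limsup$/$\liminf$ over \emph{all} sequences $z_n\in\G^{h_n}$ with $z_n\to x_0$, and these include sequences of interior grid points marching toward the boundary point $x_0$ (indeed such sequences always exist by~\eqref{eq:resolution}). Along those sequences $F^{h_n}$ is the interior operator~\eqref{eq:approxInt}, whose limit is $F(x_0,\phi(x_0),\nabla\phi(x_0),D^2\phi(x_0))$, not $\phi(x_0)-g(x_0)$. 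Having discarded these sequences, you compute the $\limsup$ and $\liminf$ as if both equalled $\phi(x_0)-g(x_0)$, which is what lets you say the upper inequality holds ``with room to spare.'' It does not: the true $\limsup$ is $\max\{\phi(x_0)-g(x_0),\,F(x_0,\ldots)\}$, which is exactly $F^*$ (equality, no room), and the true $\liminf$ is $\min\{\phi(x_0)-g(x_0),\,F(x_0,\ldots)\} = F_*$. The whole reason the envelopes $F^*$ and $F_*$ carry the $\max$/$\min$ with the boundary term is to absorb precisely the interior-sequence contribution you dropped. The paper's proof computes both limits separately --- boundary sequences give $\phi(x)-g(x)$, interior sequences give $F(x,\phi(x),\nabla\phi(x),D^2\phi(x))$ by interior consistency and Lipschitz continuity --- and then observes that combining them yields exactly $F^*$ and $F_*$.

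Your conclusion happens to survive because $F_*\le\phi(x_0)-g(x_0)\le F^*$ and because the interior limit is also sandwiched between $F_*$ and $F^*$, but as written your argument never verifies the required inequalities for interior sequences converging to a boundary point, which is the only nontrivial part of the lemma. Relatedly, the role of $h_B\ll h$ is not to force nearby grid points onto the boundary; it is to keep the wide stencils at interior points near $\partial\Omega$ consistent (they must be able to find neighbours, possibly on the boundary, within the search radius $\delta\gg h$), which is exactly what is needed to justify the interior-sequence limit above and is what \cite[Theorem~16]{FroeseMeshfreeEigs} supplies.
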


\begin{proof}
From~\cite[Theorem~16]{FroeseMeshfreeEigs}, the scheme is consistent and monotone in $\Omega$.  It is also trivially monotone on $\partial\Omega$.  By construction, the scheme is Lipschitz as it involves only addition, multiplication, and computing the maximum of operators.  It remains to verify consistency at the boundary.

Consider $x\in\partial\Omega$, smooth $\phi$, and sequences $h_n\to0$, $y_n\in\G^h\cap\partial\Omega$, $z_n\in\G^h\cap\Omega$, $w_n\in\bar{\Omega}$ such that $h_n\to0$ and $y_n, z_n, w_n \to x$.  For sequences that approach $x$ along the boundary we have
\begin{align*}
\lim_{n\to\infty,\xi\to0} &F^{h_n}(y_n,\phi(w_n)+\xi,\phi(w_n)-\phi(\cdot)) = \lim_{n\to\infty,\xi\to0}(\phi(w_n)+\xi-g(y_n))\\
  &= \phi(x)-g(x).
\end{align*}
For sequences that approach $x$ from the interior, we can use the consistency and Lipschitz continuity of the interior approximation to calculate
\begin{align*}
\lim_{n\to\infty,\xi\to0}& F^{h_n}(z_n,\phi(w_n)+\xi,\phi(w_n)-\phi(\cdot)) \\
  &=\max\{-{\det}^+(D^2\phi(x)) + \kappa(x) (1 + \abs{\nabla \phi(x)}^2)^{(n+2)/2}, -\lambda_1(D^2\phi(x))\}\\
	&=F(x,\phi(x),\nabla\phi(x),D^2\phi(x)).
\end{align*}

Combining these results yields
\begin{align*}
\limsup_{h\to0,w\to x, z\in\G^h\to x,\xi\to0} & F^h(z,\phi(w)+\xi,\phi(w)-\phi(\cdot)) \\ 
&=\max\left\{\phi(x)-g(x),F(x,\phi(x),\nabla\phi(x),D^2\phi(x))\right\}\\
&= F^*(x,\phi(x),\nabla\phi(x),D^2\phi(x)).
\end{align*}
We can similarly verify the condition on the limit inferior of the scheme, which establishes consistency in the sense of Definition~\ref{def:consistency}.
\end{proof}

\subsection{Convergence}\label{sec:convergenceGauss}

We now establish that the generalised finite difference method~\eqref{eq:approxInt}-\eqref{eq:approxBC} correctly approximates generalised solutions of the prescribed Gaussian curvature equation.  We begin by demonstrating that the scheme is well-posed, which requires us to construct strict sub- and super-solutions of the PDE.

\begin{lem}[Strict classical super-solution]\label{lem:superExists}
Under the conditions of Hypothesis~\ref{hyp}, the equation of prescribed Gaussian curvature has a strict classical super-solution.
\end{lem}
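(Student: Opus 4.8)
The plan is to exhibit an explicit strict classical super-solution in the form of a strictly concave paraboloid, which makes the $\det^+$ term vanish identically and reduces everything to elementary bookkeeping. Recall from Definition~\ref{def:strict} that we must produce $w\in C^2(\bar\Omega)$ and $\mu>0$ with
\[ F_*(x,w(x),\nabla w(x),D^2w(x)) \geq \mu, \qquad x\in\bar\Omega, \]
where $F_*$ is the operator~\eqref{eq:MAlower}. Since the convexified operator admits non-convex super-solutions (see the remark following Definition~\ref{def:subsuperWeak}), we are free to take $w$ concave. Concretely, I would set $w(x) = B - A\abs{x}^2$ with constants $A,B>0$ to be fixed below, so that $\nabla w(x) = -2Ax$ and $D^2w(x) = -2AI$. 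Every eigenvalue of $D^2w$ equals $-2A<0$, hence ${\det}^+(D^2w(x)) = \prod_{j=1}^n\max\{-2A,0\} = 0$ and $-\lambda_1(D^2w(x)) = 2A$.

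At any interior point $x\in\Omega$, using only $\kappa\geq 0$ (H3), we then get
\[ \max\left\{-{\det}^+(D^2w(x)) + \kappa(x)R(\nabla w(x)),\, -\lambda_1(D^2w(x))\right\} = \max\left\{\kappa(x)R(-2Ax),\, 2A\right\} \geq 2A. \]
Note that although $R(p)=(1+\abs{p}^2)^{(n+2)/2}$ is unbounded in $p$, here $\nabla w$ is a fixed linear function on the bounded set $\bar\Omega$, so $\kappa(x)R(\nabla w(x))$ is bounded and contributes nothing to the difficulty. Thus the interior inequality holds with margin $2A$ for any choice of $A$.

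It remains to handle the boundary term. For $x\in\partial\Omega$, \eqref{eq:MAlower} gives $F_* = \min\{w(x)-g(x),\,\max\{\cdots\}\}$, and since the second entry is $\geq 2A$ by the computation above, we only need $w(x)-g(x)\geq 2A$ on $\partial\Omega$. Let $\rho = \sup_{x\in\bar\Omega}\abs{x}$, which is finite by (H1), and note $g$ is bounded on the compact set $\partial\Omega$ by (H2). Then $w(x)-g(x) \geq B - A\rho^2 - \max_{\partial\Omega}\abs{g}$, so choosing $B \geq 2A + A\rho^2 + \max_{\partial\Omega}\abs{g}$ makes this $\geq 2A$. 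With this $B$ we conclude $F_*(x,w,\nabla w,D^2w)\geq 2A$ on all of $\bar\Omega$, so $w$ is a strict classical super-solution with $\mu = 2A$.

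There is essentially no obstacle in this direction; the only thing to keep track of is that the interior margin (governed by $A$, through the eigenvalue term $-\lambda_1 = 2A$) and the requirement of lying sufficiently far above $g$ on $\partial\Omega$ (governed by the vertical shift $B$) are controlled by independent parameters, so both can be met simultaneously, and the potential unboundedness of the curvature right-hand side is irrelevant once $w$ is a fixed smooth function on a bounded domain. The genuine analytic work — producing a sub-solution satisfying a near-degenerate Monge--Amp\`ere inequality whose gradient does not blow up, which is why the companion sub-solution lemma must invoke standard elliptic theory — lies entirely on the sub-solution side and does not arise here.
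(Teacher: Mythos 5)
Your proof is correct and takes essentially the same route as the paper: both posit a strictly concave paraboloid $w(x) = B - A\abs{x}^2$ (the paper fixes $A = 1/2$), exploit that every eigenvalue of $D^2w$ is negative so ${\det}^+ = 0$ and $-\lambda_1 = 2A > 0$ gives the interior margin, and then choose the vertical shift large enough so that $w - g$ is uniformly positive on $\partial\Omega$. If anything your write-up is slightly more careful than the paper's in keeping track of which boundary envelope is needed: a strict super-solution must satisfy $F_* \geq \mu$, and since $F_*$ is a $\min$ at $\partial\Omega$ one must verify \emph{both} terms exceed $\mu$ as you do, whereas the paper's displayed line writes $F^*$ and only records the $w-g$ bound.
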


\begin{proof}
We propose the function
\[ w(x) = -\frac{\abs{x}^2}{2} + M_1, \quad M_1 > \max\limits_{x\in\partial\Omega}\left\{\abs{g(x)} + \frac{\abs{x}^2}{2}\right\}>0. \]
At interior points, we substitute $w$ into the PDE~\eqref{eq:MAconvex} to obtain
\[ F(x,w(x),\nabla w(x),D^2w(x)) \geq -\lambda_1(D^2w(x)) = 1 > 0. \]
At boundary points we have
\[ F^*(x,w(x),\nabla w(x),D^2w(x)) \geq w(x)-g(x) > \max\limits_{y\in\partial\Omega}\left\{\abs{g(y)}\right\} - g(x) > 0,\]
which establishes $w$ as a strict super-solution.
\end{proof}

\begin{lem}[Strict classical sub-solution]\label{lem:subExists}
Under the conditions of Hypothesis~\ref{hyp}, the equation of prescribed Gaussian curvature has a strict classical sub-solution.
\end{lem}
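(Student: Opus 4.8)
The plan is to construct an explicit strict classical sub-solution by taking a suitably scaled and translated piece of a sphere (or, more robustly, a solution to a related Gaussian curvature problem on a slightly larger domain) and then checking the required strict inequality on $\bar{\Omega}$. The key structural fact to exploit is Hypothesis~(H4): since $\int_\Omega \kappa\,dx$ is \emph{strictly} less than the total mass $\int_{\R^n}(1+\abs{p}^2)^{-(n+2)/2}\,dp$, there is room to find a convex function whose Monge-Amp\`ere measure strictly dominates $\kappa(x)R(\nabla u(x))\,dx$ while still having finite total subgradient image. Concretely, I would pick a bounded convex domain $\Omega' \supset\supset \bar{\Omega}$ and a constant $\tilde\kappa > \sup_{\bar\Omega}\kappa$ small enough that $\int_{\Omega'}\tilde\kappa\,dx$ still satisfies the strict compatibility condition on $\Omega'$; by Lemma~\ref{lem:exist} there is a generalised (hence, by Theorem~\ref{thm:equivalence1}, viscosity) solution $v$ on $\Omega'$ of the Gaussian curvature equation with curvature $\tilde\kappa$ and, say, zero boundary data.

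First I would record that this $v$ is convex and Lipschitz on the compact set $\bar{\Omega}$ (being convex and finite on the strictly larger domain $\Omega'$, its gradient is bounded on $\bar\Omega$, so $R(\nabla v)$ is bounded there). Next I would address smoothness: the viscosity/generalised solution $v$ need not be $C^2$, so to land a genuine $C^2$ strict classical sub-solution I would either (i) invoke interior regularity for Monge-Amp\`ere with continuous, strictly positive right-hand side to get $v\in C^{2}$ in the interior of $\Omega'$, which contains $\bar\Omega$, or (ii) mollify $v$ and use convexity to control the error, absorbing it into the strict gap. Then the verification is the routine part: at interior points $x\in\bar\Omega$ one computes
\[
F^*(x,v(x),\nabla v(x),D^2 v(x)) = \max\{-{\det}^+(D^2 v(x)) + \kappa(x)R(\nabla v(x)),\,-\lambda_1(D^2 v(x))\},
\]
and since $v$ is convex $-\lambda_1(D^2 v(x))\le 0$, while $-{\det}^+(D^2 v) + \kappa R(\nabla v) = -\tilde\kappa R(\nabla v) + \kappa R(\nabla v) = (\kappa-\tilde\kappa)R(\nabla v) \le -(\tilde\kappa - \sup\kappa)\,R(0) < 0$, giving a uniform strict negative bound $-\mu_1$. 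At the boundary $x\in\partial\Omega$ one has $F^*(x,v(x),\nabla v(x),D^2v(x)) = \max\{v(x)-g(x),\,(\text{the interior expression})\}$, so after subtracting a large enough constant from $v$ one makes $v(x)-g(x)\le -\mu_2 < 0$ on $\partial\Omega$ uniformly (subtracting a constant leaves all Hessian/gradient computations untouched). Taking $\mu = \min\{\mu_1,\mu_2\}>0$ finishes the argument, mirroring the structure of the super-solution proof in Lemma~\ref{lem:superExists}.

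The main obstacle is the regularity step: producing an honest $C^2$ function rather than a mere viscosity sub-solution. The cleanest route is to note that $\tilde\kappa$ is a positive constant, so on $\Omega'$ the equation $\det(D^2 v) = \tilde\kappa\,R(\nabla v)$ has a smooth, strictly positive right-hand side once $\nabla v$ is known to be locally bounded; standard Monge-Amp\`ere regularity theory (Caffarelli) then gives $v \in C^{2}_{\mathrm{loc}}(\Omega')\supset C^2(\bar\Omega)$, since $\bar\Omega$ is a compact subset of the open set $\Omega'$. If one prefers to avoid citing regularity, the mollification alternative works because $v$ is convex: $v_\varepsilon := v * \rho_\varepsilon$ is $C^\infty$, convex, satisfies $\nabla v_\varepsilon \to \nabla v$ and $D^2 v_\varepsilon \to D^2 v$ in the appropriate measure-theoretic sense, and the Monge-Amp\`ere measure of $v_\varepsilon$ converges weakly to that of $v$; since our target inequality is strict with a fixed gap $\tilde\kappa - \sup_{\bar\Omega}\kappa > 0$, for $\varepsilon$ small the mollified function still satisfies $-{\det}^+(D^2 v_\varepsilon) + \kappa R(\nabla v_\varepsilon) < 0$ uniformly on $\bar\Omega$. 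Either way, the strict compatibility condition (H4) is exactly what guarantees the slack needed to carry out this perturbation, which is why the hypothesis is stated with a strict inequality.
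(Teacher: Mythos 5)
Your overall architecture — solve a Gaussian-curvature problem with \emph{larger} curvature on a \emph{larger} domain, check the strict inequality pointwise, and subtract a constant to handle the boundary term — is the right idea and matches the paper's strategy. However, your concrete construction contains a genuine gap: you want a \emph{constant} $\tilde\kappa > \sup_{\bar\Omega}\kappa$ whose total mass over $\Omega'$ is still admissible, but Hypothesis~(H4) only bounds $\int_\Omega\kappa\,dx$, not $\sup_{\bar\Omega}\kappa$. If $\kappa$ is concentrated (say $\kappa$ is near zero except on a tiny ball where it is very large), then $\sup_{\bar\Omega}\kappa\cdot|\Omega'|$ can vastly exceed $\int_{\R^n}(1+\abs{p}^2)^{-(n+2)/2}\,dp$, and no admissible constant $\tilde\kappa > \sup\kappa$ exists. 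The same objection kills the sphere-piece alternative: a sphere of radius $R$ covering $\bar\Omega$ has constant curvature $R^{-n}$, which is small, not large.

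The paper avoids this by increasing the curvature \emph{by a constant $\gamma$}, not \emph{to a constant above $\sup\kappa$}: it sets $\kappa_\epsilon(x) = (\kappa(\mathrm{Proj}_{\bar\Omega}x)+\gamma)\min\{\epsilon^{-1}\dist(x,\partial\Omega_\epsilon),1\}$, so that $\kappa_\epsilon = \kappa+\gamma$ on $\Omega$, $\kappa_\epsilon$ tapers to zero on the enlargement, and for small $\epsilon$ the total mass is controlled by $\int_\Omega(\kappa+\gamma)\,dx$, which (H4) makes admissible for small $\gamma$. It then mollifies $\kappa_\epsilon$ to a $C^2$ function and invokes the classical existence theorem \cite[Cor.~17.25]{GilTrudBook} to obtain a genuinely smooth solution on $\Omega_\epsilon$ — sidestepping the Caffarelli regularity/mollification step you sketch (which, as you note yourself, needs strict convexity and a bootstrap, and is more delicate than what the classical theory already provides once the data are smoothed). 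The remainder of your verification (convexity implies $-\lambda_1 \leq 0$, pointwise inequality $\tilde\kappa > \kappa$ gives the interior gap, subtract a constant for the boundary) is correct and matches the paper, provided the construction issue above is repaired.
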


\begin{proof}
Our approach is to increase the curvature and smooth the data in order to produce a smooth, strict sub-solution of the PDE.  

Since the given curvature $\kappa$ satisfies the strict compatibility condition of Hypothesis~\ref{hyp}, we can choose $\gamma>0$ such that
\[ \int_\Omega(\kappa(x)+\gamma)\,dx < \int_{\R^n}(1+\abs{p}^2)^{-(n+2)/2}\,dp. \]

For any $\epsilon>0$, we can define an enlarged, uniformly continuous domain that includes an $\epsilon$-neighbourhood of the original uniformly continuous domain:
\[ \Omega_\epsilon = \{x\in\R^n\mid\text{dist}(x,\bar{\Omega})<\epsilon\}. \]
We can also extend the curvature $\kappa$ into this domain via
\[ \kappa_\epsilon(x) = \left(\kappa\left(\text{Proj}_{\bar{\Omega}}x\right)+\gamma\right)\min\left\{\frac{1}{\epsilon}\text{dist}(x,\partial\Omega_\epsilon),1\right\}. \]
Notice that $\kappa_\epsilon>0$ in $\Omega_\epsilon$, $\kappa_\epsilon=0$ on $\partial{\Omega}_\epsilon$, and $\kappa_\epsilon = \kappa+\gamma$ in $\Omega$.  We can further mollify this to produce a $C^2$ curvature function~$\tilde{\kappa}_\epsilon$ satisfying
\[ \abs{\tilde{\kappa}_\epsilon-\kappa_\epsilon} < \epsilon. \]

For sufficiently small $\epsilon>0$, we have both of the following conditions:
\[ \tilde{\kappa}_\epsilon(x) >  \kappa(x), \quad x \in \Omega \]
\[ \int_{\Omega_\epsilon}\tilde{\kappa}_\epsilon(x)\,dx < \int_{\R^n}(1+\abs{p}^2)^{-(n+2)/2}\,dp. \]

We also choose a constant 
\[ g_0 < \min\limits_{x\in\partial\Omega} g(x). \]

This smoothed data ensures that the following PDE has a smooth convex solution~$v$~\cite[Corollary~17.25]{GilTrudBook}.
\[
\begin{cases}
\det(D^2v(x)) = \tilde{\kappa}_\epsilon(x)(1+\abs{\nabla v(x)}^2)^{(n+2)/2}, & x\in\Omega_\epsilon\\
v(x) = g_0, & x \in \partial\Omega_\epsilon.
\end{cases}
\]

We note that since $v$ is convex, it attains its maximum on the boundary and thus $v \leq g_0$ in $\Omega_\epsilon$.

Now we verify that $v$ is a sub-solution of~\eqref{eq:MAconvex}.  Consider first interior points $x\in\Omega$.  Since $v$ is convex and its curvature satisfies $\tilde{\kappa}_\epsilon(x) > \kappa(x) \geq 0$, we have
\[ -\lambda_1(D^2v(x)) < 0. \]
The other term in the maximum is
\[ -{\det}^+(D^2v(x))+\kappa(x)R(\nabla v(x)) < -{\det}(D^2v(x)) + \tilde{\kappa}_\epsilon(x)R(\nabla v(x)) = 0. \]
We conclude that
\[ \max\left\{-{\det}^+(D^2v(x))+\kappa(x)R(\nabla v(x)),-\lambda_1(D^2v(x))\right\} < 0. \qedhere\]
\end{proof}

Now we combine these two lemmas with our earlier equivalence results (Theorems~\ref{thm:exist} and~\ref{thm:uniqueness}), the interior comparison principle (Theorem~\ref{thm:comparison}), well-posedness (Lemma~\ref{lem:existDiscrete}) and convergence (Theorem~\ref{thm:convergence}) criteria for schemes, and the consistency, monotonicity, and Lipschitz continuity of the scheme (Lemma~\ref{lem:consistency}).  The main result of this article is the following.

\begin{thm}[Convergence]\label{thm:filtered}
Consider the Gaussian curvature equation~\eqref{eq:MA} with data satisfying Hypothesis~\ref{hyp} and let $u(x)$ be the unique generalised solution of the equation.  For sufficiently small $h>0$, the approximation scheme~\eqref{eq:approxInt}-\eqref{eq:approxBC} has a solution~$U^h$, which defines a nearest neighbour extension $u^h(x)$ via~\eqref{eq:extension}.  Consider any $x\in\Omega$.  Then as $h\to0$, $u^h(x)\to u(x)$. 
\end{thm}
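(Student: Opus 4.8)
The plan is to verify, one by one, the hypotheses of the abstract convergence result (Theorem~\ref{thm:convergence}) for the specific scheme~\eqref{eq:approxInt}--\eqref{eq:approxBC} applied to the convexified weak-Dirichlet formulation, and then to identify the limiting viscosity solution with the generalised solution $u$ appearing in the statement. First I would record that the underlying PDE here is the convexified Gaussian curvature operator~\eqref{eq:MAconvex} equipped with the relaxed boundary operators $F_*,F^*$ from~\eqref{eq:MAlower}--\eqref{eq:MAupper}, and check that this PDE meets the two structural requirements of Theorem~\ref{thm:convergence}: the interior comparison principle is precisely Theorem~\ref{thm:comparison} (valid under Hypothesis~\ref{hyp}), and the existence of strict classical sub- and super-solutions is supplied by Lemmas~\ref{lem:subExists} and~\ref{lem:superExists}.

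Next I would invoke Lemma~\ref{lem:consistency} to certify that the scheme~\eqref{eq:approxInt}--\eqref{eq:approxBC} is consistent, monotone, and Lipschitz. Lemma~\ref{lem:existDiscrete} then guarantees that for all sufficiently small $h>0$ the scheme~\eqref{eq:approx} has a solution $U^h$, and Lemma~\ref{lem:stability} provides the uniform $L^\infty$ bound that the convergence proof uses internally. With all hypotheses in place, Theorem~\ref{thm:convergence} yields, for every interior point $x\in\Omega$, convergence of the nearest-neighbour extension $u^h(x)$ defined by~\eqref{eq:extension} to the viscosity solution of the PDE as $h\to0$.

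It then remains to show that this viscosity solution coincides on $\Omega$ with the generalised solution $u$. Here I would use the equivalence and uniqueness theory already established: by Theorem~\ref{thm:exist} the generalised solution $u$ is itself a viscosity solution of~\eqref{eq:MA}, while Theorem~\ref{thm:uniqueness} shows the viscosity solution is unique in $\Omega$ (it agrees there with the maximal sub-solution), and Theorems~\ref{thm:equivalence1} and~\ref{thm:equivalence} let one pass freely between the generalised and viscosity formulations, both for the original and the convexified operators. Consequently the limit produced by Theorem~\ref{thm:convergence} is exactly $u$ on $\Omega$, so $u^h(x)\to u(x)$ for every $x\in\Omega$; the companion $L^p$ statement would follow from Corollary~\ref{cor:Lp}.

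Since essentially all of the analysis has been carried out in the preceding sections, the proof is an assembly argument, and the only real care needed is bookkeeping: one must confirm that the ``underlying PDE'' invoked in Theorem~\ref{thm:convergence} is literally the convexified, weak-Dirichlet operator, so that Theorem~\ref{thm:comparison} and Lemmas~\ref{lem:subExists}--\ref{lem:superExists} are being applied to the correct operator, and that the identification of the limit chains together the right equivalences (Theorems~\ref{thm:equivalence1}, \ref{thm:equivalence}) and uniqueness result (Theorem~\ref{thm:uniqueness}) to connect the generalised and viscosity notions in the interior. The one genuine conceptual limitation, already anticipated in section~\ref{sec:comparison}, is that global comparison fails, so the conclusion is necessarily restricted to interior points $x\in\Omega$ and no control is claimed in a boundary layer where the solution may be discontinuous.
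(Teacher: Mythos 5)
Your proposal is correct and mirrors the paper's own argument: the paper proves this theorem precisely by assembling Lemmas~\ref{lem:superExists}, \ref{lem:subExists}, \ref{lem:consistency}, and~\ref{lem:existDiscrete} together with Theorems~\ref{thm:exist}, \ref{thm:uniqueness}, \ref{thm:comparison}, and~\ref{thm:convergence}, exactly as you do. Your bookkeeping remark about making sure the abstract convergence framework is applied to the convexified, weak-Dirichlet operator is the right point of care, and the restriction to interior points for the pointwise conclusion matches the paper's intent.
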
 


\subsection{Examples}\label{sec:compute}
We conclude by presenting several computational examples that demonstrate that this monotone scheme does indeed correctly compute surfaces of prescribed Gaussian curvature.  The examples we present are two-dimensional, non-classical viscosity solutions, but the convergence results presented in this article are valid in any dimension.

Each of the following examples is posed on a domain $\Omega\subset\R^2$ that consists of the half of the unit disc where $x>0$.  The equations are discretised on a uniform Cartesian grid with spacing $h$ inside the domain, augmented by approximately $4/h^{3/2}$ points on the boundary  $\partial\Omega$.  The resulting point cloud is pictured in Figure~\ref{fig:mesh}.  The discrete equations were solved in Matlab using an explicit iterative scheme as in~\cite{ObermanEigenvalues}.

Our purpose here is to validate the convergence of monotone schemes rather than to produce an optimal method.  However, we expect that the techniques of~\cite{FroeseMeshfreeEigs,FOFiltered} can be adapted to produce significantly more accurate almost-monotone schemes on meshes that are adapted to resolve boundary layers.


\subsubsection{Lipschitz ($C^{0,1}$)}\label{sec:lipschitz}

The first example we consider is given by
\[ u(x,y) = \abs{-x\sin(\pi/10)+y\cos(\pi/10)}, \quad \kappa(x,y) = 0. \]
This example, which is Lipschitz continuous but not differentiable, is pictured in Figure~\ref{fig:deg}.  We note that the line of non-differentiability does not align with any grid direction.  Because the curvature vanishes, the ellipticity of the \MA equation~\eqref{eq:MA} is everywhere degenerate.  Nevertheless, the monotone scheme converges to the correct solution as is seen in Table~\ref{table:error}.  

We remark that for such a singular example, the error is not expected to decrease monotonically to zero as the grid is refined.  This is because the directions that we resolve are distributed uniformly on the unit ball.  While the number of directions is increased as the grid is refined, it is possible that a direction that is present on a coarse grid (but not on the next refinement) may happen to align well with the singularity, which can lead to an unusually low error.  However, we do expect the error to converge to zero as the grid is refined.  This is observed in Table~\ref{table:error}.

\subsubsection{Unbounded gradient ($C^0$)}\label{sec:blowup}

For our second example, we consider the constant-curvature surface of the unit ball
\[ u(x,y) = -\sqrt{1-x^2-y^2}, \quad \kappa(x,y) = 1. \]
This solution is continuous, but the gradient blows up along a portion of the boundary; see Figure~\ref{fig:ballCts}.  Despite the low regularity of this solution, the monotone scheme correctly computes this weak solution.  See Table~\ref{table:error}.

\subsubsection{Dirichlet data not attained}\label{sec:discts}

Finally, we consider a modification of the previous example that does not satisfy the Dirichlet boundary conditions in a classical sense.  We again look for a surface of constant unit curvature with the data
\[ g(x,y) = -\sqrt{1-x^2-y^2} + \frac{1}{4}x, \quad \kappa(x,y) = 1. \]
The exact solution is again the surface of the unit ball,
\[ u(x,y) = -\sqrt{1-x^2-y^2}, \]
which does not agree with $g(x,y)$ on much of the boundary.  
See Figure~\ref{fig:ballDiscts} for the computed solution, which lies strictly above the true solution (Figure~\ref{fig:ballCts}) at the boundary because of the strong implementation of the Dirichlet boundary conditions.

In this case, it is not possible to obtain convergence in $L^\infty$ since the computed solution must contain an error of 0.25 at the boundary.  The resulting boundary layer is evident in the plot of error in Figure~\ref{fig:disctsErr}. However, as predicted by Corollary~\ref{cor:Lp}, we do observe convergence in $L^1$ despite the highly non-classical nature of this example.  See Table~\ref{table:error}.

\begin{figure}
\centering
	{\includegraphics[width=0.4\textwidth]{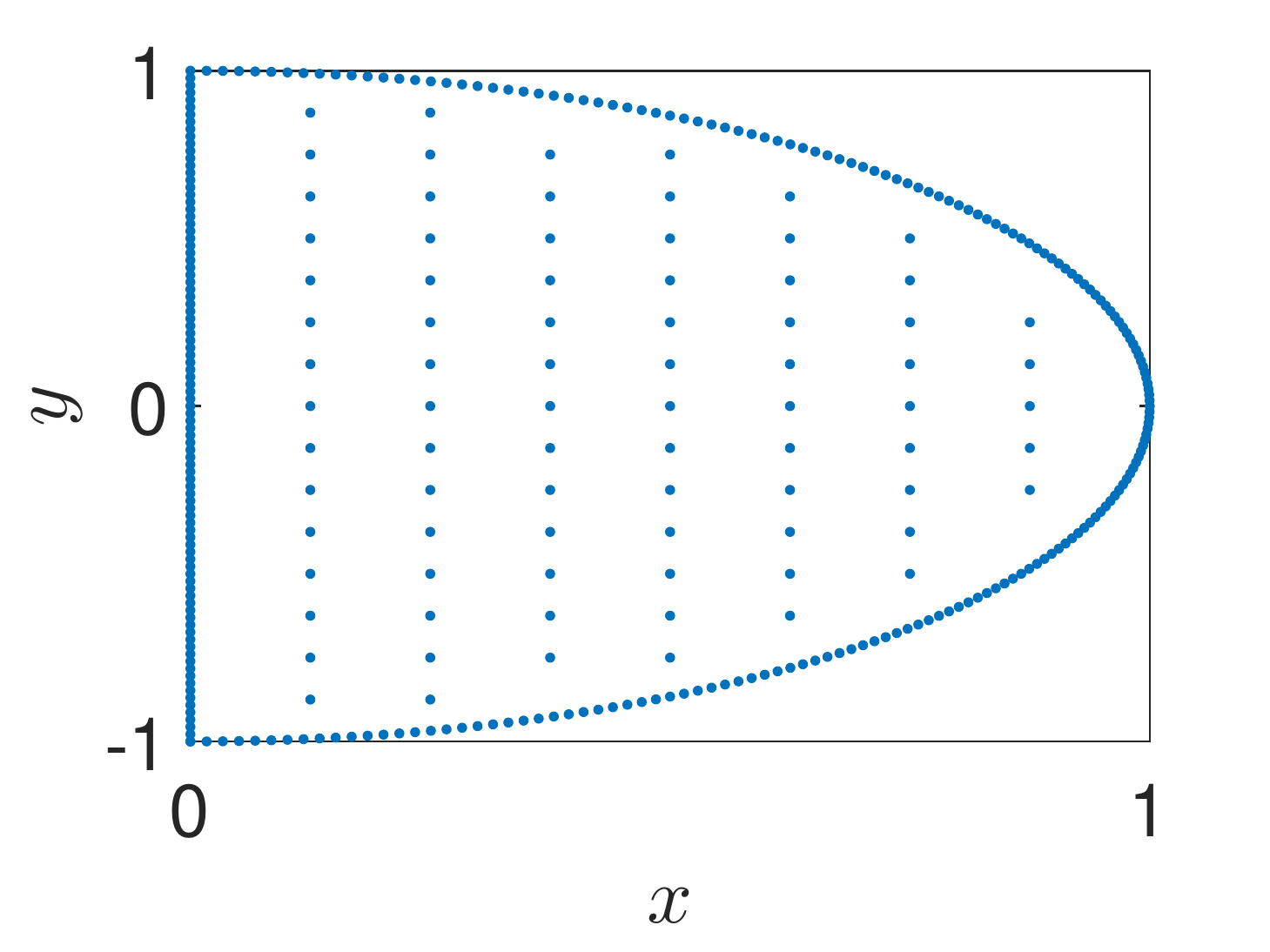}}
\caption{Computational point cloud with $h=2^{-3}$.}
\label{fig:mesh}
\end{figure}

\begin{figure}[tbhp]
  \centering
  \subfigure[]{\label{fig:deg}\includegraphics[width=0.45\textwidth]{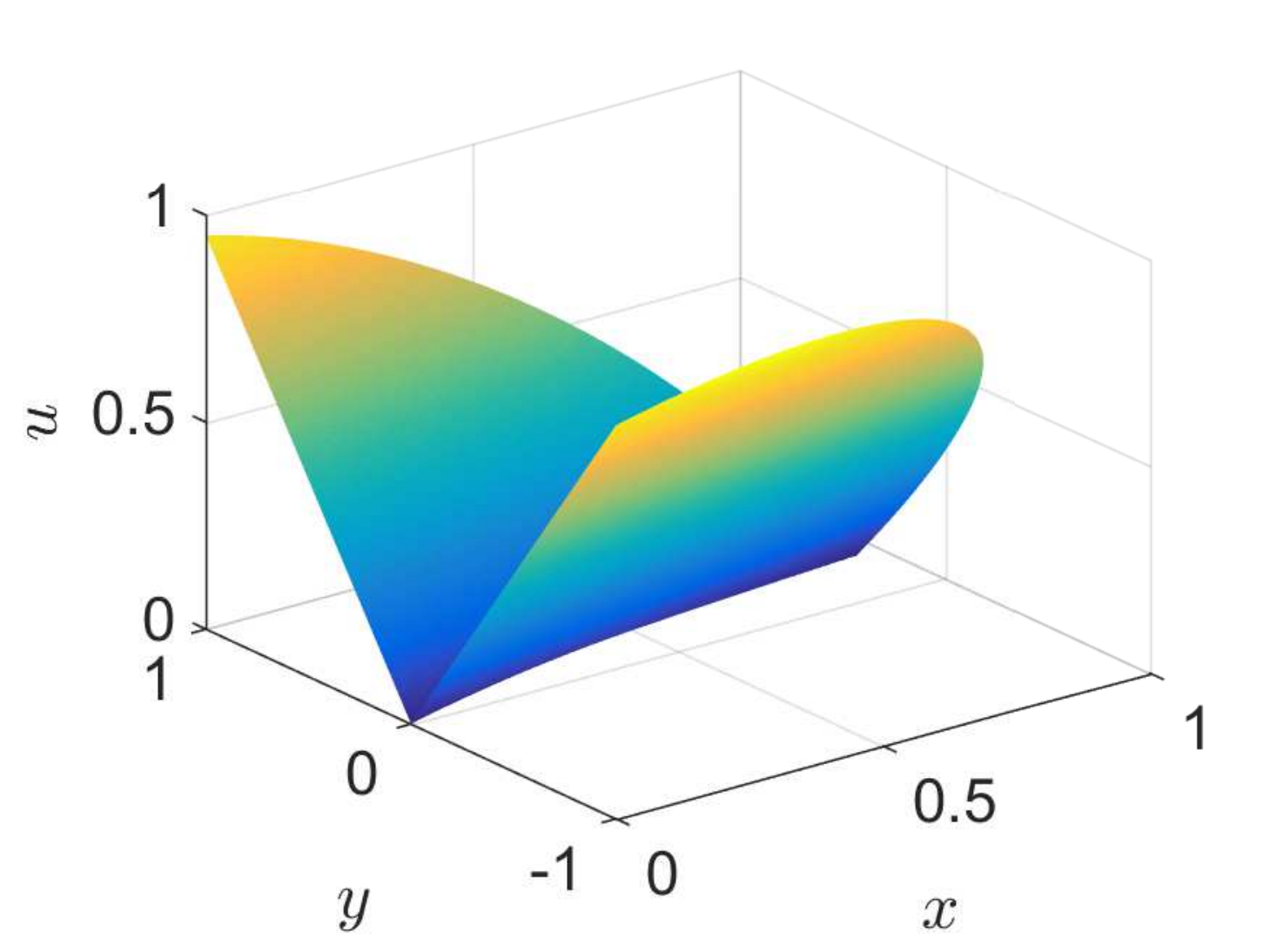}}
  \subfigure[]{\label{fig:ballCts}\includegraphics[width=0.45\textwidth]{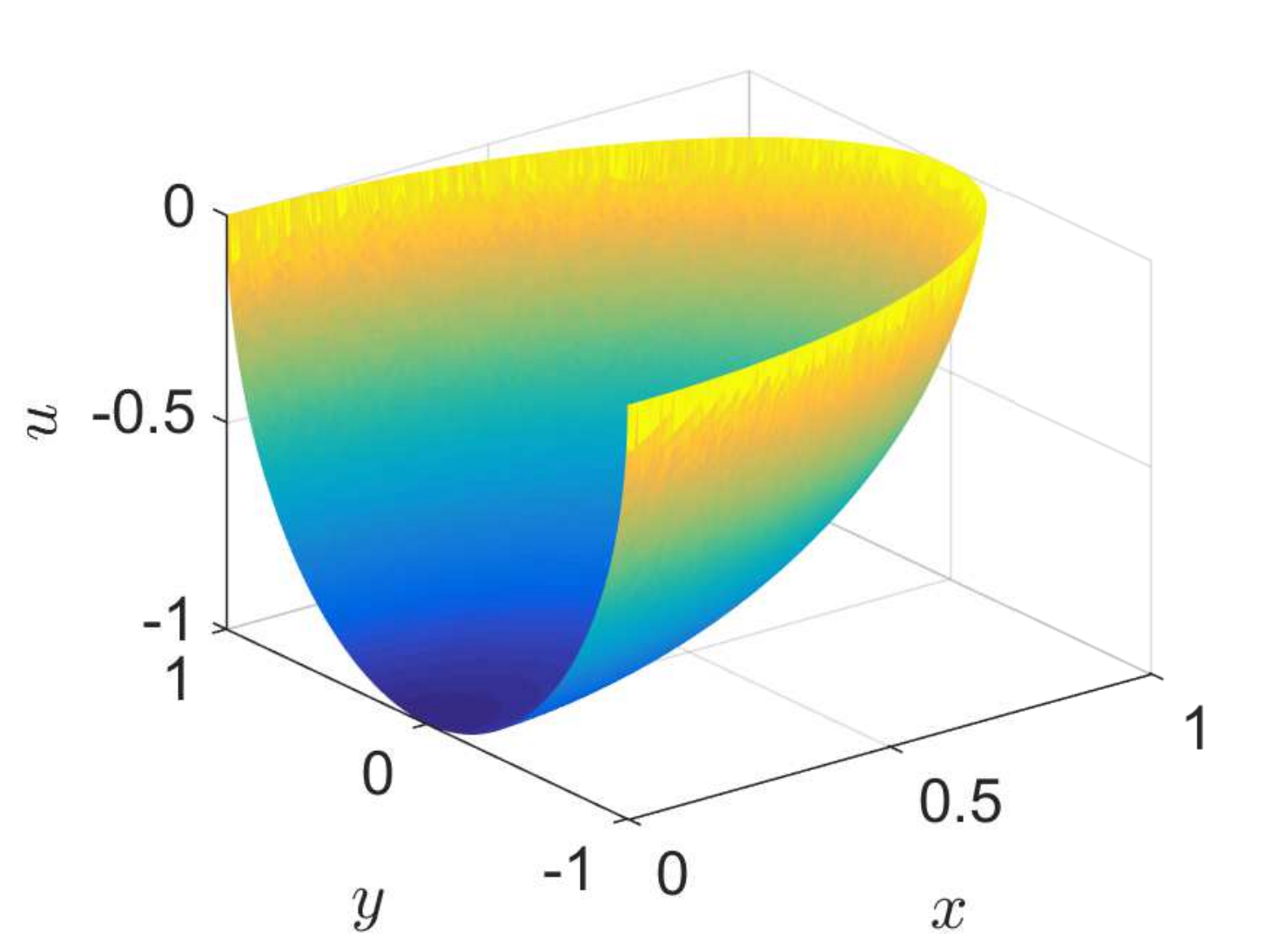}}
	\subfigure[]{\label{fig:ballDiscts}\includegraphics[width=0.45\textwidth]{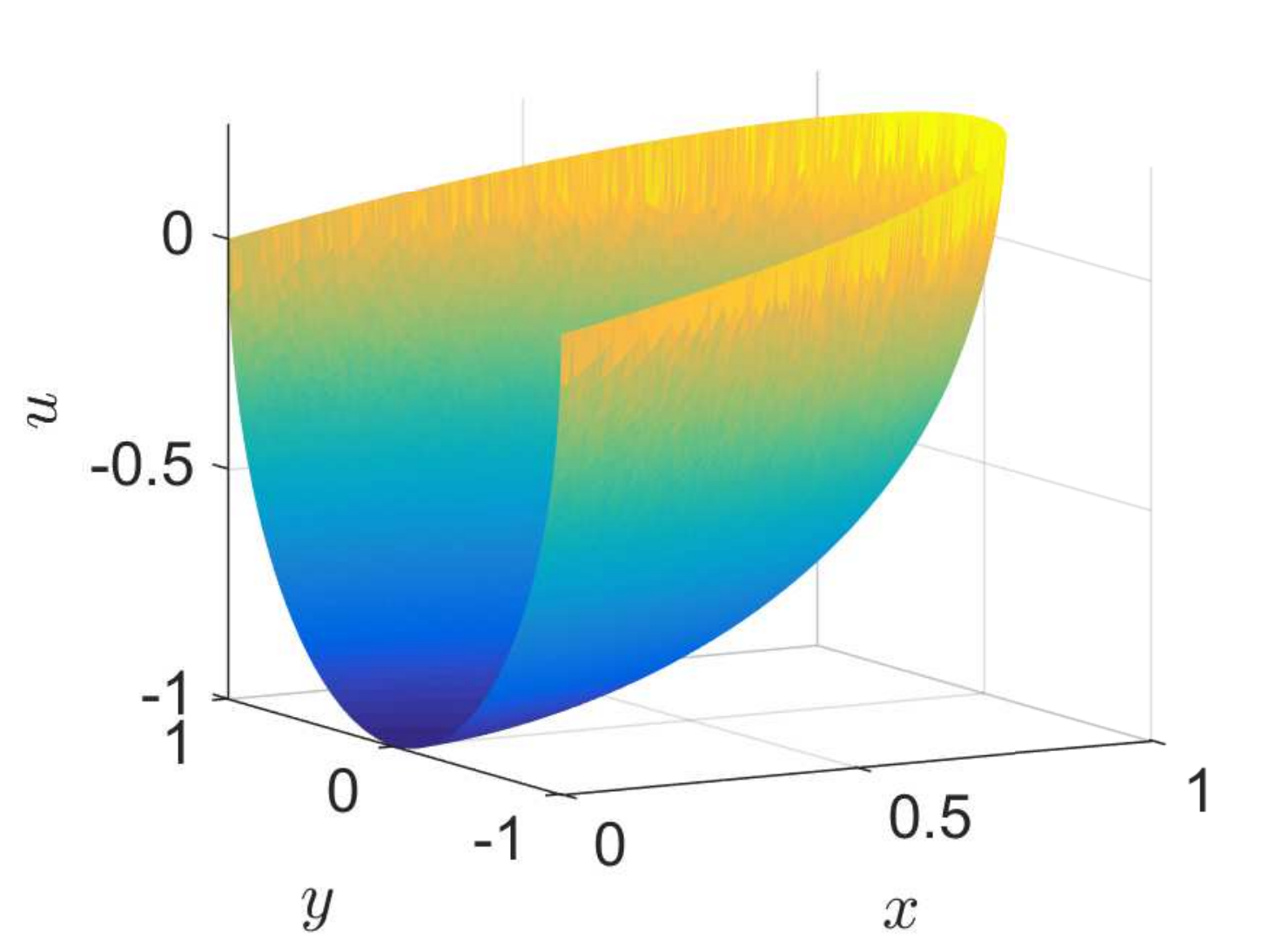}}
	\subfigure[]{\label{fig:disctsErr}\includegraphics[width=0.43\textwidth]{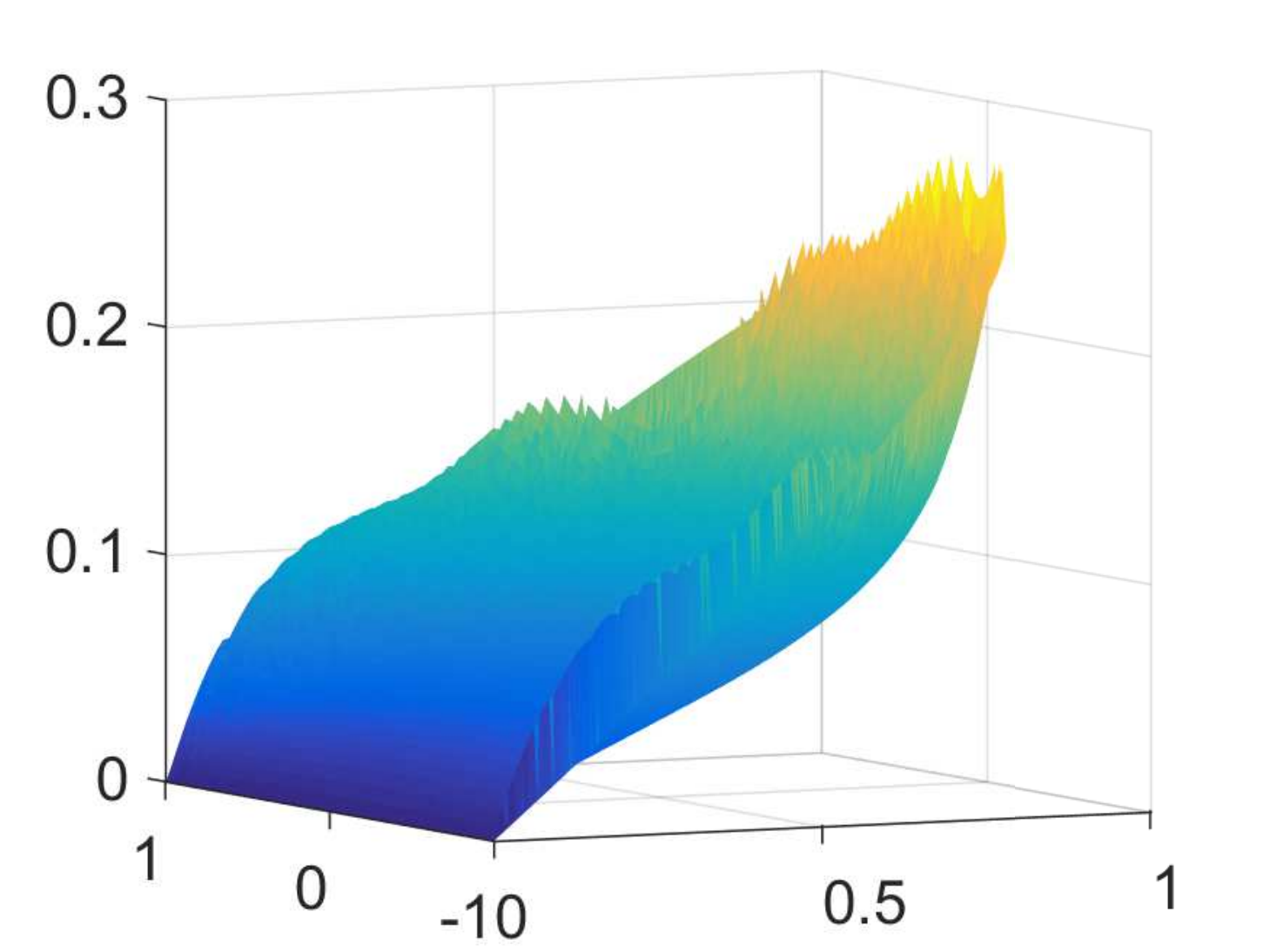}}
  \caption{Computed approximations ($h=2^{-7}$) to solutions that \subref{fig:deg}~are Lipschitz continuous (\ref{sec:lipschitz}), \subref{fig:ballCts}~have an unbounded gradient (\ref{sec:blowup}), and \subref{fig:ballDiscts}~do not achieve the Dirichlet data (\ref{sec:discts}). \subref{fig:disctsErr}~Error in discontinuous solution.}
  \label{fig:solutions}
\end{figure}

\begin{table}
\caption{Error in computed solutions.}
\label{table:error}
\centering
\begin{tabular}{c|c|c|cc}
& $C^{0,1}$ & $C^0$ & \multicolumn{2}{c}{Non-continuous}\\
$h$ & $\|u-u^h\|_\infty$ & $\|u-u^h\|_\infty$ & $\|u-u^h\|_\infty$ & $\|u-u^h\|_1$\\ \hline
$2^{-3}$ & $9.45\times10^{-2}$ & $1.94\times10^{-1}$ & $3.55\times10^{-1}$ & $2.12\times10^{-1}$  \\
$2^{-4}$ & $9.27\times10^{-2}$ & $1.61\times10^{-1}$ & $3.33\times10^{-1}$ & $1.83\times10^{-1}$ \\
$2^{-5}$ & $6.48\times10^{-2}$ & $1.28\times10^{-1}$ & $3.05\times10^{-1}$ & $1.60\times10^{-1}$ \\
$2^{-6}$ & $6.41\times10^{-2}$ & $1.09\times10^{-1}$ & $2.90\times10^{-1}$ & $1.33\times10^{-1}$ \\
$2^{-7}$ & $3.18\times10^{-2}$ & $8.80\times10^{-2}$ & $2.74\times10^{-1}$ & $9.53\times10^{-2}$ \\
\end{tabular}
\end{table}

\section{Conclusions}\label{sec:conclude}
In this article, we developed  a proof that surfaces of prescribed Gaussian curvature can be constructed through the use of monotone approximations of a Monge-Amp\`ere type equation.  

Typical convergence proofs for the approximation of weak (viscosity) solutions of nonlinear degenerate elliptic equations require on a comparison principle that ensures that sub-solutions lie below super-solutions.  However, this is demonstrably false for our equation equipped with Dirichlet boundary conditions, which must be interpreted in a weak sense and which allow for solutions that are discontinuous at the boundary.  

By relying on a geometric interpretation of the \MA equation, we proved that a comparison principle does hold in the interior of the domain.  This result relied on the fact that the sub-gradients of viscosity solutions can be ordered even when the boundary conditions are not satisfied in the classical sense.  Using this comparison result, we modified the traditional Barles-Souganidis framework to prove that consistent, monotone schemes are well-posed and converge to the non-continuous viscosity solution, though possibly with a boundary layer.

To validate these results, we implemented a monotone scheme for the prescribed Gaussian curvature equation in two-dimensions.  Convergence to the viscosity solution was observed for challenging examples including a Lipschitz continuous solution, a solution with unbounded gradient, and a solution that did not satisfy the boundary conditions in a classical sense.

The monotone scheme presented here is low-accuracy, but formally higher-order filtered schemes can be constructed as in~\cite{FOFiltered}.  The schemes can also be adapted to non-uniform meshes as in~\cite{FroeseMeshfreeEigs}.  A natural extension would be to introduce filtered schemes that are one-sided and highly resolved near the boundary in an attempt to reduce the effects of the boundary layer.  Because solutions have very low regularity, fast solvers such as Newton's method are not effective, and the development of more appropriate solution methods is another possible direction for future work.

\bibliographystyle{plain}
\bibliography{GaussCurvature}

{\appendix

\section{Equivalence of weak solutions}\label{app:equivalence}

In this appendix, we modify the results of~\cite{Gutierrez} in order to prove Theorem~\ref{thm:equivalence1}, which asserts that generalised and viscosity solutions of~\eqref{eq:curvature} are equivalent on open sets.

\begin{lem}[Generalised solutions are viscosity solutions]\label{lem:aleksToVisc}
Consider a domain $\Omega$ and curvature~$\kappa$ satisfying the conditions of Hypothesis~\ref{hyp}.  Let~$u$ be a convex generalised solution of~\eqref{eq:curvature}.  Then $u$ is a viscosity solution of~\eqref{eq:curvature}.
\end{lem}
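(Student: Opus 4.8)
The plan is to verify both viscosity inequalities of Definitions~\ref{def:subsolution}--\ref{def:supersolution} directly, combining the convexity of $u$ with the geometric definition of the generalised solution. Since $u$ is convex it is continuous in $\Omega$ (no semi-continuous envelopes are needed in the interior), and by Theorem~\ref{thm:equivalence} it suffices to test $u$ against the original operator $F(x,z,p,X)=-\det(X)+\kappa(x)R(p)$. Throughout, for $S\subseteq\R^n$ I abbreviate $\omega(S)=\int_S(1+\abs{p}^2)^{-(n+2)/2}\,dp$, so that Definition~\ref{def:generalised} reads $\omega(\partial u(E))=\int_E\kappa(x)\,dx$ for every measurable $E\subseteq\Omega$, while $\omega(\nabla\phi(A))=\int_A\det D^2\phi(x)/R(\nabla\phi(x))\,dx$ by the change of variables $p=\nabla\phi(x)$ whenever $\nabla\phi$ is a diffeomorphism on an open set containing $A$.

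\emph{Sub-solution.} Fix $\phi\in C^2$ and $x_0\in\Omega$ with $u-\phi$ having a local maximum at $x_0$; replacing $\phi$ by $\phi+\abs{x-x_0}^4$ I may assume the maximum is strict, and comparing a supporting hyperplane of $u$ at $x_0$ with $\phi$ gives $p_0:=\nabla\phi(x_0)\in\partial u(x_0)$ and $D^2\phi(x_0)\geq0$. Assume first $D^2\phi(x_0)>0$, so $\phi$ is strictly convex on a ball $B_r=B(x_0,r)$ with $\overline{B_r}\subset\Omega$ and $u<\phi$ on $\overline{B_r}\setminus\{x_0\}$; set $a=\min_{\partial B_r}(\phi-u)>0$ and, for $0<\delta<a$, let $A_\delta=\{x\in\overline{B_r}\mid\phi(x)-u(x)\leq\delta\}$, a closed neighbourhood of $x_0$ shrinking to $\{x_0\}$ as $\delta\to0$. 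The crux is the sliding-plane inclusion $\partial u(A_\delta)\subseteq\nabla\phi(A_\delta)$: for $y\in A_\delta$ and $p\in\partial u(y)$ the affine function $\ell(x)=u(y)+p\cdot(x-y)$ satisfies $\ell\leq u\leq\phi$ on $\overline{B_r}$, and raising it until it first meets $\phi$ from below yields a contact point $\bar z$; comparing the gap $\phi-\ell$ at $y$ (where it is $\leq\delta$) with its value on $\partial B_r$ (where it is $\geq a$) forces $\bar z$ into the interior, whence $\nabla\phi(\bar z)=p$ and $\phi(\bar z)-u(\bar z)\leq(\phi-\ell)(\bar z)\leq\delta$, so $\bar z\in A_\delta$. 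Applying $\omega$ and the identities above,
\[
\int_{A_\delta}\kappa(x)\,dx=\omega(\partial u(A_\delta))\leq\omega(\nabla\phi(A_\delta))=\int_{A_\delta}\frac{\det D^2\phi(x)}{R(\nabla\phi(x))}\,dx;
\]
dividing by $\abs{A_\delta}>0$ and letting $\delta\to0$, continuity of the integrands gives $\kappa(x_0)R(p_0)\leq\det D^2\phi(x_0)$, i.e. $F(x_0,u(x_0),p_0,D^2\phi(x_0))\leq0$. When $D^2\phi(x_0)$ is only positive semi-definite the claim is trivial if $\kappa(x_0)=0$, and otherwise follows by applying this to $\phi+\sigma\abs{x-x_0}^2$ and letting $\sigma\to0^+$.

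\emph{Super-solution.} Fix $\phi\in C^2$ with $u-\phi$ having a local minimum at $x_0$ and $D^2\phi(x_0)\geq0$; the inequality $\det D^2\phi(x_0)\leq\kappa(x_0)R(\nabla\phi(x_0))$ is trivial when $D^2\phi(x_0)$ is singular, so assume $D^2\phi(x_0)>0$. After making the minimum strict and choosing $r,a$ as before (now with $u>\phi$ on $\overline{B_r}\setminus\{x_0\}$), let $A_\delta=\{x\in\overline{B_r}\mid u(x)-\phi(x)\leq\delta\}$. Here $\phi$ is convex on $\overline{B_r}$, so for $y\in A_\delta$ its affine support $\ell(x)=\phi(y)+\nabla\phi(y)\cdot(x-y)$ satisfies $\ell\leq\phi\leq u$; raising $\ell$ until it first meets $u$ from below and running the same gap comparison produces an interior contact point $\bar z\in A_\delta$ with $\nabla\phi(y)\in\partial u(\bar z)$, so $\nabla\phi(A_\delta)\subseteq\partial u(A_\delta)$. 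Applying $\omega$ gives $\int_{A_\delta}\det D^2\phi(x)/R(\nabla\phi(x))\,dx\leq\int_{A_\delta}\kappa(x)\,dx$, and dividing by $\abs{A_\delta}$ and letting $\delta\to0$ yields $\det D^2\phi(x_0)\leq\kappa(x_0)R(\nabla\phi(x_0))$, i.e. $F(x_0,u(x_0),\nabla\phi(x_0),D^2\phi(x_0))\geq0$. Together with the sub-solution case, $u$ is a convex viscosity solution.

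\emph{Anticipated difficulties.} The reductions to a strict extremum and the perturbation handling a degenerate Hessian are routine; the substance lies in the sliding-plane inclusions and the accompanying measure-theoretic bookkeeping — verifying that the contact point remains in $A_\delta$, justifying the $\omega$-weighted change of variables for the (a priori merely measurable) set $\partial u(A_\delta)$, and checking that the averaged integrals over $A_\delta$ converge to the pointwise values of their integrands at $x_0$ as $A_\delta\downarrow\{x_0\}$. This is precisely where the classical Aleksandrov/Guti\'errez argument for $\det D^2u=f$ must be adapted to accommodate the curvature weight $1/R(p)$.
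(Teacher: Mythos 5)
Your proposal is correct and follows essentially the same route as the paper's Appendix~A proof: both compare the subgradient image of $u$ with the gradient image of $\phi$ over a shrinking contact set near $x_0$ (your $A_\delta$ is the paper's $S_\epsilon$), invoke the generalised-solution identity together with the change of variables $p=\nabla\phi(x)$, and pass to the limit to obtain the pointwise inequality. The only differences are presentational: you prove the subgradient inclusion by hand via the sliding-plane argument where the paper cites~\cite[Lemma~1.4.1]{Gutierrez}, you write out the super-solution half that the paper declares ``similar,'' and you treat the degenerate-Hessian case by an explicit $\sigma$-perturbation.
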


\begin{proof}
We demonstrate here that $u$ is a viscosity sub-solution.  The proof that it is a super-solution, and therefore a viscosity solution, is similar.

Choose any $x_0\in\Omega$ and  $\phi\in C^2$ such that $u-\phi$ has a strict local maximum at $x_0$.  As in the proof of Lemma~\ref{lem:sub12}, this implies that there is some $\delta>0$ such that $u(x) < \phi(x)$ and $D^2\phi(x) \geq 0$ whenever $0 < \abs{x-x_0} \leq \delta$.  We define the positive constant
\[ m = \min\limits_{\delta/2 \leq \abs{x-x_0} \leq \delta}\{\phi(x)-u(x)\} > 0. \]

Now choose any $0 < \epsilon < m$ and define the set
\[ S_\epsilon = \{x \mid \abs{x-x_0} \leq \delta, \, u(x) + \epsilon > \phi(x)\}. \]
We notice that whenever $\delta/2 \leq \abs{x-x_0} \leq \delta$, we must have $\phi(x)-u(x) \geq m > \epsilon$ and $x\notin S_\epsilon$.  We conclude that $S_\epsilon$ is contained in the smaller ball
\[ S_\epsilon \subset B(x_0,\delta/2). \]

Now for any $z\in\partial S_\epsilon$, we can construct sequences $x_n\in S_\epsilon$, $y_n \notin S_\epsilon$ such that both converge to the boundary point $z$: $x_n, y_n \to z$.  By the continuity of $u$ and $\phi$, we must have
\[ u(x) + \epsilon = \phi(x), \quad x\in\partial S_\epsilon. \]

Since $u+\epsilon > \phi$ in $S_\epsilon$ with equality on the boundary $\partial S_\epsilon$, we can apply~\cite[Lemma~1.4.1]{Gutierrez} to obtain
\[ \partial u(S_\epsilon) = \partial(u+\epsilon)(S_\epsilon) \subset \partial\phi(S_\epsilon). \]
Combined with the fact that $u$ is a generalised solution, we obtain
\begin{align*}
\int_{S_\epsilon}\kappa(x)\,dx &= \int_{\partial u(S_\epsilon)}(1+\abs{p}^2)^{-(n+2)/2}\,dp \\
  &\leq \int_{\partial\phi(S_\epsilon)}(1+\abs{p}^2)^{-(n+2)/2}\,dp\\
	&= \int_{S_\epsilon} (1+\abs{\nabla\phi(x)}^2)^{-(n+2)/2}\det(D^2\phi(x))\,dx.
\end{align*}
As this holds for all sufficiently small $\epsilon>0$, we can use the continuity of $\phi$ to obtain
\[ -\det(D^2\phi(x_0)) + \kappa(x_0)(1+\abs{\nabla\phi(x_0)}^2)^{(n+2)/2}\,dx \leq 0, \]
which implies that $u$ is a viscosity sub-solution.
\end{proof}

\begin{lem}[Viscosity solutions are generalised solutions]\label{thm:viscToAleks}
Consider a domain $\Omega$ and curvature~$\kappa$ satisfying the conditions of Hypothesis~\ref{hyp}.  Let~$u$ be a convex viscosity solution of~\eqref{eq:curvature}.  Then $u$ is a generalised solution of~\eqref{eq:curvature}.
\end{lem}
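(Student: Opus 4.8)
The plan is to reduce the claim to a family of Dirichlet problems posed on balls compactly contained in $\Omega$, where classical solvability and Lemma~\ref{lem:aleksToVisc} can be brought to bear, and then to identify $u$ with the classical solution by comparison. First I would observe that both
\[ E \mapsto \int_{\partial u(E)}(1+\abs{p}^2)^{-(n+2)/2}\,dp \quad\text{and}\quad E\mapsto \int_E\kappa(x)\,dx \]
are finite Borel measures on $\Omega$ --- for the first this is the standard fact that the subgradient of a convex function generates a Borel measure, the continuous positive weight being irrelevant. Since two Borel measures that agree on all open balls coincide, it suffices to verify the identity of Definition~\ref{def:generalised} when $E$ is an arbitrary open ball $B$ with $\bar B\subset\Omega$.

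Fix such a $B$. Because $u$ is convex it is continuous, indeed locally Lipschitz, on $\bar B$, so its boundary trace $u|_{\partial B}$ lies in $C^0(\partial B)$; and $\int_B\kappa \le \int_\Omega\kappa < \int_{\R^n}(1+\abs{p}^2)^{-(n+2)/2}\,dp$ by~(H4), so the triple $(B,u|_{\partial B},\kappa)$ satisfies Hypothesis~\ref{hyp}. By Lemma~\ref{lem:exist} there is a generalised solution $w$ of~\eqref{eq:curvature} on $B$ taking the data $u|_{\partial B}$ in the weak sense. I would then argue that, because $B$ is a ball and the compatibility inequality is \emph{strict}, this weak solution in fact achieves the data continuously, i.e.\ $w\in C^0(\bar B)$ with $w=u$ on $\partial B$; by Lemma~\ref{lem:aleksToVisc}, $w$ is then a convex viscosity solution of~\eqref{eq:curvature} in $B$.

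Next I would compare $u$ and $w$ on $B$. Both are continuous on $\bar B$, are viscosity solutions of~\eqref{eq:curvature} in $B$, and coincide on $\partial B$. Applying the classical comparison principle (Theorem~\ref{thm:comparisonGauss}) on $B$, once with $u$ as sub-solution and $w$ as super-solution and once with the roles interchanged, yields $u\le w$ and $w\le u$ on $\bar B$, hence $u\equiv w$ on $B$. Since $w$ is a generalised solution on $B$, this gives $\int_{\partial u(E)}(1+\abs{p}^2)^{-(n+2)/2}\,dp = \int_E\kappa$ for every Borel $E\subset B$, in particular for $E=B$; by the reduction of the first paragraph, $u$ is a generalised solution of~\eqref{eq:curvature} on all of $\Omega$.

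The step I expect to require the most care is the boundary behaviour of $w$ on the ball: showing that the weak solution furnished by Lemma~\ref{lem:exist} actually attains the continuous data $u|_{\partial B}$ continuously up to $\partial B$. Unlike the constant-curvature examples of Section~\ref{sec:bc}, here the total curvature on $B$ lies strictly below the critical value $\int_{\R^n}(1+\abs{p}^2)^{-(n+2)/2}\,dp$, so $w$ has bounded subgradient near $\partial B$, and a standard barrier construction on the ball (where an exterior sphere condition holds automatically) gives the desired continuity; this is essentially the content of the classical existence theory for the Gaussian curvature Dirichlet problem. Apart from this point, the argument consists only of the measure-theoretic reduction to balls together with direct applications of Lemma~\ref{lem:aleksToVisc} and Theorem~\ref{thm:comparisonGauss}.
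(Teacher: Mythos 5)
Your plan is structurally different from the paper's: you try to solve the Dirichlet problem on small balls with the \emph{original} curvature $\kappa$ and the exact trace $u|_{\partial B}$, then identify $u$ with that solution by two-sided comparison. The paper instead does a two-sided construction with \emph{modified} curvatures: for the lower bound on the curvature measure it replaces $\kappa$ by a $\kappa_\epsilon$ that vanishes near $\partial E$ (and only then invokes a continuous-attainment theorem of Bakelman), and for the upper bound it works with a weak Dirichlet solution that may \emph{not} attain the data, feeding the boundary dichotomy (either $w_*(x)=u(x)$ or $\partial w_*(x)=\emptyset$) into Lemma~\ref{lem:subgradOrder} and then letting $\epsilon\to0$. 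Your proposal avoids Lemma~\ref{lem:subgradOrder} entirely, which is a hint that something structural is being skipped.

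The skipped step is exactly the one you flag as delicate, and I don't think it can be waved away: the claim that the generalised Dirichlet solution $w$ on a ball $B$ with data $u|_{\partial B}$ and curvature $\kappa$ attains the data continuously. Strict sub-criticality $\int_B\kappa<\int_{\R^n}(1+|p|^2)^{-(n+2)/2}\,dp$ only says that $\partial w(B)$ does not exhaust all of $\R^n$ up to measure; it does \emph{not} give a uniform bound on the subgradient near $\partial B$, nor does the exterior-sphere condition supply a barrier here, because the right-hand side $\kappa(x)(1+|\nabla u|^2)^{(n+2)/2}$ grows with the gradient and defeats the usual paraboloid/cap barrier unless $\kappa$ is small near the boundary. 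The known continuous-attainment results for this equation (Trudinger--Urbas, Bakelman) require extra structure --- typically $g\in C^{1,1}(\bar{\Omega})$ or curvature decaying at $\partial\Omega$ --- and $u|_{\partial B}$ is only Lipschitz here. Without this step your comparison argument only delivers $w\le u$ on $\bar B$ (via $w\le u$ on $\partial B$ and Theorem~\ref{thm:comparisonGauss}), not $u\le w$; the reverse inequality is circular unless you already know $u$ is a generalised solution. To close the gap along the lines of the paper, you would need either to replace $\kappa$ by a version vanishing near $\partial B$ to get the lower measure bound, and separately use the weak-Dirichlet boundary dichotomy together with Lemma~\ref{lem:subgradOrder} to get the upper bound --- precisely the two ingredients the paper's proof combines.
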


\begin{proof}
Choose any uniformly convex set $E$ such that $\bar{E} \subset \Omega$.  Since $u$ is convex, $u \in C^{0,1}(\bar{E})$. We will verify that 
\[ \int_{\partial u(E)}(1+\abs{p}^2)^{-(n+2)/2}\,dp = \int_E\kappa(x)\,dx. \]

{\bf Super-solution.}
First we construct a super-solution of $u$.  For $\epsilon>0$, define the smaller set
\[ E_\epsilon = \{x\in E \mid \text{dist}(x,\partial E) > \epsilon\}. \]
Next we define the modified curvature function
\[ \kappa_\epsilon(x) = \frac{1}{\epsilon}\kappa\left(x\right)\min\left\{\text{dist}(x,\partial E),\epsilon\right\} \leq \kappa(x). \]
We note that $\kappa = \kappa_\epsilon$ in $E_\epsilon$.

Now we let $v_\epsilon$ be the convex generalised solution of
\bq\label{eq:PDESuper}
\begin{cases}
-\det(D^2v_\epsilon(x)) + \kappa_\epsilon(x)(1+\abs{\nabla v_\epsilon(x)}^2)^{(n+2)/2}, & x\in E\\
v_\epsilon(x) = u(x), & x\in\partial E.
\end{cases}
\eq
By~\cite[Theorem 11.8]{Bakelman_Elliptic}, the solution continuously attains the boundary data: $v_\epsilon\in C^0(\bar{E})$ and $v_\epsilon(x) = u(x)$ on $\partial E$.
Additionally, by Lemma~\ref{lem:aleksToVisc}, $v_\epsilon$ is a viscosity solution (and thus super-solution) of~\eqref{eq:PDESuper}.  

Then we can consider any $x_0\in E$ and $\phi\in C^2$ such that $D^2v(x_0) > 0$ and $v_\epsilon-\phi$ has a local minimum at $x_0$.  Since $v_\epsilon$ is a super-solution of~\eqref{eq:PDESuper}, we must have
\begin{align*}
0 &\leq -\det(D^2\phi(x_0)) + \kappa_\epsilon(x_0)(1+\abs{\nabla\phi(x_0)}^2)^{(n+2)/2}\\ &\leq -\det(D^2\phi(x_0)) + \kappa(x_0)(1+\abs{\nabla\phi(x_0)}^2)^{(n+2)/2} .
\end{align*}
Thus $v_\epsilon$ is also a super-solution of the original equation~\eqref{eq:MA}.

Since $v_\epsilon$ is a super-solution, $u$ a sub-solution, and $v_\epsilon = u$ on $\partial E$, we can use the classical comparison principle (Theorem~\ref{thm:comparisonGauss}, which is a special case of~\cite[Theorem V.2]{IshiiLions}) to deduce that
\[ v_\epsilon(x) \geq u(x), \quad x\in\bar{E}. \]
This allows us to order the subgradients of these functions using~\cite[Lemma~1.4.1]{Gutierrez}:
\[ \partial v_\epsilon(E) \subset \partial u(E). \]

Now we can compute
\begin{align*}
\int_{\partial u(E)} (1+\abs{p}^2)^{-(n+2)/2}\,dp & \geq \int_{\partial v_\epsilon(E)}(1+\abs{p}^2)^{-(n+2)/2}\,dp\\
  &= \int_E \kappa_\epsilon(x)\,dx\\
	&= \int_E \kappa(x)\,dx + \int_{E\backslash E_\epsilon}(\kappa_\epsilon(x)-\kappa(x))\,dx\\
	&\geq \int_E \kappa(x)\,dx - 2\abs{E\backslash E_\epsilon}\sup\limits_{E}\kappa.
\end{align*}
Since $\kappa$ is bounded, we can take $\epsilon\to0$ to obtain
\bq\label{eq:superProof} \int_{\partial u(E)} (1+\abs{p}^2)^{-(n+2)/2}\,dp \geq \int_E \kappa(x)\,dx.\eq

{\bf Sub-solution.} Secondly, we construct a sub-solution by letting $w$ be the convex generalised solution of
\bq\label{eq:PDESub}
\begin{cases}
-\det(D^2w(x)) + \kappa_\epsilon(x)(1+\abs{\nabla w(x)}^2)^{(n+2)/2}, & x\in E\\
w(x) = u(x), & x\in\partial E.
\end{cases}
\eq
From Lemma~\ref{lem:aleksToVisc}, $w$ is a viscosity solution (and therefore sub-solution) of~\eqref{eq:PDESub} in $E$.
Since $w$ satisfies the Dirichlet boundary condition in the sense of Definition~\ref{def:weakDirichlet}, we must have $w \leq u$ on $\partial E$.  We can again apply the classical comparison principle to conclude that $w \leq u$ on $\bar{E}$.

Since $w$ satisfies the boundary condition in the generalised sense, we have that at each point $x\in\partial E$ either $w_*(x) = u(x)$ or the subgradient $\partial w_*(x)$ is empty; see Lemma~3.6 and Remark~(iii) of~\cite{Urbas_MA}.  Thus we can use Lemma~\ref{lem:subgradOrder} to order the subgradients:
\[ \partial u(E) \subset \partial w(E). \]
From here, we can easily compute
\bq\label{eq:subProof}
\int_{\partial u(E)}(1+\abs{p}^2)^{-(n+2)/2}\,dp \leq \int_{\partial w(E)}(1+\abs{p}^2)^{-(n+2)/2}\,dp = \int_E \kappa(x)\,dx.
\eq

Combining Equations~\eqref{eq:superProof} and~\eqref{eq:subProof}, we obtain
\[ \int_{\partial u(E)}(1+\abs{p}^2)^{-(n+2)/2}\,dp = \int_E \kappa(x)\,dx \]
for any uniformly convex set $E\subset\Omega$.

If $E\subset\Omega$ is non-uniformly convex, we can find a sequence of uniformly convex sets $E_j\subset E$ such that $\abs{E \backslash E_j} \to 0$.  Since $\kappa$ is bounded, we obtain
\[ \int_{\partial u(E)}(1+\abs{p}^2)^{-(n+2)/2}\,dp \geq \int_{E_j} \kappa(x)\,dx \to \int_E\kappa(x)\,dx.\]
The reverse inequality is proved similarily.

Finally, we can choose a generic $F\subset\Omega$.  For $\epsilon>0$, there exist non-overlapping convex sets $F_j$ such that $\bigcup\limits_j F_j \subset F$ and $\abs{F\backslash \bigcup\limits_j F_j} < \epsilon$.  Then the measure satisfies
\[ \int_{\partial u(F)}(1+\abs{p}^2)^{-(n+2)/2}\,dp \geq \sum\limits_j\int_{F_j}\kappa(x)\,dx \geq \int_F \kappa(x)\,dx - \epsilon\sup\limits_\Omega \kappa. \]
The reverse inequality is similar.  Then taking $\epsilon\to0$, we obtain
\[ \int_{\partial u(F)}(1+\abs{p}^2)^{-(n+2)/2}\,dp = \int_F \kappa(x)\,dx \]
and $u$ is a generalised solution.
\end{proof}

\section{Perron's Method}\label{app:perron}

In this appendix, we expand on the proof of Perron's method (Theorem~\ref{thm:perron}), expanding on the results of~\cite[Theorem~4.1]{BardiMannucci}.

\noindent {\bf Theorem 4} (Perron construction of viscosity solution). \emph{
Assume that $\Omega$, $g$, and $\kappa$ satisfy Hypothesis~\ref{hyp}.  If $u_1$ is an upper semi-continuous sub-solution and $u_2$ a lower semi-continuous super-solution with $u_1 \leq u_2$ on $\bar{\Omega}$ then
\[ w(x) = \sup\{W:\Omega\to\R \mid u_1 \leq W \leq u_2, \, W^* \text{ is a sub-solution}\} \]
is a viscosity solution of~\eqref{eq:MA}.}

\begin{proof}[Proof of Theorem~\ref{thm:perron}]
We notice that any admissible function $W$ appearing in the above definition is defined only in the interior $\Omega$.  Since $W^*$ is a sub-solution, it is convex (Lemma~\ref{lem:subConvex}), and therefore $W$ is continuous in $\Omega$ and satisfies $W^* = (W_*)^*$ in $\bar{\Omega}$.  By Lemma~\ref{lem:subBC}, $W^*$ is a sub-solution in the conventional sense (i.e. $W^* \leq g$ on $\partial\Omega$).  Thus we can use standard arguments to show that $w^*$ is a sub-solution and $w^*\leq g$ on $\partial\Omega$.  From Lemma~\ref{lem:subConvex}, $w^*$ is convex and $w^*=w$ in $\Omega$.

It remains to show that $w_*$ is a super-solution.  This is a standard argument for a classical Dirichlet problem, which requires super-solutions to satisfy $w^* \geq g$ on $\partial\Omega$, but is non-standard in our setting because we allow super-solutions to lie below the given Dirichlet data.  We verify the super-solution condition at every $x_0\in\bar{\Omega}$ and consider three possibilities.

{\bf Case 1}: $w_*(x_0) = u_2(x_0)$.  Choose any $\phi\in C^2$ such that $D^2\phi(x_0)>0$ and $w_*-\phi$ has a local minimum at $x_0$.  In this case
\[ u_2(x)-\phi(x) \geq w_*(x)-\phi(x) \geq w_*(x_0)-\phi(x_0) = u_2(x_0)-\phi(x_0)\]
and $u_2-\phi$ also has a local minimum at $x_0$.  Then since $u_2$ is a super-solution,
\[ F^*(x_0,w_*(x_0),\nabla\phi(x_0),D^2\phi(x_0)) \geq 0 \]
as required.

{\bf Case 2}: Interior points $x_0\in\Omega$ with $w_*(x_0) < u_2(x_0)$.  Suppose the super-solution condition is violated.  Then there exists some $\phi\in C^2$ such that $D^2\phi(x_0)>0$, $\phi(x_0)=w_*(x_0)$, $\phi(x)\leq w_*(x)$ nearby, and 
\[ -{\det}^+(D^2\phi(x_0)) + \kappa(x_0)R(\nabla\phi(x_0)) < 0. \]
We will derive a contradiction by constructing a sub-solution $v^*$ lying between $u_1$ and $u_2$ such that $v(x_0)>w(x_0)$, which contradicts the maximality of $w$. 

For sufficiently small $r,\epsilon,\gamma>0$ define
\[ v(x) = \begin{cases}
\max\{\phi(x) + \epsilon-\frac{\gamma}{2}\abs{x-x_0}^2,w(x)\} & \abs{x-x_0} < r\\
w(x), & \abs{x-x_0} \geq r.
\end{cases}\]
As long as the parameters are small enough, $v$ will satisfy the following conditions.
\begin{itemize}
\item $v \geq w \geq u_1$.  This follows trivially from the definition of $v$.
\item $v \leq u_2$. 
Let $\delta = u_2(x_0)-\phi(x_0) = u_2(x_0)-w_*(x_0)>0$.  Since $u_2$ is lower semi-continuous and $\phi$ is smooth, taking $r$ sufficiently small ensures that
\[ u_2(x) \geq u_2(x_0)-\frac{\delta}{3}, \quad \phi(x) \leq \phi(x_0) + \frac{\delta}{3} \]
whenever $\abs{x-x_0} < r$.  This in turn ensures that for sufficiently small $r,\epsilon$,
\[ u_2(x) \geq \phi(x) + \frac{\delta}{3} \geq  \phi(x) + \epsilon-\frac{\gamma}{2}\abs{x-x_0}^2 \]
and therefore $u_2(x)\geq v(x)$.
\item $-{\det}^+(D^2\phi(x)-\gamma I) + \kappa(x)R(\nabla\phi(x)-\gamma x) < 0$ for $\abs{x-x_0} < r$.  This is guaranteed for small enough $\gamma$ and $r$ since $\phi\in C^2$ and 
\[ -{\det}^+(D^2\phi(x_0)) + \kappa(x_0)R(\nabla\phi(x_0)) < 0. \]
\item $v$ is convex.  Since $\phi$ is smooth and uniformly convex in a neighbourhood of $x_0$, $v$ is the maximum of two convex functions and is therefore convex.
\end{itemize}

Now we verify that $v^*$ is a sub-solution.  As long as $r$ is sufficiently small, it is only necessary to check the conditions in $\Omega$, where $v^*=v$ and $w^*=w$.  Choose any $z\in{\Omega}$ and $\psi\in C^2$  such that $v-\psi$ has local maximum at $z$.  There are two possibilities depending on which function is active in the definition of $v$.

\emph{Case 2a}: $v(z) = w(z)$.  In this case
\[ w(z)-\psi(z) = v(z)-\psi(z) \geq v(x)-\psi(x) \geq w(x)-\psi(x).   \]
Since $w-\psi$ has a maximum at $z$ and $w$ is a sub-solution, the required condition is satisfied.

\emph{Case 2b}: $v(z) > w(z)$.  This means that nearby, $v(x) = \phi(x) + \epsilon-\frac{\gamma}{2}\abs{x}^2 \in C^2$.  Furthermore, this is only possible for $\abs{z-x_0} < r$.  Then since $v-\psi$ has a maximum at $z$,
\[ \nabla \phi(z) - \gamma x = \nabla\psi(z), \quad D^2\phi(z) - \gamma I \leq D^2\psi(z). \]
Consequently,
\begin{align*} F(z,v(z),\nabla\psi(z),D^2\psi(z)) &\leq -{\det}^+(D^2\psi(z)) + \kappa(z)R(\nabla\psi(z)) \\
   & \leq -{\det}^+(D^2\phi(z)-\gamma I) + \kappa(z)R(\nabla\phi(z)-\gamma z)\\
	 & < 0.
\end{align*}

This demonstrates that $v$ is a sub-solution satisfying $u_1 \leq v \leq u_2$ and $v(x_0) > w(x_0)$, which contradicts the maximality of $w$.  Therefore the super-solution condition must be satisfied at interior points.

{\bf Case 3}: Boundary points $x_0\in\partial\Omega$ with $w_*(x_0) < u_2(x_0)$.  We suppose again that $w_*$ violates the super-solution condition at $x_0$, which is only possible if $w_*(x_0) < g(x_0)$ (Lemma~\ref{lem:superBC}).    Then we can perform the same construction as in the previous case to generate a new function $v$:
\[ v(x) = \begin{cases}
\max\{\phi(x) + \epsilon-\frac{\gamma}{2}\abs{x-x_0}^2,w_*(x)\}, & \abs{x-x_0} < r\\
w_*(x), & \abs{x-x_0} \geq r.
\end{cases}\]
As above, $v^*$ is a sub-solution  in $\Omega$.  

Now we consider $z\in\partial\Omega$.
Recall that $\phi(x_0) = w_*(x_0)<g(x_0)$.  Since $\phi$ and $g$ are continuous, sufficiently small $\epsilon$ and $r$ ensure that
\[ \phi(z) + \epsilon-\frac{\gamma}{2}\abs{z-x_0}^2 < g(z), \quad \abs{z-x_0}<r. \]
Thus 
\[ v^*(z) \leq \max\left\{\phi(z) + \epsilon-\frac{\gamma}{2}\abs{z-x_0}^2,w^*(z)\right\} \leq g(z) \]
 and $v^*$ is a sub-solution.

Finally, we observe that for sufficiently small $\abs{x-x_0}$, $v(x) \geq w_*(x_0)+\epsilon/2$ and therefore $v_*(x_0)>w_*(x_0)$.  This, in turn, requires that $v(x)>w(x)$ for some $x\in\Omega$, which contradicts the maximality of~$w$.

We conclude that the function $w_*$ generated by the Perron construction must a super-solution and therefore $w$ is a viscosity solution of~\eqref{eq:MA}.
\end{proof}

}

\end{document}